\numberwithin{equation}{section}
\newtheorem{thm}{Theorem}
 \numberwithin{thm}{section}
\newtheorem{prop}[thm]{Proposition}
\newtheorem{defi}[thm]{Definition}
\newtheorem{ej}[thm]{Example}
\newtheorem{example}[thm]{Example}
\newtheorem{lemma}[thm]{Lemma}
\newtheorem{remark}[thm]{Remark}
\newcommand{\R}{\mathbb{R}}
\newcommand{\Q}{\mathbb{Q}}
\newcommand{\Z}{\mathbb{Z}}
\newcommand{\mA}{\mathcal{A}}
\newcommand{\intal}{\mathrm{Int}}
\DeclareMathOperator{\minor}{minor}
\newcommand{\Sp}{\mathscr{S}}
\newcommand{\uri}{\rightarrow_\circ}
\newcommand{\arrowschem}[2]{\raisebox{-2ex}%
	{$\stackrel{\stackrel{\displaystyle#1}{\longrightarrow}}%
	{\stackrel{\longleftarrow}{#2}}$}}
\begin{document}

\title[Lower bounds and regions of multistationarity]{Lower bounds 
for positive roots and regions of multistationarity in chemical reaction networks}

\author{Fr\'ed\'eric Bihan}
\address{Laboratoire de Math\'ematiques\\
         Universit\'e Savoie Mont Blanc\\
         73376 Le Bourget-du-Lac Cedex\\
         France}
\email{Frederic.Bihan@univ-savoie.fr}
\urladdr{http://www.lama.univ-savoie.fr/~bihan/}

\author{Alicia Dickenstein}
\address{Dto.\ de Matem\'atica, FCEN, Universidad de Buenos Aires, and IMAS (UBA-CONICET), Ciudad Universitaria, Pab.\ I, 
C1428EGA Buenos Aires, Argentina}
\email{alidick@dm.uba.ar}
\urladdr{http://mate.dm.uba.ar/~alidick}

\author{Magal\'{\i} Giaroli}
\address{Dto.\ de Matem\'atica, FCEN, Universidad de Buenos Aires, and IMAS (UBA-CONICET), Ciudad Universitaria, Pab.\ I, 
C1428EGA Buenos Aires, Argentina}
\email{mgiaroli@dm.uba.ar}


\date{}

\begin{abstract}
Given a real sparse polynomial system, we present a general framework to find explicit coefficients 
for which the system has more than one positive solution, based on the recent 
article by Bihan, Santos and Spaenlehauer \cite{bihan}. We apply this approach
to find explicit {\it reaction rate constants} and {\it total conservation constants} 
in biochemical reaction networks for which the associated dynamical system is multistationary.
\end{abstract}
\maketitle

\section{Introduction}

Multistationarity is a key property of biochemical reaction networks, 
because it provides a mechanism for switching between different response states. 
This enables multiple outcomes for cellular-decision making in cell signaling systems.  
Questions about steady states in biochemical reaction networks under \emph{mass-action kinetics} 
are fundamentally questions about nonnegative real solutions to parametrized polynomial ideals. 
In particular, multistationarity corresponds to the existence of more than one positive steady state 
with fixed conserved quantities.  We refer the reader to~\cite{alicia} for an expansion 
of this point of view and further references.

 In this work,  we develop tools from real algebraic geometry based on the paper~\cite{bihan} by Bihan, Santos and Spaenlehauer,
 to analyze systems biology models.  We present a general framework to describe \emph{multistationarity regions} in parameter 
space, that is, to find ``explicit'' parameters for which 
 multistationarity occurs. We exemplify our theoretical results in different biochemical networks of interest of arbitrary
 size and number of variables.  For this, we need to adapt the theoretical results to make them amenable to effective computations
in a variety of specific instances in the modeling of biochemical systems. Our developments are also based on
 the existence of explicit parametrizations of the corresponding steady state varieties, as described in
 Theorems~28 and 35 in~\cite{aliciaMer}.
 
 We give two complementary approaches. On one side, we show how to deform a given 
 choice of reaction rate constants and total concentration constants in order to produce multistationarity. 
 On the other side, we describe open multistationarity regions in the space of all these constants.
 We derive inequalities in the reaction constants  and in the total conservation constants
 whose validity implies the presence of multistationarity. In
 particular,  our results are the key tools in our companion paper~\cite{cascades} to identify multistationarity regions for 
enzymatic cascades 
 of Goldbeter--Koshland loops~\cite{GK81} of arbitrary number of layers,
when a same phosphatase catalyzes the transfer of phosphate groups at two different layers.

\subsection{Basics on chemical reaction networks and multistationarity} 

Given  a  set  of
$s$ chemical  species, a \textit{chemical reaction network} 
on this set of species is a finite directed graph whose vertices are labeled by complexes 
and whose edges $\mathcal{R}$ represent the reactions and are labeled by parameters $\kappa\in \R^{|\mathcal{R}|}_{>0}$, 
which are called \textit{reaction rate constants}. 
Complexes determine vectors in $\Z^s_{\geq 0}$ according to the stoichiometry of the species they consist of.
We identify a complex with its corresponding vector and also with the formal linear combination of
species specified by its coordinates.
 Under mass-action kinetics, the chemical reaction network defines the following autonomous system of ordinary differential equations
in the concentrations $x_1, x_2, \dots, x_s$ of the species as \emph{functions of time $t$}:
 
 \begin{equation}\label{fam} \dot{x} = \left(\frac{dx_1}{dt},\frac{dx_2}{dt},\dots,\frac{dx_s}{dt}\right) = f(x) =
\sum_{y\rightarrow{y'}\in\mathcal{R}}\kappa_{yy'} x^{y}(y'-y),
 \end{equation}
 where $x=(x_1,x_2,\dots, x_s)$, $x^y = x_1^{y_1}x_2^{y_2}\dots x_s^{y_s}$ and $y\rightarrow{y'}$ indicates that 
$(y,y')\in\mathcal{R}$; that is, the complex $y$ reacts to the complex $y'$. 
The right-hand side of each differential equation $\frac{dx_i}{dt}$ is a polynomial $f_i$ in $x_1,x_2,\dots,x_s$ with real 
 coefficients.
 A concentration vector $\bar x\in \R^s_{\geq 0}$ is a \emph{steady state} of the system if $f(\bar x)=0$, and $\bar x$ 
 is a \emph{positive} steady state if moreover $\bar x\in \R^s_{>0}$.
 We observe that the vector $\dot{x}(t)$ lies for all time $t$ 
in the so called \emph{stoichiometric subspace}  $S$  which is the linear subspace spanned by 
the reaction vectors $\{y'-y :  y\rightarrow{y'}\in \mathcal{R}\}$. Thus, a trajectory $x(t)$ 
beginning at a positive vector $x(0)=x^0\in\R^s_{> 0}$ remains in the \emph{stoichiometric 
 compatibility class} $(x^0+S)\cap\R^s_{\geqslant 0}$ for all $t\geq 0$. The linear equations of $x^0+S$ are 
 called the \emph{conservation laws}. Given a linear function $\ell$ vanishing on $S$ and any fixed 
 stoichiometric compatibility class, $\ell$ takes a constant value over all points in this
 class. We will refer to these constant values as {\it total conservation constants}.
  
 We say that the network \emph{has the capacity for multistationarity} 
 if there exists a choice of reaction rate constants $\kappa$ such that there are two or more steady states in one 
 stoichiometric compatibility class for some initial state $x^0$, 
that is, for an appropriate choice of total conservation constants.   
 Starting with~\cite{cfI, cfII}, several articles studied 
 the capacity for multistationarity from the structure of the digraph
 \cite{banajipantea,feliu, fw13, fc, anne,mueller, aliciaMer}.  
 Once the capacity for multistationarity is determined, the following difficult 
 question is to find values of  multistationary parameters as exhaustively and explicitly as possible. 
 This problem is in principle effectively computable but the inherent high complexity does not allow 
 to treat interesting networks with standard general tools. Several articles addressed this task, 
 providing different answers based on ad-hoc computations,  injectivity results based on signs of minors, 
 and degree theory \cite{feliumincheva,cfr,mincheva,gatermann,kfc,kothamachu, sontag}.

\subsection{Our results for a two-component system}
We showcase our results in a simple meaningful example.
The following chemical reaction network is a \textit{two-component 
system}~\cite{twocomponents} with \textit{hybrid} histidine kinase (hybrid $HK$) whose multistationarity 
 was studied in~\cite{feliumincheva,kothamachu}.  Two-component signal transduction systems enable bacteria to sense, respond, and 
adapt to a wide 
 range of environments, stressors, and growth conditions.  
 This network has six species $X_1, \dots,X_6$,  ten complexes (e.g. $X_1$ or $X_1+X_6$, 
 also identified with the vectors $e_1$ and $e_1+e_6$ in $\Z_{\ge 0}^ 6$) and  six reactions (directed edges),
with labels given by positive reaction rate constants $k_1, \dots, k_6$:
  \begin{align}\label{networkHHK}
  X_1 \xrightarrow{k_1} X_2 &\xrightarrow{k_2} X_3  \xrightarrow{k_3} X_4 \nonumber\\
  X_3 + X_5 &\xrightarrow{k_4} X_1 + X_6  \\
  X_4 + X_5 &\xrightarrow{k_5} X_2 + X_6 \nonumber \\
  X_6 &\xrightarrow{k_6} X_5 \nonumber
  \end{align}
    This labeled digraph represents the following biological mechanism.
 Two component signaling relies on 
 phosphotransfer reactions between histidine and aspartate residues on histidine kinases ($HKs$) and response regulator ($RR$) proteins.  
 The hybrid $HK$ consists of two phosphorylable domains. We denote the phosphorylation 
 state of each site by $p$ if the site is phosphorylated and $0$ if it is not; 
 the four possible states of $HK$ are denoted by $HK_{00}$, $HK_{p0}$, $HK_{0p}$, and $HK_{pp}$. 
 We let $RR$ be the unphosphorylated response regulator protein, and $RR_p$ the phosphorylated form. 
 Upon receiving a signal, the $HK$ can auto-phosphorylate. 
 Whenever the second phosphorylation site is occupied, the phosphate group can be transferred to $RR$.   
 In \eqref{networkHHK}, we displayed the corresponding network of reactions
denoting by $X_1, \dots, X_6$ the chemical species $HK_{00}, HK_{p0}, HK_{0p}, HK_{pp}, RR, RR_p$, respectively. 
 
  In what follows, we denote the concentration of the chemical species $X_1,\dots, X_6$  by lower-case letters $x_1,\dots,x_6$. 
These  concentrations are assumed to be functions which evolve in time $t$, 
according to the following polynomial autonomous dynamical system:
   \begin{align*}
   \frac{dx_1}{dt} &= f_1(x) = {-k_1}x_1 + k_4x_3x_5, & \frac{dx_2}{dt} &= f_2(x) = k_1x_1 - k_2x_2 + k_5x_4x_5, \\  
   \frac{dx_3}{dt} &= f_3(x) = k_2x_2 -k_3x_3 -k_4x_3x_5, & \frac{dx_4}{dt} &= f_4(x)= k_3x_3 -k_5x_4x_5,\\
   \frac{dx_5}{dt} &= f_5(x) = -k_4x_3x_5 - k_5x_4x_5 + k_6x_6, & \frac{dx_6}{dt} &= f_6(x) = k_4x_3x_5 + k_5x_4x_5  - k_6x_6.
   \end{align*}  
 It is straightforward to check that there are two linearly independent relations: $f_1+f_2+f_3+f_4=f_5+f_6=0$, 
which imply the existence of two constants $T_1, T_2$ such that for any value of 
  $t$:
  \begin{align}\label{eq:conservationlawsnetworkHHK}
  \ell_1(x)= x_1 + x_2 + x_3 + x_4 = &\  T_1, \\
  \ell_2(x) = x_5 + x_6 = &\  T_2.\nonumber
  \end{align}
   We assume that the linear variety cut out by these equations intersects the positive orthant, so $T_1, T_2$ are also positive 
   parameters. These parameters $T_1$, $T_2$ are the total conservation constants and 
the linear equations $\ell_1$ and $\ell_2$ are the conservation laws.

 We now explain our strategy in the previous network (\ref{networkHHK}). 
Our problem is to determine values of 
$(k_1, \dots,k_6,T_1,T_2)$ in $\R_{>0}^8$ for which the polynomial system
 \[ f_1(x) = \dots = f_6(x)= \ell_1(x) - T_1= \ell_2(x)-T_2=0,\]
 has {\it more than one positive solution}  $x \in \R^6_{>0}$. 
 We have, using the framework of the main Theorems~\ref{th:BS} and~\ref{thm:main2}:
 
 \begin{thm}\label{th:examplehhk}
 With the notation of (\ref{networkHHK}) and (\ref{eq:conservationlawsnetworkHHK}), 
 assume that a fixed choice of reaction rate constants 
 satisfies the condition $k_3 > k_1$. Then,  $ k_6\left(\dfrac{1}{k_2}+\dfrac{1}{k_3}\right) < k_6\left(\dfrac{1}{k_1}+\dfrac{1}{k_2}\right)$ and for any
 choice of total concentration constants veriying the inequalities
 \begin{equation} \label{eq:conditionhhk}
 \begin{aligned}
 k_6\left(\dfrac{1}{k_2}+\dfrac{1}{k_3}\right) < & \, \, \frac{T_1}{T_2} \, < k_6\left(\dfrac{1}{k_1}+\dfrac{1}{k_2}\right),
 \end{aligned}
 \end{equation}
 there exist positive constants $N_1, N_2$ such that for any values of $\beta_4$ and $\beta_5$ satisfying $\beta_4 > N_1$ and 
  $\frac{\beta_5}{\beta_4} > N_2$, the system has 
 at least three positive steady states after modifying only the 
 parameters $k_4, k_5$  via the rescaling $\overline{k_4} = \beta_4 \, k_4,\, \overline{k_5} = \beta_5 \, k_5$.

 \end{thm}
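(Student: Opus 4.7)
The plan is to reduce the six-variable steady state problem to a bivariate polynomial system via an explicit rational parametrization of the positive part of the steady state variety, and then to apply the Bihan--Santos--Spaenlehauer framework (Theorems~\ref{th:BS} and~\ref{thm:main2}) to convert a favorable coefficient sign pattern into three positive solutions. Using $f_1 = f_2 = f_4 = f_6 = 0$ I would solve for four of the six species as rational functions of $x_3$ and $x_5$, obtaining
\[
x_1 = \tfrac{k_4}{k_1}x_3 x_5,\quad x_4 = \tfrac{k_3}{k_5}\tfrac{x_3}{x_5},\quad x_2 = \tfrac{(k_3+k_4 x_5)x_3}{k_2},\quad x_6 = \tfrac{(k_3+k_4 x_5)x_3}{k_6}.
\]
This is the concrete instance, for the network \eqref{networkHHK}, of the parametrization supplied by Theorems~28 and~35 in~\cite{aliciaMer}. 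Substituting into $\ell_1=T_1,\ \ell_2=T_2$ and clearing denominators yields two polynomials $h_1(x_3,x_5),h_2(x_3,x_5)$ whose positive zeros are in bijection with the positive steady states in the prescribed stoichiometric class.

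Since $h_2$ is linear in $x_3$, I would eliminate $x_3 = k_6(T_2-x_5)/(k_3+\beta_4 k_4 x_5)$ after the rescaling $k_4\mapsto\beta_4 k_4,\ k_5\mapsto\beta_5 k_5$, producing a univariate cubic $P(x_5) = Ax_5^3+Bx_5^2+Cx_5+D$ whose positive roots in $(0,T_2)$ correspond bijectively to the positive steady states, since $h_2=0$ with $x_3,x_5>0$ already forces $x_5<T_2$. A direct computation gives $A = -k_6(k_1+k_2)k_4 k_5\beta_4\beta_5 < 0$, $D = k_6 T_2 k_1 k_2 k_3 > 0$, and
\[
C = k_1 k_2 k_3\Bigl[\beta_5 k_5\bigl(k_6 T_2(\tfrac{1}{k_2}+\tfrac{1}{k_3}) - T_1\bigr) - k_6\Bigr],
\]
\[
B = k_1\beta_5 k_5\Bigl[k_2\beta_4 k_4\bigl(k_6 T_2(\tfrac{1}{k_1}+\tfrac{1}{k_2}) - T_1\bigr) - k_6(k_2+k_3)\Bigr].
\]
The hypothesis $T_1/T_2 > k_6(\tfrac{1}{k_2}+\tfrac{1}{k_3})$ makes the inner bracket of $C$ strictly negative, hence $C<0$ for every $\beta_5>0$. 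The hypothesis $T_1/T_2 < k_6(\tfrac{1}{k_1}+\tfrac{1}{k_2})$ makes the coefficient of $\beta_4 k_4$ in $B$ strictly positive, hence $B>0$ as soon as $\beta_4$ exceeds the explicit threshold $N_1 := k_6(k_2+k_3)/[k_2 k_4(k_6 T_2(\tfrac{1}{k_1}+\tfrac{1}{k_2}) - T_1)]$. Non-emptiness of the interval prescribed for $T_1/T_2$ is equivalent to $\tfrac{1}{k_3}<\tfrac{1}{k_1}$, namely $k_3>k_1$, which is the first standing hypothesis.

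With the sign pattern $(\mathrm{sign}(A),\mathrm{sign}(B),\mathrm{sign}(C),\mathrm{sign}(D)) = (-,+,-,+)$ established, I would then apply Theorem~\ref{thm:main2} to the rescaled system $(h_1,h_2)$: its supports, equipped with the rescaling in $\beta_4,\beta_5$, fit the positive-circuit hypothesis underlying Theorem~\ref{th:BS}, and the quantitative part of Theorem~\ref{thm:main2} furnishes the second threshold $N_2$ on $\beta_5/\beta_4$ under which the three sign changes of $P$ are attained by three distinct positive real roots. These automatically lie in $(0,T_2)$, giving three positive steady states of the original network.

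The principal obstacle is precisely this quantitative step. Descartes' rule alone only bounds the number of positive roots of $P$ by three and does not supply either attainment of that bound or the separation of the intermediate roots as $\beta_4,\beta_5$ vary, and one would also have to independently verify that no root escapes beyond $T_2$. The perturbation argument in~\cite{bihan} is tailored to exactly this situation: by viewing $(h_1,h_2)$ as a controlled deformation of a system supported on a simplex and reading off the signs on the extra exponent via the positive-circuit criterion, it delivers the explicit thresholds $N_1$ and $N_2$ together with the existence of three positive solutions in a single stroke, bypassing the discriminant estimates that a purely univariate approach would require.
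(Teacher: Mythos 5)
Your reduction is correct and, up to a change of parametrization, parallel to the paper's: the paper solves $f_2=f_3=f_4=f_5=0$ for $x_1,x_2,x_3,x_6$ in terms of $(x_4,x_5)$, while you solve $f_1=f_2=f_4=f_6=0$ in terms of $(x_3,x_5)$; both substitutions into the conservation laws give a two-variable system with a five-point support, and your coefficients $A,B,C,D$ of the eliminated cubic are computed correctly, with the sign conditions $C<0$ and $B>0$ reproducing exactly the two inequalities in \eqref{eq:conditionhhk} and the nonemptiness of the interval reproducing $k_3>k_1$. The problem is that the decisive step of the proof is asserted rather than carried out. Descartes' rule gives only the upper bound of three positive roots, as you acknowledge, and your appeal to Theorems~\ref{th:BS} and~\ref{thm:main2} is not an application of them: those theorems require you to exhibit, for your support $\{(1,1),(1,0),(1,-1),(0,0),(0,1)\}$, three $2$-simplices that are positively decorated by the $2\times 5$ coefficient matrix \emph{and} form part of a regular subdivision, and then to produce a height vector $h$ in the cone $\mathcal{C}_{\Delta_1,\Delta_2,\Delta_3}$ whose toric deformation $c_{ij}\mapsto c_{ij}t^{h_j}$ can be absorbed, after a monomial change of variables, into a rescaling of $k_4$ and $k_5$ alone. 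None of this appears in your argument. The sign pattern $(-,+,-,+)$ of the resultant cubic is not the same thing as positive decoration of simplices (a condition on $2\times 3$ submatrices being positively spanning), and there is no ``positive-circuit hypothesis'' in Theorem~\ref{th:BS} that your supports automatically ``fit.'' Consequently neither the existence of three roots nor the specific shape of the region $\beta_4>N_1$, $\beta_5/\beta_4>N_2$ is established; the latter comes precisely from the paper's explicit choice $h=(h_1,h_2,0,0,0)$ with $h_1,h_2>0$, the verification that it induces a regular triangulation containing the three decorated simplices, and the substitution $\bar x_4=t^{h_1}x_4$, $\bar x_5=t^{h_2}x_5$ that converts $t^h$ into $\overline{k_4}=t^{-h_2}k_4$, $\overline{k_5}=t^{-(h_1+h_2)}k_5$.

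A secondary issue: by passing to the univariate cubic $P$ you create the spurious problem of roots with $x_5>T_2$ (which correspond to $x_3<0$ and hence to no steady state), and you leave it unresolved. If you instead run the simplex argument on the bivariate system $(h_1,h_2)$ — which is what the framework of Section~\ref{sec:1} is designed for — every positive solution $(x_3,x_5)$ automatically satisfies $x_5<T_2$ because $h_2=0$ forces it, so the issue never arises. Your derivation of the inequalities is a useful independent check that the conditions \eqref{eq:conditionhhk} are the right ones, but to turn the proposal into a proof you must actually identify the three decorated simplices (for your support one can take, e.g., $\{(1,1),(1,0),(0,0)\}$, $\{(1,0),(1,-1),(0,0)\}$ and $\{(1,1),(0,1),(0,0)\}$, whose decoration conditions are exactly \eqref{eq:conditionhhk} and $k_3>k_1$), verify they lie in a common regular triangulation, and track the height deformation back to the rescaling of $k_4,k_5$.
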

  
  In \cite{feliumincheva}, the authors present necessary 
  and sufficient conditions for the multistationarity of the network. They prove that the region of the reaction rate constant space 
for which multistationarity exists is completely characterized by the inequality $k_3 > k_1$, but they do not describe 
  the particular stoichiometric compatibility classes for which there are multistationarity.  Instead, our   inequalities~\eqref{eq:conditionhhk} 
  give conditions on the total concentrations and the reaction rate constants for the ocurrence of  multistationarity. In 
\cite{kothamachu} necessary and 
  sufficient conditions on all the parameters of the system for bistability are provided,  with a treatment ad hoc using Sturm's Theorem.

  \subsection{The contents of the paper}
  The basic idea we develop in this paper is to detect in the convex hull of the support of the monomials that define 
 the equations of the steady states, (at least two) simplices \emph{positively decorated} (see Definition~\ref{def:dec}) 
 that form part of a regular subdivision. This ensures the extension of the positive 
 real solutions of the corresponding subsystems to the total system.  In Sections~\ref{sec:1} and~\ref{sec:2} we state and explain the theoretical 
 setting which is of general interest for the search of positive solutions of sparse 
 real polynomial systems beyond the applications we consider. In Section~\ref{sec:1} we work with the same support 
 for all the polynomials of the system. In Section~\ref{sec:2} we present a mixed approach to the results 
 of Section~\ref{sec:1}, considering different supports for each polynomial.
  We refer the reader to~\cite{triangulations,GKZ} for the definitions and main properties 
of the combinatorial objects we deal with.  

Our main results in these sections are Theorems~\ref{thm:main2} and~\ref{thm:mixedgamma}.
In the following sections, we apply these results for a class of biochemical reaction networks under mass-action kinetics.
This application is not straightforward and requires known and new results on the structure of their steady states.

 In Section~\ref{sec:3}, we study the sequential distributive multisite phosphorylation systems with any number $n$ of phosphorylation sites. 
Such systems were studied by many authors, starting with Wang and Sontag~\cite{sontag}, who gave bounds and conditions 
  for  monostationarity and multistationarity in the parameters, with an interesting treatment ad hoc which also allowed them 
  to find improved lower bounds (see also~\cite{kfc}). In \cite{mincheva}, Conradi and Mincheva showed using degree theory 
  and computations with the aid of a computer algebra system, that catalytic constants determine the capacity 
  for multistationarity in the dual phosphorylation mechanism. They also indicate in the case $n=2$ how to find values of the total 
  concentrations such that multistationarity occurs.  The more general interesting approach in \cite{feliumincheva} is also based  
  on degree theory. The authors show how to find conditions on the reaction rates to guarantee mono or multistationarity, 
  but they do not describe the particular total concentration constants for which there are multiple 
  equilibria. 
  With our approach, we obtain for any $n$ a system of three polynomial equations in three variables that describes the steady states, 
 in the framework of \cite{bihan}. 
  We give conditions on all the parameters (both on the reaction constants and on the total concentration constants) so that there are at least two positively 
  decorated simplices in a regular subdivision of the convex hull of our support and by rescaling the rest of the parameters, 
  we guarantee the existence of at least two non-degenerate positive steady states (see Theorem~\ref{th:nfosfo}).
 
Then, in Section~\ref{sec:4}, we show that these systems
  as well as the two-component system in the Introduction, are a particular example of a
class of biological systems introduced in~\cite{aliciaMer}  called MESSI systems, which contains many other important mechanisms.
We focus on a particular class of MESSI systems called $s$-toric MESSI systems, which includes the sequential phosphorylation systems, 
for which explicit monomial parametrizations of the steady states are given in~\cite{aliciaMer}. We prove general results for 
$s$-toric 
MESSI systems, that in particular explain our computations in Section~\ref{sec:3}. Theorem~\ref{th:reescalamientos} is the key 
to apply the framework of Theorem~\ref{thm:main2} to describe multistationarity regions for all these biological systems.

 \section{Positive solutions of sparse polynomial systems}\label{sec:1}
 
Along this section, we fix a finite point configuration
\[\mathcal{A} = \{a_1, \dots, a_n\}\subset \Z^d, \, n\geq d+2, \]
 and we  assume that the convex hull of $\mathcal{A}$ is a full-dimensional polytope.
A subset of $\mathcal A$ consisting of affinely
 independent points will be called a simplex; we will also say that it is a $d$-simplex when the dimension of
its convex hull is $d$.

\subsection{Regular subdivisions}
A \textit{regular subdivision of $\mA$} is induced by a height function $h:\mA \to \R$, $h=(h(a_1), \dots, h(a_n))$ as follows.  
Consider the \textit{lower convex hull} of the lifted configuration $\mA^h= \{(a_1, h(a_1)), \dots, (a_n,h(a_n))\}
\subset \R^{d+1}$, which is
the union of the faces of the convex hull of $\mA^h$ for which the inner normal directions have positive last coordinate. 
The associated regular subdivision $\Gamma_h$
is the union of the subsets $\mA_F=\{a_i \, : \, (a_i, h(a_i)) \in F\}$ of $\mA$ which are the projections back to $\mA$ of 
lifted points in a face $F$
of this lower convex hull.

It is useful to have a more geometric picture of this
subdivision, like the one depicted in Figure~\ref{fig:Regular triangulation}, but it is important to note that 
these subsets cannot be identified in general with their convex hulls, which are convex polytopes with integer vertices, but with 
their
\textit{marked} convex hulls containing all the points $a_j \in \mA$ for which the affine linear function which 
interpolates the values of $h$ at the vertices takes  the value $h(a_j)$ at $a_j$ (so, other points besides the vertices can 
occur).
A regular subdivision is called a \textit{regular triangulation} of $\mA$ if the only points in each subset of 
the subdivision are the vertices of their convex hull and these vertices are affinely independent.  
Figure~\ref{fig:NonRegularTriangulation} depicts a triangulation into simplices which is not regular, that is, which cannot be 
induced by any height function $h$.

 The set of all height vectors inducing a regular subdivision $\Gamma$
of $\mathcal{A}$ is defined by a finite number of linear inequalities. 
 Thus, this set is a finitely generated convex cone $\mathcal{C}_\Gamma$ in $\R^n$ with apex at the origin.  
 When $\Gamma$ is a triangulation, the cone $\mathcal{C}_\Gamma$ 
 is full dimensional (cut out by strict inequalities).
 All these facts and many more are described in \cite[Ch.7]{GKZ}.
 
  \begin{center}
 \begin{figure}
\begin{minipage}[t][4cm]{.45\textwidth}
\centering
 \begin{tikzpicture}%
 	[x={(1.595303cm, 0.021152cm)},
 	y={(0.905733cm, 0.517961cm)},
 	z={(-0.024093cm, 0.8770689cm)},
 	scale=0.900000,
 	back/.style={dotted,very thick},
 	edge/.style={color=black, thick},
 	facet/.style={fill=blue!95!black,fill opacity=0.800000},
 	vertex/.style={inner sep=1.2pt,circle,draw=black,fill=black,thick,anchor=base}]
 %
 %
 \coordinate (0.00000, 0.00000, 0.00000) at (0.00000, 0.00000, 0.00000);
 \coordinate (3.00000, 0.00000, 0.00000) at (3.00000, 0.00000, 0.00000);
 \coordinate (3.00000, 2.00000, 0.00000) at (3.00000, 2.00000, 0.00000);
 \coordinate (0.00000, 2.00000, 0.00000) at (0.00000, 2.00000, 0.00000);
 \coordinate (0.00000, 0.00000, 2.50000) at (0.00000, 0.00000, 2.50000);
 \coordinate (3.00000, 0.00000, 2.50000) at (3.00000, 0.00000, 2.50000);
 \coordinate (3.00000, 2.00000, 2.50000) at (3.00000, 2.00000, 2.50000);
 \coordinate (0.00000, 2.00000, 2.50000) at (0.00000, 2.00000, 2.50000);
 \coordinate (2.00000, 1.00000, 0.00000) at (2.00000, 1.00000, 0.00000);
 \coordinate (1.00000, 0.70000, 0.00000) at (1.00000, 0.70000, 0.00000);
 \coordinate (1.00000, 1.30000, 0.00000) at (1.00000, 1.30000, 0.00000);
 \coordinate (2.00000, 1.00000, 1.00000) at (2.00000, 1.00000, 1.00000);
 \coordinate (1.00000, 0.70000, 1.30000) at (1.00000, 0.70000, 1.30000);
 \coordinate (1.00000, 1.30000, 2.30000) at (1.00000, 1.30000, 2.30000);
 
 

 \draw[edge,back] (0.00000, 0.00000, 0.00000) -- (0.00000, 0.00000, 2.50000);
 \draw[edge,back](0.00000, 2.00000, 0.00000) -- (0.00000, 2.00000, 2.50000);
 \draw[edge,back](3.00000, 2.00000, 0.00000) -- (3.00000, 2.00000, 2.50000);
 \fill[facet, fill=yellow!,fill opacity=1.000000] (0.00000, 2.00000, 2.50000) -- (1.00000, 0.70000, 1.30000) -- (2.00000, 1.00000, 1.00000) -- cycle {};
 \fill[facet, fill=violet!50!white,fill opacity=1.000000] (0.00000, 2.00000, 2.50000) -- (3.00000, 2.00000, 2.50000) -- (2.00000, 1.00000, 1.00000) -- cycle {};
 \fill[facet, fill=orange!,fill opacity=1.000000] (0.00000, 2.00000, 2.50000) -- (1.00000, 0.70000, 1.30000) -- (0.00000, 0.00000, 2.50000) -- cycle {};
 \draw[edge] (0.00000, 2.00000, 2.50000) -- (2.00000, 1.00000, 1.00000);
 \fill[facet, fill=yellow!50!white,fill opacity=1.000000] (0.00000, 2.00000, 0.00000) -- (1.00000, 0.70000, 0.00000) -- (2.00000, 1.00000, 0.00000) -- cycle {};
 \fill[facet, fill=violet!20!white,fill opacity=1.000000] (0.00000, 2.00000, 0.00000) -- (3.00000, 2.00000, 0.00000) -- (2.00000, 1.00000, 0.00000) -- cycle {};
 \fill[facet, fill=orange!50!white,fill opacity=1.000000] (0.00000, 2.00000, 0.00000) -- (1.00000, 0.70000, 0.00000) -- (0.00000, 0.00000, 0.00000) -- cycle {};
 \draw[edge,back](1.00000, 1.30000, 0.00000) -- (1.00000, 1.30000, 2.30000);
 \draw[edge] (1.00000, 0.70000, 1.30000) -- (0.00000, 2.00000, 2.50000);
 \fill[facet, fill=blue!,fill opacity=1.000000] (2.00000, 1.00000, 1.00000) -- (3.00000, 2.00000, 2.50000) -- (3.00000, 0.00000, 2.50000) -- cycle {};
 \fill[facet, fill=red!,fill opacity=1.000000] (0.00000, 0.00000, 2.50000) -- (1.00000, 0.70000, 1.30000) -- (3.00000, 0.00000, 2.50000) -- cycle {};
 \fill[facet, fill=green!,fill opacity=1.000000] (3.00000, 0.00000, 2.50000) -- (1.00000, 0.70000, 1.30000) -- (2.00000, 1.00000, 1.00000) -- cycle {};

 \fill[facet, fill=blue!50!white,fill opacity=1.000000] (2.00000, 1.00000, 0.00000) -- (3.00000, 2.00000, 0.00000) -- (3.00000, 0.00000, 0.00000) -- cycle {};
 \fill[facet, fill=red!50!white,fill opacity=1.000000] (0.00000, 0.00000, 0.00000) -- (1.00000, 0.70000, 0.00000) -- (3.00000, 0.00000, 0.00000) -- cycle {};
 \fill[facet, fill=green!50!white,fill opacity=1.000000] (3.00000, 0.00000, 0.00000) -- (1.00000, 0.70000, 0.00000) -- (2.00000, 1.00000, 0.00000) -- cycle {};
 \draw[edge] (0.00000, 0.00000, 2.50000) -- (3.00000, 0.00000, 2.50000);
 \draw[edge] (0.00000, 0.00000, 2.50000) -- (1.00000, 0.70000, 1.30000);
 \draw[edge] (3.00000, 0.00000, 2.50000) -- (1.00000, 0.70000, 1.30000);
 \draw[edge] (2.00000, 1.00000, 1.00000) -- (1.00000, 0.70000, 1.30000);
 \draw[edge] (3.00000, 0.00000, 2.50000) -- (2.00000, 1.00000, 1.00000);
 \draw[edge] (0.00000, 0.00000, 0.00000) -- (3.00000, 0.00000, 0.00000);
 \draw[edge] (0.00000, 0.00000, 0.00000) -- (0.00000, 2.00000, 0.00000);
 \draw[edge] (3.00000, 0.00000, 0.00000) -- (3.00000, 2.00000, 0.00000);
 \draw[edge] (0.00000, 2.00000, 0.00000) -- (3.00000, 2.00000, 0.00000);
 \draw[edge] (3.00000, 0.00000, 0.00000) -- (2.00000, 1.00000, 0.00000);
 \draw[edge] (3.00000, 2.00000, 0.00000) -- (2.00000, 1.00000, 0.00000);
 \draw[edge] (0.00000, 2.00000, 0.00000) -- (2.00000, 1.00000, 0.00000);
 \draw[edge] (1.00000, 0.70000, 0.00000) -- (2.00000, 1.00000, 0.00000);
 \draw[edge] (1.00000, 0.70000, 0.00000) -- (3.00000, 0.00000, 0.00000);
 \draw[edge] (1.00000, 0.70000, 0.00000) -- (0.00000, 0.00000, 0.00000);
 \draw[edge] (1.00000, 0.70000, 0.00000) -- (0.00000, 2.00000, 0.00000);
 \draw[edge,back] (3.00000, 0.00000, 2.50000) -- (3.00000, 0.00000, 0.00000);
 \draw[edge,back](2.00000, 1.00000, 0.00000) -- (2.00000, 1.00000, 1.00000);
 \draw[edge,back] (1.00000, 0.70000, 0.00000) -- (1.00000, 0.70000, 1.30000);
 \draw[edge] (3.00000, 0.00000, 2.50000) -- (3.00000, 2.00000, 2.50000);
 \draw[edge] (3.00000, 2.00000, 2.50000) -- (2.00000, 1.00000, 1.00000);
 \draw[edge] (0.00000, 0.00000, 2.50000) -- (0.00000, 2.00000, 2.50000);
 \draw[edge] (3.00000, 2.00000, 2.50000) -- (0.00000, 2.00000, 2.50000);
 \node[vertex] at (0.00000, 0.00000, 0.00000)     {};
 \node at (-0.2,0,0) {$a$};
 \node[vertex] at (0.00000, 2.00000, 0.00000)     {};
 \node[vertex] at (3.00000, 0.00000, 0.00000)     {};
 \node[vertex] at (3.00000, 2.00000, 0.00000)     {};
 \node at (4.05,0.6,0) {$\R^d$};
 \node at (4.2,0.6,2) {$\R^{d+1}$};
 \node[vertex] at (0.00000, 0.00000, 2.50000)     {};
 \node[vertex] at (0.00000, 2.00000, 2.50000)     {};
 \node[vertex] at (3.00000, 0.00000, 2.50000)     {};
  \node at (-0.6,0,2.5) {$(a,h(a))$};
 \node[vertex] at (3.00000, 2.00000, 2.50000)     {};
 \node[vertex] at (2.00000, 1.00000, 0.00000)   	 {};
 \node[vertex] at (1.00000, 1.30000, 0.00000) 	 {};
 \node[vertex] at (1.00000, 0.70000, 0.00000)	 {};
 \node[vertex] at (2.00000, 1.00000, 1.00000)   	 {};
 \node[vertex] at (1.00000, 0.70000, 1.30000) 	 {};
 \node[vertex] at (1.00000, 1.30000, 2.30000)	 {};

 \end{tikzpicture}
 \caption{Regular triangulation.} \label{fig:Regular triangulation}
 \vspace{\baselineskip}
 \end{minipage}\hfill
 \begin{minipage}[t][4cm]{.45\textwidth}
 \centering
  \begin{tikzpicture}%
    	[scale=0.900000,
    	back/.style={loosely dotted, thin},
    	edge/.style={color=black, thick},
    	facet/.style={fill=red!70!white,fill opacity=0.800000},
    	vertex/.style={inner sep=1pt,circle,draw=black,fill=black,thick,anchor=base
    }]
    %
    %
    \coordinate (0.00000, 0.00000) at (0.00000, 0.00000);
    \coordinate (4.00000, 0.00000) at (4.00000, 0.00000);
    \coordinate (0.00000, 4.00000) at (0.00000, 4.00000);
    \coordinate (1.00000, 1.00000) at (1.00000, 1.00000);
    \coordinate (1.00000, 2.00000) at (1.00000, 2.00000);
    \coordinate (1.00000, 2.00000) at (2.00000, 1.00000);
    \draw[edge] (0.00000, 0.00000) -- (4.00000, 0.00000);
    \draw[edge] (0.00000, 0.00000) -- (0.00000, 4.00000);
    \draw[edge] (0.00000, 4.00000) -- (4.00000, 0.00000);
    \draw[edge] (0.00000, 0.00000) -- (1.00000, 1.00000);
    \draw[edge] (0.00000, 0.00000) -- (1.00000, 2.00000);
    \draw[edge] (4.00000, 0.00000) -- (1.00000, 1.00000);
    \draw[edge] (4.00000, 0.00000) -- (2.00000, 1.00000);
    \draw[edge] (0.00000, 4.00000) -- (1.00000, 2.00000);
    \draw[edge] (0.00000, 4.00000) -- (2.00000, 1.00000);
    \draw[edge] (1.00000, 1.00000) -- (2.00000, 1.00000);
    \draw[edge] (1.00000, 2.00000) -- (2.00000, 1.00000);
    \draw[edge] (1.00000, 1.00000) -- (1.00000, 2.00000);
    \node[vertex] at (0.00000, 0.00000) {};
    \node[vertex] at (4.00000, 0.00000) {};
    \node[vertex] at (0.00000, 4.00000) {};
    \node[vertex] at (1.00000, 1.00000) {};
    \node[vertex] at (1.00000, 2.00000){};
    \node[vertex] at (2.00000, 1.00000){};
    \end{tikzpicture}
    \caption{Non-regular triangulation.} \label{fig:NonRegularTriangulation}
    \end{minipage}
    \vspace{\baselineskip}
 \end{figure}
 \end{center}
 
We will denote by 
$A \in \Z^{(d+1) \times n}$ the integer matrix:
\begin{equation}\label{eq:A}
A=\left(
\begin{array}{ccc}
1 &  \ldots &1 \\
a_1& \ldots   & a_{n} 
\end{array}
\right),
\end{equation}
and by $A_h \in \R^{(d+2) \times n}$ the matrix:
\begin{equation}\label{eq:Ah}
A_h=\left(
\begin{array}{ccc}
1 &  \ldots &1 \\
a_1& \ldots   & a_{n} \\
h_1 &\ldots  & h_n
\end{array}
\right).
\end{equation}
Note that our assumption that the convex hull of $\mA$ has dimension $d$ is equivalent to assuming that the rank of $A$ is equal to $d+1$.

 Let $\Delta=\{a_{i_1},\dots,a_{i_{d+1}}\}$ be a $d$-simplex with vertices in $\mathcal{A}$.
Let $I=\{i_1,i_2, \ldots,i_{d+1}\}$ and assume $i_1 <i_2< \cdots < i_{d+1}$. 
  Denote by $d_{I}$ the determinant of the $(d+1) \times (d+1)$ submatrix of $A$ with columns indicated by $I$, 
  which is nonzero because we are assuming
  that $\Delta$ is a simplex.  Also, for any index $i \notin I$, denote by $d_{I \cup \{i\}}(h)$
   the determinant of the $(d+2) \times (d+2)$ submatrix of $A_h$
  with columns indicated by $I \cup \{i\}$, multiplied by the sign of the permutation
  that sends the set of indices in $I \cup \{i\}$ ordered by $<$ to $(i_1, \dots, i_{d+1}, i)$ with $i$ as the last index.
Using the Laplace expansion of the determinant  along the last row, we see that
$d_{I \cup \{i\}}(h)$ is an affine linear function of $h$ which equals $\langle m^I_i, h \rangle$, 
where $m^I_i$ is a vector in the kernel of $A$ with support included $I \cup \{i\}$ and with nonzero $i$-th coordinate, for any $i \notin I$.

  Consider the cone $\mathcal{C}_{\Delta}$ of all height vectors inducing 
 a regular subdivision of $\mA$ that contains $\Delta$. 
Observe that $\mathcal{C}_{\Delta}$ is non-empty; for instance, any vector  $h \in \R^{n}$ with $h_i=0$ for any $i \in I$ and 
$h_i  >0$ for any index
  $i \notin I$, belongs to $\mathcal{C}_{\Delta}$. Moreover, $C_\Delta$ is an open rational polyhedral cone, described as follows:

  \begin{lemma}\label{lem:CDelta} With the previous notations, we have:
 \[\mathcal{C}_{\Delta}=\{(h_1,\dots,h_n)\in \R^n \, : \,  \langle d_I \cdot m^I_i, h \rangle \, = \, d_I \cdot d_{I \cup \{i\}}(h)  > 0 \},\]
and the $n-(d+1)$ vectors $d_I \cdot m^I_i, i \notin I,$ are a basis of the kernel of $A$.
 \end{lemma}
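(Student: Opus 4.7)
The plan is to give geometric meaning to the quantity $d_{I \cup \{i\}}(h)$ so that the stated inequalities translate directly into the defining property of $\Delta$ as a cell of $\Gamma_h$. Since $\Delta$ is a $d$-simplex, the $(d+1) \times (d+1)$ submatrix of $A$ indexed by $I$ is invertible, so there is a unique affine function $\lambda \colon \R^d \to \R$ with $\lambda(a_{i_k}) = h_{i_k}$ for all $k = 1, \ldots, d+1$. By the standard characterization of the lower convex hull, $\Delta$ appears as a cell of $\Gamma_h$ if and only if this interpolant satisfies $\lambda(a_i) < h_i$ for every $i \notin I$; that is, each remaining lifted point $(a_i, h_i)$ lies strictly above the affine hyperplane through the lifted vertices of $\Delta$. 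The first claim will therefore reduce to showing that $d_I \cdot d_{I \cup \{i\}}(h) > 0$ is equivalent to $h_i > \lambda(a_i)$.

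The key step is to evaluate $d_{I \cup \{i\}}(h)$ via a column reduction. Write $(1, a_i) = \sum_{k=1}^{d+1} \alpha_k (1, a_{i_k})$, which has a unique solution because the vectors on the right form a basis of $\R^{d+1}$, and note that by affineness $\lambda(a_i) = \sum_k \alpha_k h_{i_k}$. In the $(d+2) \times (d+2)$ submatrix whose ordered columns are indexed by $(i_1, \ldots, i_{d+1}, i)$, subtracting $\alpha_k$ times column $k$ from the last column zeroes out the top $d+1$ entries of the last column and leaves $h_i - \lambda(a_i)$ at the bottom. Laplace expansion along the last column, together with the sign convention built into the definition of $d_{I \cup \{i\}}(h)$, then gives
\[ d_{I \cup \{i\}}(h) \, = \, d_I \cdot (h_i - \lambda(a_i)). \]
Multiplying by $d_I$ and using $d_I^2 > 0$ yields the equivalence $d_I \cdot d_{I \cup \{i\}}(h) > 0 \iff h_i > \lambda(a_i)$, hence the cone description.

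The same formula also exposes $m^I_i = d_I\, e_i - d_I \sum_k \alpha_k\, e_{i_k}$, which is visibly supported in $I \cup \{i\}$ with nonzero $i$-th coordinate, and lies in $\ker A$ since $\sum_k \alpha_k = 1$ and $\sum_k \alpha_k a_{i_k} = a_i$. The family $\{d_I \cdot m^I_i\}_{i \notin I}$, projected onto the coordinates indexed by $\{1, \ldots, n\} \setminus I$, becomes $\{d_I^2 \, e_i\}_{i \notin I}$, hence is linearly independent; as its cardinality is $n-(d+1) = \dim \ker A$, it is a basis. The only point that genuinely requires care is to verify that the permutation sign prescribed in the definition matches precisely the signs produced by the column reduction and Laplace expansion so that the coefficient is genuinely $+d_I$ (and not $-d_I$); I expect this sign bookkeeping, rather than anything conceptual, to be the main obstacle.
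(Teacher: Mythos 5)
Your proof is correct: the column reduction yields the identity $d_{I\cup\{i\}}(h)=d_I\,\bigl(h_i-\lambda(a_i)\bigr)$ (the sign convention in the definition of $d_{I\cup\{i\}}(h)$, which reorders the columns so that $i$ comes last, makes the Laplace cofactor exactly $+d_I$ as you computed), and combined with the standard interpolant characterization of when $\Delta$ is a cell of $\Gamma_h$ this gives both the cone description and, via the explicit formula $m^I_i=d_I e_i-d_I\sum_k\alpha_k e_{i_k}$, the basis claim. The paper declares this proof ``straightforward'' and omits it; your argument is precisely the intended one, with the details filled in.
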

The proof of Lemma~\ref{lem:CDelta} is straightforward.
Let $p\ge 1$ and consider  $\Delta_1, \dots, \Delta_p$ $d$-simplices 
  in $\mathcal{A}$. We denote by ${\mathcal C}_{\Delta_1,\dots, \Delta_p}$ the  cone of all height vectors $h$ 
  defining a regular subdivision of $\mA$ that contains $\Delta_1, \dots,\Delta_p$. 
 We deduce from Lemma~\ref{lem:CDelta} the following description.

\begin{lemma}\label{lem:CDeltap}
Let  $\Delta_1, \dots, \Delta_p$ be simplices 
  in $\mathcal{A}$ which occur in a regular subdivision of $A$. If the index set of  the vertices of 
 $\Delta_k$ is  $I_k$ for any $k=1, \dots, p$,   then the nonempty open polyhedral cone
  ${\mathcal C}_{\Delta_1, \dots,\Delta_p}$ is defined by the linear inequalities
 \begin{equation} \label{E:conefortwo}
\langle d_{I_k} \cdot m^{I_k}_i , h \rangle  > 0 \quad \text{ for all }  k = 1, \dots, p,  \text{ and all } i \notin I_k,
 \end{equation}
and the vectors $\{  d_{I_k} \cdot m^{I_k}_i , k=1,\dots, p, i \notin I_k\}$ generate the kernel of $A$.
\end{lemma}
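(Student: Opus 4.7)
The plan is to deduce both assertions directly from Lemma~\ref{lem:CDelta} applied to each individual simplex $\Delta_k$. The key observation is that, by definition of the cones involved, a height vector $h$ induces a regular subdivision of $\mA$ containing \emph{all} of $\Delta_1,\dots,\Delta_p$ if and only if it induces a regular subdivision of $\mA$ containing each $\Delta_k$ separately. Hence, tautologically,
\[
\mathcal{C}_{\Delta_1,\dots,\Delta_p} \;=\; \bigcap_{k=1}^{p} \mathcal{C}_{\Delta_k}.
\]

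First I would substitute the description of each $\mathcal{C}_{\Delta_k}$ from Lemma~\ref{lem:CDelta} into this intersection: the resulting set is cut out by the family of strict linear inequalities $\langle d_{I_k}\cdot m^{I_k}_i,h\rangle > 0$ as $k$ runs over $1,\dots,p$ and $i$ runs over the complement of $I_k$, which is precisely the system~(\ref{E:conefortwo}). Since a finite intersection of open half-spaces is an open polyhedral cone, this gives the claimed shape of $\mathcal{C}_{\Delta_1,\dots,\Delta_p}$; its nonemptiness is part of the hypothesis that the $\Delta_k$ actually coexist in at least one regular subdivision.

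For the spanning statement I would simply invoke Lemma~\ref{lem:CDelta} once more: the $n-(d+1)$ vectors $\{d_{I_1}\cdot m^{I_1}_i : i\notin I_1\}$ already form a basis of $\ker(A)$, so the larger family obtained by letting $k$ vary from $1$ to $p$ a fortiori generates $\ker(A)$. I don't anticipate any genuine obstacle: the lemma is essentially a bookkeeping corollary of Lemma~\ref{lem:CDelta}, and the only point to be careful about is the tautological identification of $\mathcal{C}_{\Delta_1,\dots,\Delta_p}$ with the intersection $\bigcap_k \mathcal{C}_{\Delta_k}$, which holds directly from the definitions with no closure or boundary issues to worry about.
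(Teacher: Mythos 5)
Your argument is correct and is exactly the deduction the paper intends: the authors give no separate proof of this lemma, stating only that it is deduced from Lemma~\ref{lem:CDelta}, and the tautological identity $\mathcal{C}_{\Delta_1,\dots,\Delta_p}=\bigcap_{k}\mathcal{C}_{\Delta_k}$ (valid because each height vector induces a unique regular subdivision) together with the basis statement of Lemma~\ref{lem:CDelta} is precisely the intended route. Both the description by the inequalities~\eqref{E:conefortwo} and the spanning claim follow as you say, with nonemptiness coming from the hypothesis that the $\Delta_k$ occur together in some regular subdivision.
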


  \begin{figure}[]
	
  \centering
 \begin{tikzpicture}
 	[scale=1.200000,
 	back/.style={loosely dotted, thin},
 	edge/.style={color=black, thick},
 	facet/.style={fill=red!70!white,fill opacity=0.800000},
 	vertex/.style={inner sep=1.5pt,circle,draw=black,fill=black,thick,anchor=base
 }]
 %
 %
 \coordinate (0.00000, 0.50000) at (0.00000, 0.50000);
 \coordinate (2.00000, 1.00000) at (2.00000, 1.00000);
 \coordinate (1.00000, 0.00000) at (1.00000, 0.00000);
 \coordinate (1.00000, 2.00000) at (1.00000, 2.00000);
\fill[facet, fill=red!40!white,fill opacity=0.800000] (1.00000, 2.00000) --  (1.00000, 0.00000)-- (0.00000,0.50000) -- cycle {};
\fill[facet, fill=blue!80!white,fill opacity=0.800000] (1.00000, 2.00000) --  (1.00000, 0.00000)-- (2.00000,1.00000) -- cycle {};

 \draw[edge] (0.00000, 0.50000) -- (1.00000, 2.00000);
 \draw[edge] (0.00000, 0.50000) -- (1.00000, 0.00000);
 \draw[edge] (1.00000, 0.00000) -- (1.00000, 2.00000);
 \draw[edge] (1.00000, 0.00000) -- (2.00000, 1.00000);
 \draw[edge] (1.00000, 2.00000) -- (2.00000, 1.00000);
 \node[vertex] at (0.00000, 0.50000){};
 \node[vertex] at (1.00000, 0.00000){};
 \node[vertex] at (1.00000, 2.00000){};
\node[vertex] at (2.00000, 1.00000){};
\node at (2.00000, 0.50000){$\Delta_2$};
 \node at (0.00000, 1.20000){$\Delta_1$};
 \end{tikzpicture}
  \hspace{2cm}
 \begin{tikzpicture}
 	[scale=1.200000,
 	back/.style={loosely dotted, thin},
 	edge/.style={color=black, thick},
 	facet/.style={fill=red!70!white,fill opacity=0.800000},
 	vertex/.style={inner sep=1.5pt,circle,draw=black,fill=black,thick,anchor=base
 }]
 %
 %
 \coordinate (0.00000, 0.00000) at (0.00000, 0.00000);
 \coordinate (1.00000, 1.00000) at (1.00000, 1.00000);
 \coordinate (2.30000, 1.00000) at (2.30000, 1.00000);
 \coordinate (0.80000, 2.00000) at (0.80000, 2.00000);
\fill[facet, fill=red!40!white,fill opacity=0.800000] (0.80000, 2.00000) --  (1.00000, 1.00000)-- (0.00000,0.00000) -- cycle {};
\fill[facet, fill=blue!80!white,fill opacity=0.800000] (1.00000, 1.00000) --  (0.00000, 0.00000)-- (2.30000,1.00000) -- cycle {};

 \draw[edge] (0.00000, 0.00000) -- (1.00000, 1.00000);
 \draw[edge] (0.00000, 0.00000) -- (0.80000, 2.00000);
 \draw[edge] (0.00000, 0.00000) -- (2.30000, 1.00000);
 \draw[edge] (0.80000, 2.00000) -- (1.00000, 1.00000);
 \draw[edge] (1.00000, 1.00000) -- (2.30000, 1.00000);
  \draw[edge] (1.00000, 1.00000) -- (2.30000, 1.00000);
  \draw [dashed] (0.80000, 2.00000) -- (2.30000, 1.00000);
 \node[vertex] at (0.00000, 0.00000){};
 \node[vertex] at (1.00000, 1.00000){};
 \node[vertex] at (0.80000, 2.00000){};
\node[vertex] at (2.30000, 1.00000){};
\node at (2.00000, 0.50000){$\Delta_2$};
 \node at (0.00000, 1.20000){$\Delta_1$};
 \end{tikzpicture}
 \caption{Examples of simplices $\Delta_1$ and $\Delta_2$, which share a facet, $d=2$.} \label{fig:2simplicescommonfacet}
\end{figure}
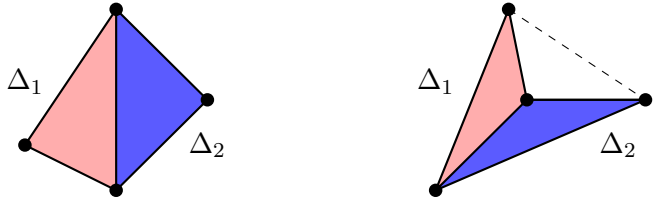

 \begin{defi}
 \label{def:share}
  We will say that two simplices $\Delta_1, \Delta_2 \subset \mA$
 \textit{share a facet}  if the intersection of their convex hulls is a facet of both, that is, a face of codimension one.  
 See Figure \ref{fig:2simplicescommonfacet}.  
 \end{defi}
  We will need the following remark:

\begin{remark}\label{rem:circuit}  
{\rm A point configuration  $\mathcal B  =\{b_1, \dots, b_{d+2}\}$ with $d+2$ points which span $\R^{d}$ and such that 
any proper subset is affinely independent, is called a \textit{circuit}. Any circuit $\mathcal B$ has exactly two triangulations 
$\Gamma_{\pm}$, which are furthermore regular.
 They can be described in the following way
 (see \cite[chapter 7, Proposition 1.2.]{GKZ}).
 Consider any nonzero vector $\lambda \in \R^{d+2}$ such that $\sum_{i=1}^{d+2} \lambda_i=0$
 and $\sum_{i=1}^{d+2} \lambda_i b_i=0$ (in other words, any nontrivial affine relation on ${\mathcal B}$). 
 Note that all coordinates of $\lambda$ are nonzero.
 Write $[d+2]=\{1, \dots, d+2\}$ as the disjoint union $N_+ \sqcup N_-$, with $N_{+} = \{ i \in [d+2]\, : \, \lambda_i >0\}$ and
 $N_- = \{ i \in [d+2]\, : \, \lambda_i < 0\}$. The $d$-simplices of $\Gamma_{+}$ are the sets $[d+2] 
 \setminus \{ i\}$ for $ i \in N_{+}$.
 Similarly, the $d$-simplices of $\Gamma_{-}$ are the sets $[d+2] \setminus \{ i\}$ for $ i \in N_{-}$.}
 \end{remark}
 
 We are ready to prove the following proposition, that we will need in our applications.
 
\begin{prop}\label{prop:2}
Let  $\Delta_1$, $\Delta_2$ be two  $d$-simplices 
  in $\mathcal{A}$ which share a facet.
 Then, there exists a regular subdivision of $\mathcal{A}$ containing $\Delta_1$ and $\Delta_2$, 
 and so the cone $\mathcal C_{\Delta_1, \Delta_2}$ is nonempty.
 Moreover, there exists a regular triangulation containg both
 simplices.
 \end{prop}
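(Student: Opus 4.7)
The plan is to exhibit an explicit height vector $h\in\R^n$ lying in the cone $\mathcal{C}_{\Delta_1,\Delta_2}$; once we have established that this cone is nonempty, by Lemma~\ref{lem:CDeltap} it is an open polyhedral cone, so a sufficiently small generic perturbation of $h$ stays inside it and induces a regular triangulation of $\mA$ still containing both $\Delta_1$ and $\Delta_2$.

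Let $F=\Delta_1\cap\Delta_2$ with vertex set $\{u_1,\ldots,u_d\}$, let $v$ be the vertex of $\Delta_1$ not in $F$, and $w$ the vertex of $\Delta_2$ not in $F$. The hypothesis that $\conv(\Delta_1)\cap\conv(\Delta_2)$ equals $F$ (a face of codimension one of both) forces $v$ and $w$ to lie in opposite open half-spaces of $\R^d$ determined by the hyperplane $\mathrm{aff}(F)$, otherwise the two full-dimensional simplices would overlap on a full-dimensional set. I would then define $h(u_i)=0$ for $i=1,\ldots,d$, $h(v)=h(w)=1$, and $h(a)=M$ for every other $a\in\mA\setminus(F\cup\{v,w\})$, with $M$ a positive constant chosen sufficiently large.

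The core verification is that $\Delta_1$ and $\Delta_2$ appear as lower facets of $\mA^h$. First I would check that the $d+2$ lifted points $(u_1,0),\ldots,(u_d,0),(v,1),(w,1)$ are affinely independent in $\R^{d+1}$, so that their convex hull is a genuine $(d+1)$-simplex: the unique (up to scaling) affine relation on $\{u_1,\ldots,u_d,v,w\}$ in $\R^d$ is obtained by writing the intersection point of the segment $\overline{vw}$ with $\mathrm{aff}(F)$ both as a convex combination of $v,w$ and as an affine combination of the $u_i$, so the coefficients at $v$ and $w$ have the same sign; the last-coordinate equation $\beta+\gamma=0$ coming from the lift then forces $\beta=\gamma=0$, after which the affine independence of $F$ kills the remaining coefficients. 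Next, the facet of this $(d+1)$-simplex obtained by omitting $(w,1)$ lies on the graph of the unique affine map $\phi\colon\R^d\to\R$ with $\phi|_F\equiv 0$ and $\phi(v)=1$; since $w$ is on the opposite side of $\mathrm{aff}(F)$ from $v$, we have $\phi(w)<0<1$, so $(w,1)$ is strictly above this hyperplane and the outward normal to the facet has negative last coordinate. Hence it is a lower facet projecting to $\Delta_1$, and the symmetric argument handles $\Delta_2$. For $M$ large enough, every remaining lifted point $(a,M)$ lies strictly above both affine hyperplanes, so it cannot create a new lower facet inside $\conv(F\cup\{v,w\})$, and $\Delta_1,\Delta_2$ are cells of $\Gamma_h$.

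Translating back through Lemma~\ref{lem:CDeltap}, the computations above say precisely that every inequality defining $\mathcal{C}_{\Delta_1,\Delta_2}$ is satisfied strictly by $h$, so $h$ lies in the nonempty open cone $\mathcal{C}_{\Delta_1,\Delta_2}$, and a generic small perturbation within this cone yields a regular triangulation with the required property. The main obstacle is the affine-independence step for the lifted $d+2$ points, since this is where the shared-facet hypothesis is actually used (via the opposite-sides conclusion for $v$ and $w$); once the $(d+1)$-simplex is in hand, identifying $\Delta_1$ and $\Delta_2$ as lower facets is a direct computation.
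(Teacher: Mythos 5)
Your proof is correct and follows essentially the same route as the paper's: both arguments hinge on the observation that the two apexes lie on opposite sides of $\mathrm{aff}(F)$, use it to realize $\Delta_1$ and $\Delta_2$ simultaneously in a regular subdivision of the circuit $\Delta_1\cup\Delta_2$, and then lift the remaining points of $\mA$ high enough, perturbing generically to obtain a regular triangulation. The only difference is cosmetic: the paper invokes the classification of the two regular triangulations of a circuit (Remark~\ref{rem:circuit}), whereas you verify the lower-facet condition for an explicit height vector by hand.
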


 \begin{proof}
 The configuration $\mathcal B=\Delta_1 \cup \Delta_2$ has cardinality $d+2$ and it is a circuit. 
 As we remarked $\mathcal B$ has exactly two regular triangulations $\Gamma_{\pm}$.
 Without loss of generality, assume ${\mathcal B}= \{ a_1, \dots, a_{d+2}\}$, with
 $F=\{a_1, \dots, a_d\}$ the common facet of $\Delta_1$ and  $\Delta_2$.  
 Let $\lambda \in \Z^{d+2}$ be a  nontrivial affine relation on ${\mathcal B}$. 
 As $a_{d+1}$ and $a_{d+2}$ lie in opposite sides of the hyperplane passing 
 through $F$,  it holds that $\lambda_{d+2}$ and $ \lambda_{d+1}$ have the same sign. Therefore $\Delta_1$ and $\Delta_2$ belong 
 to 
 the same regular triangulation, say $\Gamma_+$. 
 
 Let $h:\mathcal B \to \R$ be a height
 function inducing $\Gamma_+$. Let $\varphi_1, \dots, \varphi_\ell$ be the affine linear
 functions  which interpolate the values of $h$ at each of the $d$-simplices of
 $\Gamma_+$ and set $\varphi = {\rm max}\{\varphi_1, \dots, \varphi_\ell\}$.
 For any choice of generic positive values $h_{d+3}, \dots, h_n$ verifying
 $h_j > \varphi(a_j)$ for any $j=d+3, \dots, n$,  the height function $h': \mA\to \R$ 
 that extends $h$ by defining $h'(a_j)= h_j, j=d+3, \dots, n$, induces a
 regular triangulation of $\mA$ extending $\Gamma_+$, and so in particular, it contains $\Delta_1$ and $\Delta_2$.
 \end{proof}

 \begin{remark} {\rm Under the notations of Lemma \ref{lem:CDeltap} with $p=2$,
if $\Delta_1$ and $\Delta_2$ share a common facet with vertices $a_i$ with $i \in I$,  then the inequality
corresponding to $k=1$ and $i \in I_2 \setminus I$ coincides with the one corresponding to $k=2$ and
 $i \in I_1 \setminus I$, so that we might forget one of these inequalities in \eqref{E:conefortwo}
 to get $2(n-d-1)-1=2n-2d-3$ inequalities defining $\mathcal{C}_{\Delta_1,\Delta_2}$. 
 This generalizes the circuit case where any of the two regular triangulations is  determined by one of its simplices.}
 \end{remark}

 \subsection{Decorated simplices and lower bounds for the number of positive solutions}
 
 Consider a sparse polynomial system in $d$ variables $x=(x_1, \dots, x_d)$ with support included in
 $\mathcal A$ and coefficient matrix $C = (c_{ij}) \in \R^{d \times n}$:
 \begin{equation}\label{systemf}
 f_{1}(x)=\dots=f_{d}(x)=0,
 \end{equation}
 with
 \begin{equation*}
 f_{i}(x)=\sum_{j=1}^n c_{ij} \, x^{a_j} \in \R[x_1,\dots,x_d], \ i=1,\dots,d.
 \end{equation*}

 Let $\Gamma$ be a regular subdivision of $\mathcal A$ and $h \in \mathcal{C}_\Gamma$.
 Consider the following family of polynomial systems parametrized by a positive real
 number $t$:
 \begin{equation}\label{systemwitht}
 f_{1,t}(x)=\dots=f_{d,t}(x)=0,
 \end{equation}
 where 
 \begin{equation*}
 f_{i,t}(x)=\sum_{j=1}^n c_{ij}\, t^{h(a_j)} \, x^{a_j} \in \R[x_1,\dots,x_d], \ i=1,\dots,d, \ t>0.
 \end{equation*}
 For each positive real value of $t$, this system has again support included in $\mathcal{A}$. Recall that
 a common root of~\eqref{systemwitht} is nondegenerate when it is not a zero of the Jacobian of $f_{1,t}, \dots, f_{d,t}$.

 Following Section 3 in~\cite{bihan}, we define:

 \begin{defi} A $d\times (d+1)$ matrix $M$ with real entries is called positively spanning
  if all the values $(-1)^i\minor(M,i)$ are nonzero and have the same sign, 
 where $\minor(M,i)$ is the determinant of the square matrix obtained by removing the $i$-th column.
 \end{defi}
 Equivalently, a matrix is positively spanning if all the coordinates of any non-zero vector 
 in the kernel of the matrix are non-zero and have the same sign.
 \begin{defi} \label{def:dec}
 Let $C$ be a $d\times n$ matrix with real entries. We say that a $d$-simplex 
 $\Delta=\{a_{i_1},\dots,a_{i_{d+1}}\}$ in $\Gamma$ is positively decorated by $C$  
 if the $d\times(d+1)$ submatrix of $C$ with columns indicated by $\{i_1,\dots,i_{d+1}\}$ is positively spanning.
 \end{defi}

 The following result is a generalization of Theorem 3.4 in~\cite{bihan}, 
 with essentially the same proof, combined with Proposition~\ref{prop:2}:
 
 \begin{thm} \label{th:BS}
Let $\mathcal A$ and $p$ $d$-simplices $\Delta_1, \dots, \Delta_p$
which occur in a regular subdivision $\Gamma$ of  $\mathcal A$, and which are positively decorated 
by a matrix $C  \in \R^{d \times n}$. Let $h$ be a height function that defines $\Gamma$.
Then, there exists $t_0\in\R_{>0}$ such that for all $0<t<t_0$, the number of (nondegenerate) solutions of (\ref{systemwitht}) 
 contained in the positive orthant is at least $p$.
In particular, if there are two $d$-simplices with vertices in $\mathcal A$ sharing a facet which are both positively decorated by $C$, 
the number of positive solutions of \eqref{systemwitht} is at least two.
 \end{thm}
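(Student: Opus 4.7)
The plan is to follow the strategy of \cite[Theorem~3.4]{bihan}, producing for each positively decorated simplex $\Delta_k$ a separate branch of positive solutions of \eqref{systemwitht} via a toric rescaling adapted to $\Delta_k$, and then showing that distinct simplices yield distinct branches. The final assertion will follow by invoking Proposition~\ref{prop:2}, which guarantees that two $d$-simplices sharing a facet fit into a common regular subdivision of $\mathcal{A}$.

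Fix $k\in\{1,\dots,p\}$ and write $I_k=\{i_1,\dots,i_{d+1}\}$ for the index set of the vertices of $\Delta_k$. Let $\alpha^{(k)}(a)=\langle \beta^{(k)},a\rangle+\gamma^{(k)}$ be the unique affine form on $\R^d$ that interpolates $h$ at these vertices. Since $h\in\mathcal{C}_\Gamma$ and $\Delta_k\in\Gamma$, one has $h(a_j)\geq \alpha^{(k)}(a_j)$ for every $j$, with equality precisely when $j\in I_k$. After the monomial substitution $x_i=t^{-\beta^{(k)}_i}y_i$ and multiplication of each equation by $t^{-\gamma^{(k)}}$, the system \eqref{systemwitht} becomes
\begin{equation*}
g_{i,t}^{(k)}(y)\;=\;\sum_{j=1}^{n} c_{ij}\,t^{\,h(a_j)-\alpha^{(k)}(a_j)}\,y^{a_j}\;=\;0,\qquad i=1,\dots,d.
\end{equation*}
As $t\to 0^+$, the coefficients of the monomials indexed by $j\notin I_k$ vanish, while those with $j\in I_k$ remain equal to $c_{ij}$; hence $g_{i,t}^{(k)}$ converges coefficient-wise to the subsystem $g_i^{(k)}(y)=\sum_{j\in I_k}c_{ij}\,y^{a_j}$. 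The positive decoration of $\Delta_k$ means that the kernel of the $d\times(d+1)$ submatrix $(c_{ij})_{i,\,j\in I_k}$ is spanned by a vector $v^{(k)}$ whose coordinates share a common sign; imposing that $(y^{a_j})_{j\in I_k}$ be a positive multiple of $v^{(k)}$ and taking coordinate ratios yields $d$ binomial equations whose exponent matrix is invertible by the affine independence of $\{a_j\}_{j\in I_k}$, so taking logarithms produces a unique positive solution $\bar y^{(k)}$, which is moreover nondegenerate. The implicit function theorem then furnishes $t_0^{(k)}>0$ and a smooth curve $y^{(k)}(t)$ of nondegenerate positive solutions of $g_{i,t}^{(k)}=0$ for $0<t<t_0^{(k)}$ with $y^{(k)}(t)\to \bar y^{(k)}$; undoing the rescaling gives a nondegenerate positive solution $x^{(k)}(t)=t^{-\beta^{(k)}}y^{(k)}(t)$ of \eqref{systemwitht}.

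The main obstacle is to verify that the $p$ branches $x^{(1)}(t),\dots,x^{(p)}(t)$ are pairwise distinct for small $t>0$. Since the simplices $\Delta_k$ are distinct $d$-dimensional cells of the same regular subdivision of $\mathcal{A}$, the interpolating affine forms $\alpha^{(k)}$ (and hence their linear parts $\beta^{(k)}$) are pairwise distinct. Consequently $\log x^{(k)}(t)/\log t\to -\beta^{(k)}$ as $t\to 0^+$, and distinctness of the limit vectors forces the branches to be distinct for $t$ sufficiently small. Choosing $t_0=\min_k t_0^{(k)}$ and shrinking it if necessary proves the first assertion. For the last sentence, if $\Delta_1$ and $\Delta_2$ are positively decorated $d$-simplices sharing a facet, Proposition~\ref{prop:2} furnishes a regular subdivision of $\mathcal{A}$ containing both of them, and the preceding argument delivers at least two nondegenerate positive solutions of \eqref{systemwitht}.
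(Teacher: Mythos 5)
Your proof is correct and follows essentially the same route as the paper, which itself only sketches the argument by deferring to Theorem~3.4 of \cite{bihan}: one nondegenerate positive solution per positively decorated simplex obtained from the kernel/binomial description of the restricted system, continuation to small $t>0$ via nondegeneracy, and separation of the $p$ branches through the distinct linear parts of the interpolating affine forms (the paper phrases this as disjointness of the sets $t^{\alpha}K$), with the final assertion reduced to Proposition~\ref{prop:2} in both cases. The only steps you assert rather than prove --- nondegeneracy of the limit solution $\bar y^{(k)}$, and the fact that distinct $d$-cells of a common regular subdivision have interpolating affine forms with distinct \emph{linear} parts (which needs the one-line observation that parallel supporting forms would violate $h\geq\alpha^{(k)}$ on one of the two cells) --- are both true and are at the same level of detail the paper itself leaves implicit.
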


 
 The idea of the proof consists first in observing that the system obtained by
 considering only the monomials in
 a positively decorated $d$-simplex has exactly one nondegenerate positive solution. 
 Then, we can take a compact set $K$ in the positive orthant which contains all the nondegenerate
positive solutions of the restricted systems, for each positively decorated $d$-simplex. If $t > 0$ is small enough, 
we can obtain pairwise disjoint sets (of the form $t^{\alpha}\dot K, \alpha \in \R^d$), such that each one contains at 
least one non-degenerate positive of the system \eqref{systemwitht}. We note that in \cite{bihan}, 
the result is for a regular triangulation but it also holds if we have a regular subdivision.
 
 We will give a similar result in Theorem~\ref{thm:main2} below, but our focus is to describe a subset 
  with nonempty interior in the space of coefficients
 where we can bound from below the number of positive solutions of the associated system.
We start with a general result about convex polyhedral cones.

\begin{prop}\label{prop:cone}
Let $L$ be a linear subspace of $\R^n$ of dimension $\ell_1$ together with a basis $\{v_1, \dots, v_{\ell_1}\}$.  
Let $m_1, \dots, m_{\ell}$ be a system of generators of $L^\perp$  such that the open polyhedral cone
$${\mathcal C} =\{h \in \R^n \, : \, \langle m_r, h \rangle > 0,  \quad r= 1, \dots, \ell \}$$
is non-empty.
For any $\varepsilon \in \R_{>0}^\ell$, denote by $\mathcal C_\varepsilon$ the $n$-dimensional convex polyhedral cone
\begin{equation} \label{eq:cone}
{\mathcal C}_\varepsilon \, = \, \{ h \in \R^n \, : \, 
\langle m_r, h \rangle > \varepsilon_r,  \quad r= 1, \dots, \ell \}.
\end{equation}
Consider the map
$\varphi: \R_{>0}^{\ell_1} \times\R_{>0} \times \R^n \to \R^n_{>0}$:
\[ \varphi(\alpha,t,h) \, = \, (  t^{h_1} \prod_{j=1}^{\ell_1} \alpha_j^{v_{j1}}, \dots,   t^{h_n} \prod_{j=1}^{\ell_1} \alpha_j^{v_{jn}}).\]
Then, we have:
\begin{eqnarray}\label{eq:varphi}
\varphi(\R_{>0}^{\ell_1} \times(0,t_0) \times {\mathcal C}_\varepsilon) \, &=& \, \{ \gamma \in \R_{>0}^n \, : \, \gamma^{m_r} < t_0^{\varepsilon_r}, \, 
r=1 \dots,\ell\}\ \mbox{and}\\ \varphi(\R_{>0}^{\ell_1} \times (0,t_0] \times \bar{\mathcal C}_\varepsilon) \, &=& \, 
\{ \gamma \in \R_{>0}^n \, : \, \gamma^{m_r} \leq t_0^{\varepsilon_r}, \, r=1 \dots,\ell\},
\end{eqnarray}
where $\bar{\mathcal C}_\varepsilon$ denotes the closure of ${\mathcal C}_\varepsilon$.
\end{prop}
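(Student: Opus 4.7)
The plan is to linearize $\varphi$ by passing to logarithms and then exploit the orthogonality between $L$ and $L^\perp$ to decouple the role of $\alpha$ from that of $h$. Setting $H=\log\gamma$, $a=\log\alpha$, $s=\log t$, the equation $\gamma=\varphi(\alpha,t,h)$ becomes the linear identity $H=sh+Va$, where $V$ is the $n\times\ell_1$ matrix whose columns are the basis $v_1,\dots,v_{\ell_1}$ of $L$. Pairing with any $m_r\in L^\perp$ and using $\langle m_r,v_j\rangle=0$ for all $j$ yields the key identity
\[
\log\gamma^{m_r} \;=\; \langle m_r,H\rangle \;=\; s\,\langle m_r,h\rangle,
\]
which depends neither on $\alpha$ nor on the $L$-component of $h$, and reduces the problem to a finite system of scalar inequalities.

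For the inclusion $\subseteq$ in the open statement, take $\gamma=\varphi(\alpha,t,h)$ with $t\in(0,t_0)$ and $h\in\mathcal{C}_\varepsilon$; the intended range is $t_0\le 1$, so that $s=\log t<\log t_0\le 0$. Combined with $\langle m_r,h\rangle>\varepsilon_r>0$, multiplying these (negative $\times$ positive) inequalities in the right order gives $s\langle m_r,h\rangle<\varepsilon_r\log t_0$, so the key identity delivers $\log\gamma^{m_r}<\log t_0^{\varepsilon_r}$, i.e.\ $\gamma^{m_r}<t_0^{\varepsilon_r}$.

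For the reverse inclusion the strategy is to exhibit an explicit preimage. Given $\gamma$ with $\gamma^{m_r}<t_0^{\varepsilon_r}$ for every $r$, continuity of $t\mapsto t^{\varepsilon_r}$ lets us pick $t\in(0,t_0)$ close enough to $t_0$ so that the finitely many strict inequalities $\gamma^{m_r}<t^{\varepsilon_r}$ still hold. Then set $\alpha=(1,\dots,1)$ and $h:=(1/\log t)\log\gamma\in\R^n$. Dividing $\log\gamma^{m_r}<\varepsilon_r\log t$ by the negative number $\log t$ flips the sign to $\langle m_r,h\rangle>\varepsilon_r$, so $h\in\mathcal{C}_\varepsilon$, and a direct check gives $\varphi(\alpha,t,h)=(t^{h_i})_i=e^H=\gamma$. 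The closed version follows by the same argument with non-strict inequalities, taking $t=t_0$ and $h$ satisfying $\langle m_r,h\rangle\ge\varepsilon_r$.

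The main obstacle here is conceptual rather than technical: one must carefully track the sign of $\log t$ and remember that multiplying by it reverses inequalities in a consistent way across all $r$, which is what forces $t_0$ to lie in the range $(0,1]$ for the stated characterization of the image to hold. Once the logarithmic reformulation is in place and the identity $\log\gamma^{m_r}=s\langle m_r,h\rangle$ is isolated, both inclusions reduce to a straightforward comparison of scalar inequalities, and the construction of an explicit preimage requires essentially no choice beyond taking $\alpha=(1,\dots,1)$.
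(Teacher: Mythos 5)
Your proof is correct and follows essentially the same route as the paper: linearize via logarithms, use the orthogonality $\langle m_r, v_j\rangle=0$ to obtain the key identity $\gamma^{m_r}=t^{\langle m_r,h\rangle}$, and note (as the paper does implicitly by assuming $t_0<1$) that the sign of $\log t$ is what drives the inequalities. Your surjectivity step is marginally more explicit than the paper's --- you write down the preimage $\alpha=(1,\dots,1)$, $h=\log\gamma/\log t$ directly, whereas the paper first characterizes the full image of $\varphi$ using that the $m_r$ generate $L^\perp$ --- but this is the same argument in substance.
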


\begin{proof}
We first prove that a positive vector $\gamma$ is of the form $\gamma =\varphi(\alpha,t,h)$  if and only if
\[ \gamma^{m_r} = t^{\langle m_r, h \rangle},  \quad r=1, \dots, \ell.\]
The only if part is straightforward, taking into account that 
we are assuming that for any $r,j$ it holds that $\langle m_r, v_j\rangle =0$:
\[ \varphi(\alpha,t,h)^{m_r}\, = \, t^{\langle m_r, h \rangle} \,  \prod_{j=1}^{\ell_1} \alpha_j^{\langle m_r, v_j\rangle} 
 \, = \, t^{\langle m_r, h \rangle}.\]
On the other side, if $\gamma^{m_r} = t^{\langle m_r, h \rangle}$  for any $r=1, \dots, \ell$, then the vector 
$$\gamma_{t,h}=(\gamma_1 \, t^{-h_1}, \dots, \gamma_n \, t^{-h_n})$$
verifies that 
$ \gamma_{t,h}^m \, = \, 1, \,  \text{ for any }  m \in L^\perp.$
Thus, taking coordinatewise logarithms, we get that
\[ \langle m , {\rm log}( \gamma_{t,h}) \rangle \, = \, 0 \, \text{ for any }  m \in L^\perp,\]
which means that ${\rm log}( \gamma_{t,h}) \in L$.  Then, there exist real constants $\lambda_1, \dots, \lambda_\ell$ 
such that ${\rm log}( \gamma_{t,h}) = \sum_{j=1}^\ell \lambda_j \, v_j$. Calling $\alpha\in \R_{>0}^\ell$ 
the vector with coordinates $\alpha_j = e^{\lambda_j}$ we get that $\gamma= \varphi(\alpha,t,h)$, as wanted.

Now, assuming $0 < t < t_0 < 1$ and $\langle m_r, h \rangle > \varepsilon_r$ for all $r = 1 \dots, \ell$, 
we have that $t^{\langle m_r, h \rangle} < t_0^{\varepsilon_r}$ and moreover 
$(0, t_0^{\varepsilon_r}) = \{ t^{\langle m_r, h \rangle},  0 < t < t_0, h \in
{\mathcal C}_\varepsilon\}$, which proves both containments. The other equality follows immediately.
\end{proof} 
 
We now present the main result of this section.
 
 \begin{thm}\label{thm:main2}
 Consider a set $\mathcal{A}=\{a_1, \dots, a_{n}\}$ of $n$ points in $\R^d$ and 
 a matrix $C=(c_{i,j}) \in \R^{d \times n}$.  Assume  there are $p$ $d$-simplices $\Delta_1, \dots, \Delta_p$ contained in
 $\mathcal{A}$, which are part of a regular subdivision of $\mathcal A$
and are positively decorated by a $d \times n$ matrix $C$.

Let $m_1\dots,m_\ell \in \R^n$ be vectors that define a presentation of the cone   ${\mathcal C}_{\Delta_1,\dots, \Delta_p}$ 
of all height vectors $h \in \R^n$ that induce a regular subdivision of $\mA$ 
 containing $\Delta_1, \dots, \Delta_p$: 
 \begin{equation} \label{eq:cone2}
 {\mathcal C}_{\Delta_1,\dots, \Delta_p} \, = \, \{h \in \R^n \, : \, \langle m_r, h \rangle  > 0 , \; r=1,\ldots,\ell\}.
 \end{equation}
Then, for any $\varepsilon \in (0,1)^\ell$ there exists $t_0(\varepsilon) >0$ such that for any $\gamma$ in the open set
\[ U \, = \, \cup_{\varepsilon \in (0,1)^\ell} \,  \{ \gamma \in \R_{>0}^n \, ; \, \gamma^{m_r} < t_0(\varepsilon)^{\varepsilon_r}, \, r=1 \dots,\ell\},\]
 the system 
 \begin{equation}\label{systemgamma}
 \sum_{j=1}^n c_{ij}\, \gamma_{j} \, x^{a_j}=0 , \;  \ i=1,\dots,d,
 \end{equation}
 has at least $p$ nondegenerate solutions in the positive orthant. In particular, given two
  $d$-simplices that share a facet, the system~\eqref{systemgamma} has at least $2$
 nondegenerate positive solutions.
  \end{thm}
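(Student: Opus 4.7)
The plan is to combine the parametrization of Proposition~\ref{prop:cone} with the existence statement of Theorem~\ref{th:BS}. First I would set up the parametrization: by Lemma~\ref{lem:CDeltap}, the vectors $m_1,\dots,m_\ell$ generate $\ker A$, so $L:=(\ker A)^\perp$ is the row span of $A$, which has dimension $d+1$. Taking $\{v_0,v_1,\dots,v_d\}$ to be the rows of $A$, namely $v_0=(1,\dots,1)$ and $v_k=(a_{k,1},\dots,a_{k,n})$ for $k=1,\dots,d$, Proposition~\ref{prop:cone} applies with $\ell_1=d+1$ and gives
\[
\varphi(\alpha_0,\alpha_1,\dots,\alpha_d,\,t,\,h)_j \;=\; \alpha_0\,\alpha_1^{a_{1,j}}\cdots\alpha_d^{a_{d,j}}\,t^{h_j}.
\]

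For $\gamma=\varphi(\alpha_0,\alpha,t,h)$ with $\alpha=(\alpha_1,\dots,\alpha_d)$, substituting into \eqref{systemgamma} and performing the bijective change of variables $y_k=\alpha_k x_k$ on $\R^d_{>0}$, then dividing each equation by $\alpha_0>0$, the system reduces exactly to \eqref{systemwitht}. This diffeomorphism preserves non-degeneracy of positive solutions, so \eqref{systemgamma} and \eqref{systemwitht} have the same number of non-degenerate positive solutions. Thus, for $\gamma$ with $\gamma^{m_r}<t_0(\varepsilon)^{\varepsilon_r}$ for all $r$, Proposition~\ref{prop:cone} produces a decomposition with $t\in(0,t_0(\varepsilon))$ and $h\in\mathcal{C}_\varepsilon\subset\mathcal{C}_{\Delta_1,\dots,\Delta_p}$, and it suffices to show that \eqref{systemwitht} has at least $p$ non-degenerate positive solutions for this specific $(t,h)$.

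To define $t_0(\varepsilon)$, I would fix an interior ray $h^0\in\mathcal{C}_{\Delta_1,\dots,\Delta_p}$ (nonempty by Lemma~\ref{lem:CDeltap}), rescale to obtain $h_\varepsilon\in\mathcal{C}_\varepsilon$, and apply Theorem~\ref{th:BS} to $h_\varepsilon$. The hard part will be the uniformity: Theorem~\ref{th:BS} produces a threshold $t_0(h)$ that a priori depends on $h$, and we need $t<t_0(h)$ for the actual $h$ coming out of the decomposition of $\gamma$, not merely for the reference $h_\varepsilon$. I would address this via the scaling identity $t_0(sh)=t_0(h)^{1/s}$ for $s>0$ (which follows because $(t,h)$ and $(t^{1/s},sh)$ give the same coefficients $t^{h_j}$), together with the $L$-invariance of $t_0$ (adding an element of $L$ to $h$ amounts to multiplying equations by powers of $t$ and rescaling the $x_k$); these together reduce the uniformity to a lower bound on a compact slice of the shifted cone $\{w\in L^\perp : \langle m_r,w\rangle\geq\varepsilon_r\}$. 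Alternatively, a direct inspection of the proof of Theorem~\ref{th:BS} in~\cite{bihan} provides an explicit lower bound for $t_0(h)$ in terms of gaps between lifted heights, and these gaps are bounded below by quantities depending only on $\varepsilon$ when $h\in\mathcal{C}_\varepsilon$.
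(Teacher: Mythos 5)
Your proposal follows essentially the same route as the paper's own proof: both decompose $\gamma=\varphi(\alpha,t,h)$ via Proposition~\ref{prop:cone} with $L$ equal to the row span of $A$ (so that $m_1,\dots,m_\ell$ generate $L^\perp=\ker A$ by Lemma~\ref{lem:CDeltap}), reduce \eqref{systemgamma} to the deformed system \eqref{systemwitht} through the monomial change of variables $y_k=\alpha_k x_k$, and then obtain a threshold $t_0(\varepsilon)$ valid uniformly on $\mathcal{C}_\varepsilon$ by combining the local uniformity of the bound in Theorem~\ref{th:BS} (from the proof in \cite{bihan}) with compactness and the rescaling $t^{h}=(t^{s})^{h/s}$. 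The paper packages that last step as compactness of $B\cap\overline{\mathcal C}_\varepsilon$ plus radial scaling rather than your $L$-invariance reduction, but the content is the same, and your alternative of extracting an explicit lower bound for $t_0(h)$ from the lifted-height gaps (which are controlled by $\varepsilon$ on $\mathcal{C}_\varepsilon$) is exactly the mechanism the compactness argument is meant to capture.
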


\begin{proof}
Let $L$ be the linear subspace generated by the rows of the matrix $A$, and let
$v_1, \dots, v_{d+1}$ denote its row vectors, which are a basis of $L$ because we are
assuming that $A$ has rank $d+1$.  With this choice, we  can apply Proposition~\ref{prop:cone} 
 to the cone ${\mathcal C} ={\mathcal C}_{\Delta_1,\dots, \Delta_p}$, by Lemma~\ref{lem:CDeltap}.  Note that the map 
$\varphi: \R_{>0}^{d+1} \times\R_{>0} \times \R^n \to \R^n_{>0}$ equals in this case:
\[ \varphi(\alpha,t,h) \, = \, ( \alpha^{(1 a_1)} \, t^{h_1}, \dots,  \alpha^{(1 a_n)}\, t^{h_n}).\]

We denote by $\overline{\mathcal C}_\varepsilon$ the closure of the cone $\mathcal C_\varepsilon$ defined in~\eqref{eq:cone}.
Let $B$ denote the unit ball in $\R^n$. In the proof of Theorem 3.4 in~\cite{bihan}, and thus in the proof of Theorem~\ref{th:BS}, 
one can see that given any $h$, it is possible to find a positive number $t_0$ for which the conclusion of Theorem~\ref{th:BS} holds 
for any $h'$ close to $h$. As $B \cap \overline{\mathcal C}_\varepsilon$ is compact, there exists  $t_1(\varepsilon) \in (0,1)$ such 
that the conclusion holds for any $ t \in (0, t_1(\varepsilon))$ and any $h \in B \cap \overline{\mathcal C}_\varepsilon$. But if $h \in
\overline{\mathcal C}_\varepsilon$ satisfies $||h|| > 1$, we can write it as $h = ||h|| h'$,
with $h'\in B \cap \overline{\mathcal C}_\varepsilon$. Then, for any $t \in (0,1)$,
$t^h = (t^{||h||})^{h'}$, with $0 < t^{||h||} < t$ and so the conclusion of Theorem~\ref{th:BS} 
holds for any $h \in \overline{\mathcal C}_\varepsilon$ 
provided $t \in (0,t_1(\varepsilon)]$. By Proposition~\ref{prop:cone}, the image by $\varphi$ of
$\R_{>0}^{d+1} \times(0, t_1(\varepsilon)] \times  \overline{\mathcal C}_\varepsilon$ equals
$\{ \gamma \in \R_{>0}^n \, ; \, \gamma^{m_r} \le t_0(\varepsilon)^{\varepsilon_r}, \, r=1 \dots,\ell\}$.   Note also that
${\mathcal C}_{\Delta_1,\dots, \Delta_p} = \cup_\varepsilon  \overline{\mathcal C}_\varepsilon$. 

Observe that if $\gamma=\varphi(\alpha, t,h)$, then for any $j=1, \dots, n$,
\[\gamma_{j}x^{a_j} = \alpha^{(1 a_j)} x^{a_j} = \alpha_1 t^{h_j} y^{a_j},\]
where $y_i = \alpha_{i+1} x_i$ for any $i=1, \dots, d$.
As  all $\alpha_i >0$, 
 system~\eqref{systemgamma} has the same number of positive solutions
as
 \begin{equation}\label{systemgammay}
  \sum_{j=1}^n c_{ij} \, t^{h_j} \, y^{a_j} = 0 , \;  \ i=1,\dots,d,
 \end{equation}
which is at least $p$ for $t \in (0,t_1(\varepsilon)]$.
\end{proof}

\begin{remark} {\rm Theorem~\ref{thm:main2} says that if we choose
$\varepsilon\in (0,1)^\ell$, then there exists positive real numbers
$M_1=M_1(\varepsilon),\dots,M_r=M_r(\varepsilon)$, such that  system~\eqref{systemgamma} has at least $p$ nondegenerate solutions 
in the positive orthant for any vector $\gamma$ in $\R^n_{>0}$ satisfying  \begin{equation}\label{eq:gamma}
\gamma^{m_r} < M_r \quad \mbox{for} \ r=1,\ldots,\ell.
  \end{equation}
  We have to remark that the choice of the positive constants $M_1, \dots, M_r$ is {\it not} algorithmic,
 but our result makes clear that there is an open set in coefficient space for which many positive solutions can be found, and 
 inequalities~\eqref{eq:gamma} 
 indicate ``in which directions'' the coefficients have to be scaled in order to get  at least as many positive solutions as the 
 number of decorated simplices. }

\end{remark}

As a first application of Theorems~\ref{th:BS} and~\ref{thm:main2}, we give a proof of Theorem~\ref{th:examplehhk}, 
corresponding to the example of the two component system with Hybrid Histidine Kinase (\ref{networkHHK}) in the Introduction.
 
 \begin{proof}[Proof of Theorem \ref{th:examplehhk}]
  From  $f_2=f_3=f_4=f_5=0$ we get:
 \[x_1=\dfrac{k_4k_5x_4x_5^2}{k_1k_3},\, x_2 = \dfrac{k_4k_5x_4x_5^2}{k_2k_3} + \dfrac{k_5x_4x_5}{k_2},\, x_3= \dfrac{k_5x_4x_5}{k_3},\,
 x_6=\dfrac{k_4k_5x_4x_5^2}{k_3k_6} + \dfrac{k_5x_4x_5}{k_6}.\]
 
Then, at steady state, the concentrations of the species can be obtained from the values of $x_4$ and $x_5$. If we replace these expressions 
 into the conservation laws~\eqref{eq:conservationlawsnetworkHHK}, we get the equations:
 \begin{align*}
 \dfrac{k_4k_5x_4x_5^2}{k_1k_3} + \dfrac{k_4k_5x_4x_5^2}{k_2k_3} + \dfrac{k_5x_4x_5}{k_2} + \dfrac{k_5x_4x_5}{k_3} + x_4 -  T_1 = &\, 0, \\
 x_5 + \dfrac{k_4k_5x_4x_5^2}{k_3k_6} + \dfrac{k_5x_4x_5}{k_6} - T_2 = &\, 0.
 \end{align*}
 
 We can write this system in matricial form:
 \[C \begin{pmatrix}
 x_4 & x_5 & x_4x_5 & x_4x_5^2 & 1
 \end{pmatrix}^t = 0,\]
 where $C\in \R^{2\times 5}$ is the coefficient matrix:
 \[C = \begin{pmatrix}
 1 & 0 & C_{13} & C_{14} & -T_1 \\
 0 & 1 & C_{23} & C_{24} & -T_2
 \end{pmatrix},\]
 and $C_{13}=k_5\left(\frac{1}{k_2} + \frac{1}{k_3}\right)$, $C_{14}=\frac{k_4k_5}{k_3}
 \left(\frac{1}{k_1} + \frac{1}{k_2}\right)$, $C_{23}=\frac{k_5}{k_6}$ and $C_{24}= \frac{k_4k_5}{k_3k_6}$.
If we order the variables $(x_4,x_5)$ the support of this system is:
 \[ \mathcal{A}=\{(1,0),(0,1), (1,1), (1,2),(0,0)\} .\]
 We depict in Figure \ref{fig:regulartriangulationHHK} the $2$-simplices $\Delta_1=\{(1,0), (1,1), (0,0) \}$, $\Delta_2=\{(1,1), (1,2),(0,0) \}$ and 
 $\Delta_3=\{(0,1), (1,2),(0,0) \}$, which form a 
 regular triangulation $\Gamma$ of $\mathcal{A}$, associated for instance with any height function $h:\mA \to \R$
  satisfying $h(1,0)=h_1, h(0,1)=h_2$, $h(1,1)=0$, $ h(1,2)=0$, and $h(0,0)=0$, with $h_1, h_2>0$.

 \begin{figure}[h]
 \centering
 \begin{tikzpicture}%
 	[scale=1.500000,
 	back/.style={loosely dotted, thin},
 	edge/.style={color=black, thick},
 	facet/.style={fill=red!70!white,fill opacity=0.800000},
 	vertex/.style={inner sep=1pt,circle,draw=black,fill=black,thick,anchor=base
 }]
 
 %
 %
 \coordinate (0.00000, 0.00000) at (0.00000, 0.00000);
 \coordinate (0.00000, 1.00000) at (0.00000, 1.00000);
 \coordinate (1.00000, 1.00000) at (1.00000, 1.00000);
 \coordinate (1.00000, 0.00000) at (1.00000, 0.00000);
 \coordinate (1.00000, 2.00000) at (1.00000, 2.00000);
 \fill[facet, fill=green!70!white,fill opacity=0.800000] (1.00000, 2.00000) -- (1.00000, 1.00000) -- (0.00000,
 0.00000) -- cycle {};
 \fill[facet, fill=red!50!white,fill opacity=0.800000] (1.00000, 2.00000) -- (0.00000, 1.00000) -- (0.00000,
 0.00000) -- cycle {};
 \fill[facet, fill=blue!70!white,fill opacity=0.800000] (1.00000, 0.00000) -- (1.00000, 1.00000) -- (0.00000,
 0.00000) -- cycle {};
 \draw[edge] (0.00000, 0.00000) -- (0.00000, 1.00000);
 \draw[edge] (0.00000, 0.00000) -- (1.00000, 1.00000);
 \draw[edge] (0.00000, 0.00000) -- (1.00000, 2.00000);
 \draw[edge] (0.00000, 0.00000) -- (1.00000, 0.00000);
 \draw[edge] (0.00000, 1.00000) -- (1.00000, 2.00000);
 \draw[edge] (1.00000, 0.00000) -- (1.00000, 2.00000);
 
 \node[vertex] at (0.00000, 0.00000){};
 \node[vertex] at (0.00000, 1.00000){};
 \node[vertex] at (1.00000, 0.00000){};
 \node[vertex] at (1.00000, 2.00000){};
 \node[vertex] at (1.00000, 1.00000){};
 \node at (-0.35000, 0.00000){(0,0)};
 \node at (-0.35000, 1.00000){(0,1)};
 \node at (1.35000, 0.00000){(1,0)};
 \node at (1.35000, 1.00000){(1,1)};
 \node at (1.35000, 2.00000){(1,2)};
 \end{tikzpicture}
 \hspace{1.5cm}
 \begin{tikzpicture}%
 	[x={(0.795303cm, -0.371152cm)},
 	y={(0.605733cm, 0.517961cm)},
 	z={(-0.024093cm, 0.770689cm)},
 	scale=1.300000,
 	back/.style={dotted,very thick},
 	edge/.style={color=black, thick},
 	facet/.style={fill=blue!95!black,fill opacity=0.800000},
 	vertex/.style={inner sep=1pt,circle,draw=black,fill=black,thick,anchor=base}]
 
 %
 %
 \coordinate (0.00000, 0.00000, 0.00000) at (0.00000, 0.00000, 0.00000);
 \coordinate (0.00000, 1.00000, 0.00000) at (0.00000, 1.00000, 0.00000);
 \coordinate (0.00000, 1.00000, 2.00000) at (0.00000, 1.00000, 2.00000);
 \coordinate (1.00000, 0.00000, 0.00000) at (1.00000, 0.00000, 0.00000);
 \coordinate (1.00000, 0.00000, 1.00000) at (1.00000, 0.00000, 1.00000);
 \coordinate (1.00000, 1.00000, 0.00000) at (1.00000, 1.00000, 0.00000);
 \coordinate (1.00000, 2.00000, 0.00000) at (1.00000, 2.00000, 0.00000);
 
 \node[vertex] at (0.00000, 1.00000, 0.00000)     {};
 
 \fill[facet, fill=green!70!white,fill opacity=0.800000] (1.00000, 2.00000, 0.00000) -- (1.00000, 1.00000, 0.00000) -- (0.00000,
 0.00000, 0.00000) -- cycle {};
 \fill[facet, fill=white,fill opacity=0.800000] (1.00000, 2.00000, 0.00000) -- (0.00000, 1.00000, 0.00000) -- (0.00000,
 0.00000, 0.00000) -- cycle {};
 \fill[facet, fill=white,fill opacity=0.800000] (1.00000, 0.00000, 0.00000) -- (1.00000, 1.00000, 0.00000) -- (0.00000,
 0.00000, 0.00000) -- cycle {};
 \fill[facet, fill=red!50!white,fill opacity=0.800000] (0.00000, 1.00000, 2.00000) -- (1.00000, 2.00000, 0.00000) -- (0.00000,
 0.00000, 0.00000) -- cycle {};
 \draw[edge] (0.00000, 0.00000, 0.00000) -- (1.00000, 2.00000, 0.00000);
 \fill[facet, fill=blue!70!white,fill opacity=0.800000] (1.00000, 0.00000, 1.00000) -- (1.00000, 1.00000, 0.00000) -- (0.00000,
 0.00000, 0.00000) -- cycle {};

 \draw[edge,back] (0.00000, 0.00000, 0.00000) -- (0.00000, 1.00000, 0.00000);
 \draw[edge,back] (0.00000, 1.00000, 0.00000) -- (0.00000, 1.00000, 2.00000);
 \draw[edge,back] (0.00000, 1.00000, 0.00000) -- (1.00000, 2.00000, 0.00000);
 \draw[edge,back] (1.00000, 0.00000, 0.00000) -- (1.00000, 0.00000, 1.00000);
 \draw[edge,back] (1.00000, 0.00000, 0.00000) -- (1.00000, 1.00000, 0.00000);
 \draw[edge,back] (1.00000, 0.00000, 0.00000) -- (1.00000, 0.00000, 1.00000);
 
 \draw[edge,back] (0.00000, 0.00000, 0.00000) -- (1.00000, 0.00000, 0.00000);
 \draw[edge] (0.00000, 0.00000, 0.00000) -- (1.00000, 0.00000, 1.00000);
 \draw[edge] (0.00000, 1.00000, 2.00000) -- (1.00000, 2.00000, 0.00000);
 \draw[edge] (1.00000, 1.00000, 0.00000) -- (1.00000, 2.00000, 0.00000);
 \draw[edge] (1.00000, 0.00000, 1.00000) -- (1.00000, 1.00000, 0.00000);
 \draw[edge] (0.00000, 0.00000, 0.00000) -- (1.00000, 1.00000, 0.00000);
 \draw[edge] (0.00000, 0.00000, 0.00000) -- (0.00000, 1.00000, 2.00000);
 
 \node[vertex] at (0.00000, 0.00000, 0.00000)     {};
 \node[vertex] at (0.00000, 1.00000, 2.00000)     {};
 \node[vertex] at (1.00000, 1.00000, 0.00000)     {};
 \node[vertex] at (1.00000, 0.00000, 1.00000)     {};
 \node[vertex] at (1.00000, 2.00000, 0.00000)     {};
 \node at (0.00000,-0.50000, 0.00000 )     {(0,0,0)};
 \node at (0.00000,1.1000, 2.20000 )     {(0,1,$h_2$)};
 \node at (1.50000,1.0000, 0.0000 )     {(1,1,0)};
 \node at (1.50000,2.0000, 0.0000 )     {(1,2,0)};
 \node at (-0.60000,0.0000, 1.0000 )     {(1,0,$h_1$)};
 
\draw [->,bend right=20, thick](1.00000, 0.00000, 1.05000) to (0.0000,0.1000, 1.2000 ) ;
 \end{tikzpicture}
 \caption{A regular triangulation $\Gamma$ of $\mathcal{A}$.} 
 \label{fig:regulartriangulationHHK}
\end{figure}
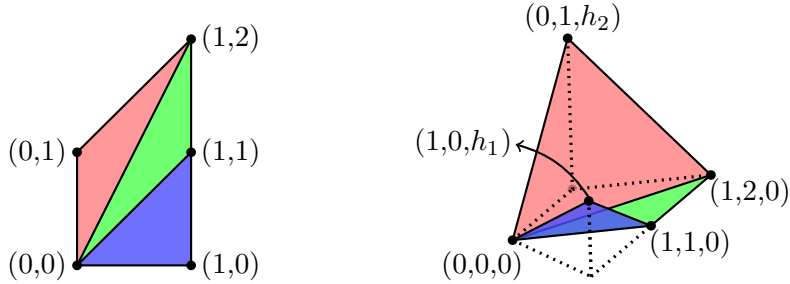

 The simplex $\Delta_1$ is positively decorated by $C$ if and only if 
 \begin{equation}\label{eq:1}
 T_1\,k_2\,k_3 - T_2\,k_2\,k_6 - T_2\,k_3\,k_6 >0,
 \end{equation} and the simplex $\Delta_3$ 
 is positively decorated by $C$ if and only if 
 \begin{equation}\label{eq:2} T_1\,k_1\,k_2 - T_2\,k_1\,k_6 -T_2\,k_2\,k_6 < 0.
 \end{equation} 
 If conditions~\eqref{eq:1} and~\eqref{eq:2} hold, then the simplex $\Delta_2$ is also positively decorated by $C$ if and only if $k_1<k_3$. 
 So, the three simplices are positively decorated by $C$ under the validity of condition (\ref{eq:conditionhhk}):
 $$k_6\left(\dfrac{1}{k_2}+\dfrac{1}{k_3}\right)< \dfrac{T_1}{T_2} <k_6\left(\dfrac{1}{k_1}+\dfrac{1}{k_2}\right).$$ 
 
Assume that both inequalities in~\eqref{eq:conditionhhk} hold.
In this case, Theorem~\ref{th:BS} says that there exists $t_0 \in \R_{>0}$ such that for all $0<t<t_0$, the system
 \vskip -10pt
 \begin{equation}\label{systemwithtej1}
 \begin{aligned}
 t^{h_1}x_4 + C_{13}\, x_4x_5 + C_{14} \, x_4x_5^2    -  T_1 &= & 0, \\
 t^{h_2}x_5  + C_{23} \, x_4x_5 + C_{24} \, x_4x_5^2 - T_2 &=& 0,
 \end{aligned}
 \end{equation}
has at least three positive nondegenerate solutions.
 
 If we make the change of variables: $\bar x_4=t^{h_1}\, x_4$, $\bar x_5=t^{h_2}\, x_5$ we have:
 \vskip -10pt
 \begin{equation}\label{systemwithtej1b}
 \begin{aligned}
 \bar x_4 + t^{-(h_1+h_2)}\,C_{13}\,\bar x_4 \bar x_5 + t^{-(h_1+2h_2)}\,C_{14}\,\bar x_4\bar x_5^2    -  T_1 &= & 0, \\
 \bar x_5  + t^{-(h_1+h_2)}\,C_{23}\,\bar x_4\bar x_5 + t^{-(h_1+2h_2)}\,C_{24}\,\bar x_4\bar x_5^2 - T_2 &=& 0.
 \end{aligned}
 \end{equation}
 
 If we consider the rescalings:
 \[
 \overline{k_4}=t^{-h_2}\,k_4,\qquad \overline{k_5}=t^{-(h_1 + h_2)}\,k_5,
 \]
 and we keep fixed the values of the remaining constants $k_1, k_2, k_3, k_6$ and
  the total concentrations $T_1$, $T_2$, then the  steady states of the dynamical system
  associated with
 the network with these rate and total conservation constants are the solutions of
 the polynomial system (\ref{systemwithtej1b}). And then, for these constants the network has at least three positive steady states. 
 If we take $N_1=t_0^{-h_2}$ and $N_2=t_0^{-h_1}$ and we consider any positive $\beta_4, \beta_5$ satisfying
 $\beta_4>N_1$ and $\frac{\beta_5}{\beta_4}>N_2$, there exist $0<t<t_0$ such that 
  $\beta_4=t^{-h_1}$ and $\beta_5=t^{-(h_1+h_2)}$ and we are done.
 
 Another way to finish the proof of Theorem~\ref{th:examplehhk} is using Theorem~\ref{thm:main2}.
 The inequalities that define the cone $\mathcal{C}_{\Gamma}$ are: $\langle m_1,h \rangle > 0$, 
 $\langle m_2,h \rangle > 0$, where $m_1=(1,0,-2,1,0)$ and $m_2=(0,1,1,-1,-1)$.  
 Fix $\varepsilon\in (0,1)^2$. As (\ref{eq:conditionhhk}) holds, Theorem~\ref{thm:main2} says that there exist
  $M_1=M_1(\varepsilon), M_2=M_2(\varepsilon) > 0$ such that the polynomial system 
 \vskip -10pt
 \begin{equation}\label{systemwithgamma1}
 \begin{aligned}
 \gamma_1\,x_4 + \gamma_3\,C_{13}\,x_4x_5 + \gamma_4\,C_{14}\,x_4x_5^2    -  \gamma_5\,T_1 &= & 0, \\
 \gamma_2\,x_5  + \gamma_3\,C_{23}\,x_4x_5 + \gamma_4\,C_{24}\,x_4x_5^2 - \gamma_5\,T_2 &=& 0,
 \end{aligned}
 \end{equation}
 has at least three nondegenerate positive solutions for any vector $\gamma\in (\R_{>0})^5$ 
 satisfying $\gamma^{m_1}<M_1$ and $\gamma^{m_2}<M_2$. 
 In particular, this holds if we take $\gamma_1=\gamma_2=\gamma_5=1$, and $\gamma_3$ and $\gamma_4$ satisfy:
 \begin{equation}\label{eq:3}
 \gamma_3^{-2}\gamma_4 < M_1,\qquad \gamma_3\gamma_4^{-1} < M_2.\end{equation}
  If we call $\beta_4=\frac{\gamma_4}{\gamma_5}$, $\beta_5=\gamma_5$,  
  $N_1=\frac{1}{M_1}$, $N_2=\frac{1}{M_2}$, the inequalities in~\eqref{eq:3}
   are equivalent to $\beta_4 > N_1$ and 
 $\frac{\beta_5}{\beta_4} > N_2$. Then, if $\beta_4$ and $\beta_5$ satisfy these bounds, rescaling of the given parameters 
 $k_4, k_5$ by $\overline{k_4}=\beta_4 \, k_4, \overline{k_5}=\beta_5 \, k_5$, gives raise to a 
 multistationary dynamical system, as we claimed.
 
 \end{proof}

\section{The mixed approach}\label{sec:2}
In this section, we present a similar but different approach to Theorems~\ref{th:BS} and~\ref{thm:main2}. 
As the polynomials  $f_1, \dots, f_d$ 
might have different supports $\mathcal A_1, \dots, \mathcal A_d \subset \Z^d$,
 one usually takes the union of the supports $\mathcal A = \cup_{i=1}^d \mA_i$.
That is, we can write the polynomial system
 \begin{equation}\label{systemwithtcayley}
 f_{i}(x)=\sum_{a_j\in \mA_i} c_{i,j}\, x^{a_j} \in \R[x_1,\dots,x_d], \ i=1,\dots,d
 \end{equation} 
in the form
 \begin{equation*}
 f_{i}(x)=\sum_{a_j \in \mA}  c_{i,j}\, x^{a_j} \in \R[x_1,\dots,x_d], \ i=1,\dots,d, 
 \end{equation*}
where $c_{i,j}=0$ in case $a_j \notin \mathcal A_i$.

If one takes the same height function from $\mA$ to $\R$ for each equation,  the coefficient matrix 
$C$ might have many zero minors. Thus, we now allow
different height functions $h^{(i)}: \mA_i \to \R$, $i=1,\ldots,d$. Instead of considering regular subdivisions of $\mA$, 
we will consider \textit{regular mixed subdivisions} of the  Minkowski sum ${\mathcal M}=\sum_{i=1}^d \mA_i$
defined by height functions $h^{(i)}: \mA_i \to \R$, $i=1,\ldots,d$.
The projection of the lifted points in each of the faces of the lower convex hull of the Minkowski sum 
$\sum_{i=1}^d {\mA}^{h^{(i)}}$  of the lifted point sets ${\mA}^{h^{(i)}} \subset \R^{d+1}$ defines the associated regular mixed 
subdivision $S_h$ of $\mathcal M$.  The convex hull of the cells in $S_h$ do not intersect or the 
intersection is a common face.  
 
Regular mixed subdivisions of ${\mathcal M}$ 
 are in bijection with regular subdivisions of the associated \textit{Cayley configuration}  $C(\mA_1, \dots, \mA_d) $.
This is  the lattice configuration in $\Z^{d}\times\Z^{d}$ defined by
 \begin{equation}\label{E:Cayley}
 C({\mathcal A_1},\ldots,\mathcal A_d)\, = \, (\mathcal A_1 \times \{e_1\}) \cup  \cdots
 ({\mathcal A_{d-1}} \times \{e_{d-1}\}) \cup ({\mathcal A_d} \times \{e_d\}),
 \end{equation}
 where $e_1,\dots,e_d$ denotes the canonical basis in $\Z^{d}$. 
 This is the support of the Cayley polynomial: 
 \begin{equation*}
 F(x,y)=\sum_{i=1}^{d} y_if_i(x), 
 \end{equation*}
 in variables 
 $(x_1,\ldots,x_d,y_1,\ldots,y_{d})$,
 associated with polynomials $f_i(x)$ with support in $\mathcal A_i$, $i=1,\dots,d$.
 
 Note that the
 sum of the last $d$ coordinates of any point in $ C({\mathcal A_1},\ldots,\mathcal A_d)$ equals $1$, 
 so the maximal dimension of a simplex in the Cayley configuration is $2d-1$ and then this simplex consists of $2d$ points.
 We will assume that $ C({\mathcal A_1},\ldots,\mathcal A_d)$  contains a $(2d-1)$-simplex.
  
A tuple of height functions $(h^{(1)}, \dots, h^{(d)})$ as above can be identified with
a height function $h: C({\mathcal A_1},\ldots,\mathcal A_d) \to \R$,
defining $h(a_j, e_i) = h^{(i)} (a_j), i=1, \dots, d$.  In case $\Delta$  is a $(2d-1)$-simplex in the associated regular 
subdivision $\Gamma_h$ of $C({\mathcal A_1},\ldots,\mathcal A_d)$, necessarily $\Delta$ contains at least one point $(a_j, e_i)$
in each $\mA_i$.
The corresponding maximal cell in the associated regular subdivision $S_h$ 
of $\mathcal M$ consists of all points of the form $b_1+\dots +b_d$ with $(b_i,e_i)$ in $\Delta$. 
For more details about the translation between regular subdivisions of $C(\mA_1, \dots, \mA_d)$ and 
regular mixed subdivisions of $\mathcal M$, we refer to Section 9.2 in \cite{triangulations}. 
We show this correspondence in Example~\ref{ej:mixed} below.
 
 \begin{defi}  A $(2d-1)$-simplex $\Delta$ in the Cayley configuration $C({\mathcal A_1},\ldots,\mathcal A_d)$ 
 is said to be mixed if it consists of two points $(a_{j_1},e_i),(a_{j_2},e_i)$ for each $i=1,\dots,d$, 
 with $a_{j_1},a_{j_2} \in \mathcal{A}_i$. A mixed simplex $\Delta$ is said to be
positively decorated by $C$ if for each $i=1,\dots, d$, the coefficients of the polynomial $f_i$ corresponding to the monomials $a_{j_1}$ 
and $a_{j_2}$ have different signs, that is, if $c_{i,j_1}c_{i,j_2}<0$.
 \end{defi}

Let  $\Gamma$ be a regular subdivision of the Cayley configuration  $C({\mathcal A_1},\ldots,\mathcal A_d)$. Let $h$ be a height vector
that induces $\Gamma$ and denote by $h^{(1)},\dots,h^{(d)}$ the real vectors of size equal to the cardinality of
$\mathcal A_i$, such that $h^{(i)}(a_j)=h(a_j, e_i)$, for $i=1,\dots,d$, and $a_j\in\mathcal{A}_i$.
Consider the family of polynomial systems parametrized by a positive real  number $t$:
\begin{equation} \label{eq:mixed}
 f_{i,t}(x)=\sum_{a_j \in \mA_i} c_{i,j}\, t^{h^{(i)}(a_j)} \, x^{a_j} \in \R[x_1,\dots,x_d], \ i=1,\dots,d, \ t>0.
 \end{equation}

We then have:

 \begin{thm}\label{thm:mixed} Let $\mathcal{A}_1,\dots, \mathcal{A}_d$ be finite sets in $\Z^d$. 
 Assume there are $p$ mixed $(2d-1)$-simplices $\Delta_1, \dots, \Delta_p$
 which occur in a regular subdivision $\Gamma$ of  $C({\mathcal A_1},\ldots,\mathcal A_d)$ and which are positively decorated by a 
 matrix $C  \in \R^{d \times n}$. Let $h$ be a height function inducing $\Gamma$ and $h^{(i)}, i=1,\dots,d$, defined as before. 
Then, there exists $t_0\in\R_{>0}$ 
 such that for all $0<t<t_0$, the number of (nondegenerate) solutions of (\ref{eq:mixed}) 
contained in the positive orthant is at least $p$.
  In particular, the result holds if $\Delta_1, \Delta_2$ are two mixed $(2d-1)$-simplices of $C({\mathcal A_1},\ldots,\mathcal A_d)$ which share a facet.
 \end{thm}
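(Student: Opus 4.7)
The plan is to adapt the Viro-style deformation argument behind Theorem~\ref{th:BS} to the mixed setting, where each polynomial $f_i$ is rescaled by its own height function $h^{(i)}$. Fix a positively decorated mixed $(2d-1)$-simplex $\Delta_k$ of $\Gamma$, with vertices in level $i$ given by $(a_{p_{i,k}},e_i)$ and $(a_{q_{i,k}},e_i)$ for $i=1,\dots,d$. The associated restricted system is the binomial system
\begin{equation*}
c_{i,p_{i,k}}\, x^{a_{p_{i,k}}}+c_{i,q_{i,k}}\, x^{a_{q_{i,k}}}=0,\qquad i=1,\dots,d.
\end{equation*}
The $d$ exponent differences $a_{q_{i,k}}-a_{p_{i,k}}$ are $\R$-linearly independent: any linear relation $\sum_i c_i(a_{q_{i,k}}-a_{p_{i,k}})=0$ would produce the combination $\sum_i c_i(a_{q_{i,k}},e_i)-\sum_i c_i(a_{p_{i,k}},e_i)=0$ in $\R^{2d}$, a nontrivial affine dependence among the $2d$ vertices of $\Delta_k$. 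Combined with the positive decoration condition $c_{i,p_{i,k}}c_{i,q_{i,k}}<0$, which makes each $-c_{i,p_{i,k}}/c_{i,q_{i,k}}$ positive, this yields a unique nondegenerate positive solution $\bar x^{(k)}\in\R_{>0}^d$ of the binomial subsystem.

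Next I would invoke the Cayley correspondence between $\Gamma$ and the regular mixed subdivision $S_h$ of $\mathcal{M}$: to the mixed maximal cell dual to $\Delta_k$ there is associated a vector $\omega_k\in\R^d$ such that, for each $i$, the affine function $a\mapsto h^{(i)}(a)+\langle\omega_k,a\rangle$ attains its minimum on $\mathcal A_i$ precisely at $a_{p_{i,k}}$ and $a_{q_{i,k}}$, with common value $-\mu_{i,k}$ for some $\mu_{i,k}\in\R$. Substituting $x_j=t^{(\omega_k)_j}y_j$ in the deformed system~\eqref{eq:mixed} and multiplying its $i$-th equation by $t^{\mu_{i,k}}$, the monomials indexed by $a_{p_{i,k}}$ and $a_{q_{i,k}}$ retain their original coefficients, while every other term of $f_i$ acquires a strictly positive power of $t$. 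Hence the rescaled system converges on compact subsets of $\R_{>0}^d$ to the binomial system of the previous paragraph as $t\to 0^+$, and the implicit function theorem, applied at the nondegenerate root $\bar x^{(k)}$, produces a branch $y^{(k)}(t)\to\bar x^{(k)}$ of positive nondegenerate solutions for $t\in(0,t_{0,k})$.

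Pulling back, the points $x^{(k)}(t)=\bigl(t^{(\omega_k)_j}y^{(k)}_j(t)\bigr)_j$ are positive nondegenerate solutions of~\eqref{eq:mixed}; since distinct maximal cells of $S_h$ correspond to distinct $\omega_k$, the $p$ families have pairwise different asymptotic rates as $t\to 0^+$ and therefore lie in pairwise disjoint subsets of $\R_{>0}^d$ for $t$ sufficiently small, giving the claimed lower bound $p$ with $t_0=\min_k t_{0,k}$ (further shrunk if needed for separation). The final assertion about two mixed $(2d-1)$-simplices sharing a facet follows from the Cayley analogue of Proposition~\ref{prop:2}: such a pair forms a circuit in $C(\mathcal A_1,\dots,\mathcal A_d)$, so both simplices belong to a common regular triangulation, and the main result applies with $p=2$. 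The main obstacle I expect is precisely the separation step: one must bookkeep the Cayley duality carefully to ensure the $\omega_k$ are genuinely distinct and that the resulting approximate solutions do not collide in the positive orthant as $t\to 0^+$. This is exactly the Viro-type argument of~\cite{bihan} transposed to per-equation heights, but it is what makes the estimate $\ge p$ (rather than merely $\ge 1$) nontrivial.
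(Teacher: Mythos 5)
Your proposal is correct and follows essentially the same route as the paper: reduce each positively decorated mixed simplex to a binomial system whose exponent-difference matrix is invertible by maximality of the simplex (hence a unique nondegenerate positive root), and then run the Viro-type deformation and separation argument of Theorem~3.4 of \cite{bihan}. The paper simply defers that second step to \cite{bihan}, whereas you spell out the mixed-cell normal vectors $\omega_k$ and the disjointness of the resulting branches explicitly; the content is the same.
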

 
 \begin{proof} Let $\Delta$ be a mixed $(2d-1)$-simplex in $\Gamma$. Then it consists of $2d$ points: two points $(a_{j_1},e_i),(a_{j_2},e_i)$ 
 for each $i=1,\dots,d$, with $a_{j_1}, a_{j_2} \in \mathcal{A}_i$. Consider the system (\ref{systemwithtcayley})
  restricted to the binomials with exponents  $a_{j_1},a_{j_2}$ in each $f_i$. 
 When $\Delta$ is positively decorated by $C$, we get a binomial system of equations equal to zero with coefficients of opposite signs:
 \[  c_{i,j_1} \, x^{a_{j_1}} +  c_{i,j_2} \,x^{a_{j_2}} \, = \, 0, \quad i=1, \dots, d.\]
 The positive solutions of this  binomial system are in correspondence with the solutions of a system of a form: 
 \[x^M=\beta,\]
 where $M\in \R^{d\times d}$ is the matrix with $i$-th 
 row equal to $a_{j_1}-a_{j_2}$ for each $i=1,\dots,d$, and $\beta = - \frac{c_{i,j_2}}{c_{i,j_1}} \in \R^d_{>0}$. Taking logarithms, we obtain the equivalent linear
system:
\begin{equation}\label{eq:M}
M^t\log(x) = \log(\beta),
\end{equation} 
where $\log(x)=(\log(x_1),\dots,\log(x_d))$. As $\Delta$ is a maximal simplex, the matrix $M$ is invertible. 
Then, the linear system~\eqref{eq:M} has a solution, and thus the binomial system has a positive solution.
Therefore, for each positively decorated simplex $\Delta$ with vertices $(a_{j_1},e_i),(a_{j_2},e_i)$, for each $i$, 
system (\ref{systemwithtcayley}) restricted to the monomials with exponents $a_{j_1}$,$a_{j_2}$ in each $f_i$ has a solution in $\R_{>0}^d$. 
The rest of the proof follows from the arguments in the proof of Theorem 3.4 in \cite{bihan}.
  \end{proof}

Furthermore, with  a similar proof as Theorem \ref{thm:main2}, we have:

 \begin{thm}\label{thm:mixedgamma}
Let $\mathcal{A}_1,\dots, \mathcal{A}_d$ be finite sets in $\Z^d$. Assume  there exist $p$ mixed $(2d-1)$-simplices $\Delta_1, \dots, \Delta_p$  in
$C({\mathcal A_1},\ldots,\mathcal A_d)$, which are part of a regular subdivision  (for instance, when $p=2$ and the two simplices share a facet)
and are positively decorated by $C$.
 Set $N=|\mathcal A_1|+\dots |\mathcal A_d|$. Assume that the cone ${\mathcal C}_{\Delta_1,\dots, \Delta_p}$ of all
 height vectors $h$ inducing regular subdivisions of $C({\mathcal A_1},\ldots,\mathcal A_d)$ containing $\Delta_1, \dots, \Delta_p$ is defined by
 \begin{equation} \label{E:coneformixed} {\mathcal C}_{\Delta_1,\dots, \Delta_p} \, = \, \{ h \in \R^N \, : \, 
 \langle m_r, h \rangle  > 0 , \; r=1,\ldots,\ell\},
 \end{equation}
 where $m_r=(m_{r,1},\ldots,m_{r,N}) \in \R^N$.

Then, for any $\varepsilon \in (0,1)^\ell$ there exists $t_0(\varepsilon) >0$ such that for any $\gamma$ in the set
\[ U \, = \, \cup_{\varepsilon \in (0,1)^\ell} \,  \{ \gamma=(\gamma^1,\dots,\gamma^d) \in \R_{>0}^N, ;
 \, \gamma^{m_r} \le t_0(\varepsilon)^{\varepsilon_r}, \, r=1 \dots,\ell\},\] the system 
 \begin{equation*}
 \sum_{a_j\in \mA_i} c_{ij}\gamma^i_{j}\, x^{a_j}=0 , \;  \ i=1,\dots,d,
 \end{equation*}
 has at least $p$ nondegenerate solutions in the positive orthant, where $\gamma^i$ is a vector of size 
 $|\mA_i|$ with coordinates $\gamma^i_{j}$, with $a_j\in \mA_i$.
  \end{thm}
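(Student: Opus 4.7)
The plan is to mirror the proof of Theorem~\ref{thm:main2}, with Theorem~\ref{thm:mixed} replacing Theorem~\ref{th:BS} and the linear algebra of the Cayley configuration $C(\mA_1,\ldots,\mA_d)$ replacing that of the single support $\mA$. First I would set up the appropriate ambient subspace: let $L \subset \R^N$ be the row span of the Cayley matrix. A convenient basis of $L$ consists of the $d$ block--indicator vectors $v_i$ (with $v_i(i',j) = \delta_{i,i'}$) together with the $d$ coordinate functions $w_k$ (with $w_k(i,j) = a_{j,k}$); the assumption that $C(\mA_1,\ldots,\mA_d)$ contains a $(2d-1)$-simplex ensures that these $2d$ vectors are linearly independent and hence form a basis of $L$. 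The Cayley analogue of Lemma~\ref{lem:CDeltap}---whose proof uses only linear algebra and transfers verbatim---then shows that the vectors $m_1,\ldots,m_\ell$ defining ${\mathcal C}_{\Delta_1,\ldots,\Delta_p}$ in \eqref{E:coneformixed} generate $L^\perp$.

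With this linear--algebraic setup in place, Proposition~\ref{prop:cone} applies. The resulting map $\varphi : \R_{>0}^{2d} \times \R_{>0} \times \R^N \to \R_{>0}^N$ takes the concrete form
\[
\varphi(\alpha,\beta,t,h)^i_j \, = \, \alpha_i \, \beta^{a_j} \, t^{h^{(i)}(a_j)},
\]
where $\alpha \in \R_{>0}^d$ corresponds to the block--indicator basis vectors and $\beta \in \R_{>0}^d$ to the coordinate basis vectors. The key observation is that, for $\gamma^i_j = \varphi(\alpha,\beta,t,h)^i_j$, the monomial change of variables $y_k = \beta_k x_k$ combined with scaling the $i$-th equation by $\alpha_i^{-1}$ transforms the system $\sum_{a_j \in \mA_i} c_{ij}\,\gamma^i_j\,x^{a_j} = 0$ into precisely the system \eqref{eq:mixed}. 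Since these operations preserve both positive solutions and nondegeneracy, Theorem~\ref{thm:mixed} yields at least $p$ nondegenerate positive solutions of the $\gamma$--system as soon as $t$ is small enough, for each fixed $h \in {\mathcal C}_{\Delta_1,\ldots,\Delta_p}$.

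The final step is to make the threshold on $t$ uniform over each closed subcone $\overline{\mathcal C}_\varepsilon$, which I would handle by copying verbatim the compactness argument from the proof of Theorem~\ref{thm:main2}: openness in $h$ of the conclusion of Theorem~\ref{thm:mixed}, compactness of $B \cap \overline{\mathcal C}_\varepsilon$, and the rescaling identity $t^h = (t^{\|h\|})^{h/\|h\|}$ together extend a uniform $t_0(\varepsilon) \in (0,1)$ from the unit ball to all of $\overline{\mathcal C}_\varepsilon$. Combined with the image description of Proposition~\ref{prop:cone} applied to $\overline{\mathcal C}_\varepsilon$, this yields exactly the set $U$ in the statement. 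I expect the only genuine obstacle to be bookkeeping---choosing the basis of $L$ so that the change of variables aligns cleanly with the Cayley block structure, and verifying the Cayley analogue of Lemma~\ref{lem:CDeltap}---since the analytic content then coincides with that of Theorem~\ref{thm:main2} and can simply be cited rather than reproved.
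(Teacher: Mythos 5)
Your proposal is correct and follows exactly the route the paper intends: the paper gives no written proof of this theorem beyond the remark that it follows ``with a similar proof as Theorem~\ref{thm:main2}'', and your adaptation — taking $L$ to be the row span of the Cayley matrix with the block-indicator/coordinate basis, noting the $m_r$ generate $L^\perp$ via the Cayley analogue of Lemma~\ref{lem:CDeltap}, applying Proposition~\ref{prop:cone}, absorbing $\alpha$ and $\beta$ by scaling the equations and the monomial change $y_k=\beta_k x_k$, and invoking Theorem~\ref{thm:mixed} plus the compactness argument on $B\cap\overline{\mathcal C}_\varepsilon$ — is precisely that similar proof, with the bookkeeping made explicit.
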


%
%
%

%
%
%



We now present an application of the mixed approach in Theorem~\ref{thm:mixedgamma} 
to the previous example of the two component system with Hybrid Histidine Kinase \eqref{networkHHK}.

\begin{example}\label{ej:mixed}
\rm
 
Recall that we are looking for positive solutions of the system:
 $$C \begin{pmatrix}
 x_4 & x_5 & x_4x_5 & x_4x_5^2 & 1
 \end{pmatrix}^t = 0,$$
 where $C\in \R^{2\times 5}$ is the coefficient matrix:
 $$C = \begin{pmatrix}
 1 & 0 & C_{13} & C_{14} & -T_1 \\
 0 & 1 & C_{23} & C_{24} & -T_2
 \end{pmatrix},$$
 with $C_{13}=k_5\left(\frac{1}{k_2} + \frac{1}{k_3}\right)$, $C_{14}= \frac{k_4k_5}{k_3}
 \left(\frac{1}{k_1} + \frac{1}{k_2}\right)$, $C_{23}=\frac{k_5}{k_6}$ and $C_{24}=\frac{k_4k_5}{k_3k_6}$.
 
 The supports of the first and second polynomials are
  $\mathcal{A}_1=\{(1,0),(1,1), (1,2),(0,0)\}$ and $\mathcal{A}_2=\{(0,1), (1,1), (1,2),(0,0)\}$ respectively. 
  We want to find mixed positively decorated mixed  $3$-simplices of the Cayley configuration 
  $C({\mathcal A_1},\mathcal A_2)$ occuring  in a regular subdivision. As we mentioned, these  mixed $3$-simplices correspond to 
 maximal dimension $2$ mixed cells of the associated mixed subdivision of the Minkowski sum 
 $\mathcal{A}_1+\mathcal{A}_2$ (Fig. \ref{fig:Minkowskisum}).
  
  \begin{figure}[h]
  	
  	\centering
  	\begin{tikzpicture}
  	[scale=0.800000,
  	back/.style={loosely dotted, thin},
  	edge/.style={color=black, thick},
  	facet/.style={fill=red!70!white,fill opacity=0.800000},
  	vertex/.style={inner sep=1.2pt,circle,draw=black,fill=black,thick,anchor=base
  	}]
  	%
  	%
  	\coordinate (0.00000, 0.00000) at (0.00000, 0.00000);
  	\coordinate (1.00000, 1.00000) at (1.00000, 1.00000);
  	\coordinate (1.00000, 0.00000) at (1.00000, 0.00000);
  	\coordinate (1.00000, 2.00000) at (1.00000, 2.00000);
  	
  	\draw[edge] (0.00000, 0.00000) -- (1.00000, 2.00000);
  	\draw[edge] (0.00000, 0.00000) -- (1.00000, 0.00000);
  	\draw[edge] (1.00000, 0.00000) -- (1.00000, 2.00000);
  	
  	\node[vertex] at (0.00000, 0.00000){};
  	\node[vertex] at (1.00000, 0.00000){};
  	\node[vertex] at (1.00000, 2.00000){};
  	\node[vertex] at (1.00000, 1.00000){};
  	\node at (2.50000, 1.000000) { $+$};
  	\node at (0.50000, -0.600000) { $\mathcal{A}_1$};
  	\node at (0.50000, -1.0000){};
  	\end{tikzpicture}
  	\hspace{0.6cm}
  	\begin{tikzpicture}%
  	[scale=0.800000,
  	back/.style={loosely dotted, thin},
  	edge/.style={color=black, thick},
  	facet/.style={fill=red!70!white,fill opacity=0.800000},
  	vertex/.style={inner sep=1.2pt,circle,draw=black,fill=black,thick,anchor=base
  	}]
  	%
  	%
  	\coordinate (0.00000, 0.00000) at (0.00000, 0.00000);
  	\coordinate (1.00000, 1.00000) at (1.00000, 1.00000);
  	\coordinate (0.00000, 1.00000) at (0.00000, 1.00000);
  	\coordinate (1.00000, 2.00000) at (1.00000, 2.00000);
  	
  	\draw[edge] (0.00000, 0.00000) -- (1.00000, 1.00000);
  	\draw[edge] (0.00000, 0.00000) -- (0.00000, 1.00000);
  	\draw[edge] (0.00000, 1.00000) -- (1.00000, 2.00000);
  	\draw[edge] (1.00000, 1.00000) -- (1.00000, 2.00000);
  	
  	\node[vertex] at (0.00000, 0.00000){};
  	\node[vertex] at (0.00000, 1.00000){};
  	\node[vertex] at (1.00000, 2.00000){};
  	\node[vertex] at (1.00000, 1.00000){}; 
  	\node at (2.50000, 1.00000){$=$};
  	\node at (0.50000, -0.60000){{ $\mathcal{A}_2$}};
  	\node at (0.50000, -1.0000){};
  	\end{tikzpicture}
  	\hspace{0.3cm}
  	\begin{tikzpicture}%
  	[scale=0.800000,
  	back/.style={loosely dotted, thin},
  	edge/.style={color=black, thick},
  	facet/.style={fill=red!70!white,fill opacity=0.800000},
  	vertex/.style={inner sep=1.2pt,circle,draw=black,fill=black,thick,anchor=base
  	}]
  	%
  	%
  	\coordinate (0.00000, 0.00000) at (0.00000, 0.00000);
  	\coordinate (0.00000, 1.00000) at (0.00000, 1.00000);
  	\coordinate (1.00000, 1.00000) at (1.00000, 1.00000);
  	\coordinate (1.00000, 0.00000) at (1.00000, 0.00000);
  	\coordinate (1.00000, 2.00000) at (1.00000, 2.00000);
  	\coordinate (1.00000, 3.00000) at (1.00000, 3.00000);
  	\coordinate (2.00000, 1.00000) at (2.00000, 1.00000);
  	\coordinate (2.00000, 2.00000) at (2.00000, 2.00000);
  	\coordinate (2.00000, 3.00000) at (2.00000, 3.00000);
  	\coordinate (2.00000, 4.00000) at (2.00000, 4.00000);
  	\draw[edge] (0.00000, 0.00000) -- (0.00000, 1.00000);
  	\draw[edge] (0.00000, 0.00000) -- (1.00000, 0.00000);
  	\draw[edge] (0.00000, 1.00000) -- (1.00000, 3.00000);
  	\draw[edge] (1.00000, 0.00000) -- (2.00000, 1.00000);
  	\draw[edge] (2.00000, 1.00000) -- (2.00000, 4.00000);
  	\draw[edge] (1.00000, 3.00000) -- (2.00000, 4.00000);
  	
  	\node[vertex] at (0.00000, 0.00000){};
  	\node[vertex] at (0.00000, 1.00000){};
  	\node[vertex] at (1.00000, 0.00000){};
  	\node[vertex] at (1.00000, 2.00000){};
  	\node[vertex] at (1.00000, 1.00000){};
  	\node[vertex] at (1.00000, 3.00000){};
  	\node[vertex]  at (2.00000, 1.00000){};
  	\node[vertex]  at (2.00000, 2.00000){};
  	\node[vertex] at (2.00000, 3.00000){};
  	\node[vertex]  at (2.00000, 4.00000){};
  	\end{tikzpicture}
  	\caption{The Minkowski sum  $\mathcal M =\mathcal{A}_1 + \mathcal{A}_2$.} \label{fig:Minkowskisum}
  \end{figure}
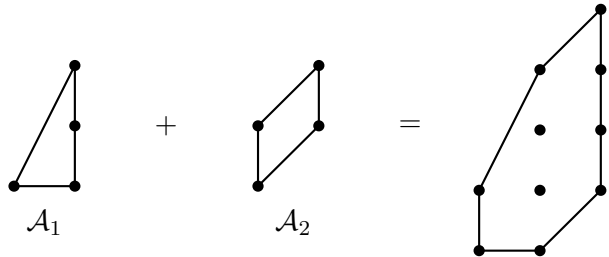
 
 We can choose the following mixed $3$-simplices with vertices in $C({\mathcal A_1},\mathcal A_2)$: 
 \[\Delta_1=\{(0,0,e_1),(1,2,e_1),(0,0,e_2),(0,1,e_2)\},\] \[\Delta_2=
 \{(0,0,e_1),(1,2,e_1),(0,0,e_2),(1,1,e_2)\},\]   \[\Delta_3=\{(0,0,e_1),(1,0,e_1),(0,0,e_2),(1,1,e_2)\},\]
which are positively decorated by $C$. These simplices 
$\Delta_1$, $\Delta_2$ and $\Delta_3$ are in correspondence, respectively, with the mixed cells
 $\sigma_1=\{((0,0) = (0,0)+ (0,0), (1,2)= (1,2)+(0,0), (0,1)=(0,0)+(0,1), (1,3)=(1,2)+(0,1))\},$
$\sigma_2=\{((0,0), (1,2), (1,1), (2,3))\}$ and $\sigma_3=\{((0,0), (1,0), (1,1),(2,1))\}$,  depicted in Figure \ref{fig:mixedsubdivisions}.

   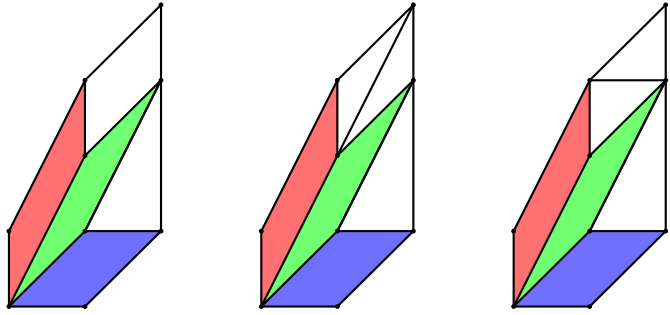
\begin{figure}[h]
 	\centering
  \begin{tikzpicture}%
 	[scale=1.000000,
 	back/.style={loosely dotted, thin},
 	edge/.style={color=black, thick},
 	facet/.style={fill=red!70!white,fill opacity=0.800000},
 	vertex/.style={inner sep=0.2pt,circle,draw=black,fill=black,thick,anchor=base
 }]
 %
 %
 \coordinate (0.00000, 0.00000) at (0.00000, 0.00000);
 \coordinate (0.00000, 1.00000) at (0.00000, 1.00000);
 \coordinate (1.00000, 1.00000) at (1.00000, 1.00000);
 \coordinate (1.00000, 0.00000) at (1.00000, 0.00000);
 \coordinate (1.00000, 2.00000) at (1.00000, 2.00000);
 \coordinate (1.00000, 3.00000) at (1.00000, 3.00000);
 \coordinate (2.00000, 1.00000) at (2.00000, 1.00000);
 \coordinate (2.00000, 2.00000) at (2.00000, 2.00000);
 \coordinate (2.00000, 3.00000) at (2.00000, 3.00000);
 \coordinate (2.00000, 4.00000) at (2.00000, 4.00000);
 \fill[facet, fill=green!70!white,fill opacity=0.800000] (1.00000, 2.00000) --(2.00000, 3.00000) -- (1.00000, 1.00000) -- (0.00000,
 0.00000) -- cycle {};
 \fill[facet, fill=red!70!white,fill opacity=0.800000] (1.00000, 2.00000) -- (1.00000, 3.00000) -- (0.00000, 1.00000) -- (0.00000,
 0.00000) -- cycle {};
 \fill[facet, fill=blue!70!white,fill opacity=0.800000] (1.00000, 0.00000) -- (2.00000, 1.00000) -- (1.00000, 1.00000) -- (0.00000,
 0.00000) -- cycle {};
 \draw[edge] (0.00000, 0.00000) -- (0.00000, 1.00000);
 \draw[edge] (0.00000, 0.00000) -- (1.00000, 1.00000);
 \draw[edge] (0.00000, 0.00000) -- (1.00000, 2.00000);
 \draw[edge] (0.00000, 0.00000) -- (1.00000, 0.00000);
 \draw[edge] (0.00000, 1.00000) -- (1.00000, 3.00000);
 \draw[edge] (1.00000, 0.00000) -- (2.00000, 1.00000);
 \draw[edge] (1.00000, 1.00000) -- (2.00000, 1.00000);
 \draw[edge] (1.00000, 1.00000) -- (2.00000, 3.00000);
 \draw[edge] (1.00000, 1.00000) -- (2.00000, 3.00000);
\draw[edge] (1.00000, 2.00000) -- (2.00000, 3.00000);
  \draw[edge] (2.00000, 1.00000) -- (2.00000, 4.00000);
    \draw[edge] (1.00000, 3.00000) -- (2.00000, 4.00000);
    \draw[edge] (1.00000, 2.00000) -- (1.00000, 3.00000);
 
 \node[vertex] at (0.00000, 0.00000){};
 \node[vertex] at (0.00000, 1.00000){};
 \node[vertex] at (1.00000, 0.00000){};
 \node[vertex] at (1.00000, 2.00000){};
 \node[vertex] at (1.00000, 1.00000){};
 \node[vertex] at (2.00000, 1.00000){};
 \node[vertex] at (2.00000, 3.00000){};
 \node[vertex] at (1.00000, 3.00000){};
 \node[vertex] at (2.00000, 4.00000){};
 \end{tikzpicture}
 \hspace{1.0cm}
 \begin{tikzpicture}%
 [scale=1.000000,
 back/.style={loosely dotted, thin},
 edge/.style={color=black, thick},
 facet/.style={fill=red!70!white,fill opacity=0.800000},
 vertex/.style={inner sep=0.2pt,circle,draw=black,fill=black,thick,anchor=base
 }]
 %
 %
 \coordinate (0.00000, 0.00000) at (0.00000, 0.00000);
 \coordinate (0.00000, 1.00000) at (0.00000, 1.00000);
 \coordinate (1.00000, 1.00000) at (1.00000, 1.00000);
 \coordinate (1.00000, 0.00000) at (1.00000, 0.00000);
 \coordinate (1.00000, 2.00000) at (1.00000, 2.00000);
 \coordinate (1.00000, 3.00000) at (1.00000, 3.00000);
 \coordinate (2.00000, 1.00000) at (2.00000, 1.00000);
 \coordinate (2.00000, 2.00000) at (2.00000, 2.00000);
 \coordinate (2.00000, 3.00000) at (2.00000, 3.00000);
 \coordinate (2.00000, 4.00000) at (2.00000, 4.00000);
 \fill[facet, fill=green!70!white,fill opacity=0.800000] (1.00000, 2.00000) --(2.00000, 3.00000) -- (1.00000, 1.00000) -- (0.00000,
 0.00000) -- cycle {};
 \fill[facet, fill=red!70!white,fill opacity=0.800000] (1.00000, 2.00000) -- (1.00000, 3.00000) -- (0.00000, 1.00000) -- (0.00000,
 0.00000) -- cycle {};
 \fill[facet, fill=blue!70!white,fill opacity=0.800000] (1.00000, 0.00000) -- (2.00000, 1.00000) -- (1.00000, 1.00000) -- (0.00000,
 0.00000) -- cycle {};
 \draw[edge] (0.00000, 0.00000) -- (0.00000, 1.00000);
 \draw[edge] (0.00000, 0.00000) -- (1.00000, 1.00000);
 \draw[edge] (0.00000, 0.00000) -- (1.00000, 2.00000);
 \draw[edge] (0.00000, 0.00000) -- (1.00000, 0.00000);
 \draw[edge] (0.00000, 1.00000) -- (1.00000, 3.00000);
 \draw[edge] (1.00000, 0.00000) -- (2.00000, 1.00000);
 \draw[edge] (1.00000, 1.00000) -- (2.00000, 1.00000);
 \draw[edge] (1.00000, 1.00000) -- (2.00000, 3.00000);
 \draw[edge] (1.00000, 1.00000) -- (2.00000, 3.00000);
 \draw[edge] (1.00000, 2.00000) -- (2.00000, 3.00000);
 \draw[edge] (2.00000, 1.00000) -- (2.00000, 4.00000);
 \draw[edge] (1.00000, 3.00000) -- (2.00000, 4.00000);
 \draw[edge] (1.00000, 2.00000) -- (1.00000, 3.00000);
  \draw[edge] (1.00000, 2.00000) -- (2.00000, 4.00000);
 
 \node[vertex] at (0.00000, 0.00000){};
 \node[vertex] at (0.00000, 1.00000){};
 \node[vertex] at (1.00000, 0.00000){};
 \node[vertex] at (1.00000, 2.00000){};
 \node[vertex] at (1.00000, 1.00000){};
 \node[vertex] at (2.00000, 1.00000){};
 \node[vertex] at (2.00000, 3.00000){};
 \node[vertex] at (1.00000, 3.00000){};
 \node[vertex] at (2.00000, 4.00000){};
 \end{tikzpicture}
  \hspace{1.0cm}
  \begin{tikzpicture}%
 [scale=1.000000,
 back/.style={loosely dotted, thin},
 edge/.style={color=black, thick},
 facet/.style={fill=red!70!white,fill opacity=0.800000},
 vertex/.style={inner sep=0.2pt,circle,draw=black,fill=black,thick,anchor=base
 }]
 %
 %
 \coordinate (0.00000, 0.00000) at (0.00000, 0.00000);
 \coordinate (0.00000, 1.00000) at (0.00000, 1.00000);
 \coordinate (1.00000, 1.00000) at (1.00000, 1.00000);
 \coordinate (1.00000, 0.00000) at (1.00000, 0.00000);
 \coordinate (1.00000, 2.00000) at (1.00000, 2.00000);
 \coordinate (1.00000, 3.00000) at (1.00000, 3.00000);
 \coordinate (2.00000, 1.00000) at (2.00000, 1.00000);
 \coordinate (2.00000, 2.00000) at (2.00000, 2.00000);
 \coordinate (2.00000, 3.00000) at (2.00000, 3.00000);
 \coordinate (2.00000, 4.00000) at (2.00000, 4.00000);
 \fill[facet, fill=green!70!white,fill opacity=0.800000] (1.00000, 2.00000) --(2.00000, 3.00000) -- (1.00000, 1.00000) -- (0.00000,
 0.00000) -- cycle {};
 \fill[facet, fill=red!70!white,fill opacity=0.800000] (1.00000, 2.00000) -- (1.00000, 3.00000) -- (0.00000, 1.00000) -- (0.00000,
 0.00000) -- cycle {};
 \fill[facet, fill=blue!70!white,fill opacity=0.800000] (1.00000, 0.00000) -- (2.00000, 1.00000) -- (1.00000, 1.00000) -- (0.00000,
 0.00000) -- cycle {};
 \draw[edge] (0.00000, 0.00000) -- (0.00000, 1.00000);
 \draw[edge] (0.00000, 0.00000) -- (1.00000, 1.00000);
 \draw[edge] (0.00000, 0.00000) -- (1.00000, 2.00000);
 \draw[edge] (0.00000, 0.00000) -- (1.00000, 0.00000);
 \draw[edge] (0.00000, 1.00000) -- (1.00000, 3.00000);
 \draw[edge] (1.00000, 0.00000) -- (2.00000, 1.00000);
 \draw[edge] (1.00000, 1.00000) -- (2.00000, 1.00000);
 \draw[edge] (1.00000, 1.00000) -- (2.00000, 3.00000);
 \draw[edge] (1.00000, 1.00000) -- (2.00000, 3.00000);
 \draw[edge] (1.00000, 2.00000) -- (2.00000, 3.00000);
 \draw[edge] (2.00000, 1.00000) -- (2.00000, 4.00000);
 \draw[edge] (1.00000, 3.00000) -- (2.00000, 4.00000);
 \draw[edge] (1.00000, 2.00000) -- (1.00000, 3.00000);
 \draw[edge] (2.00000, 3.00000) -- (1.00000, 3.00000);
 
 \node[vertex] at (0.00000, 0.00000){};
 \node[vertex] at (0.00000, 1.00000){};
 \node[vertex] at (1.00000, 0.00000){};
 \node[vertex] at (1.00000, 2.00000){};
 \node[vertex] at (1.00000, 1.00000){};
 \node[vertex] at (2.00000, 1.00000){};
 \node[vertex] at (2.00000, 3.00000){};
 \node[vertex] at (1.00000, 3.00000){};
 \node[vertex] at (2.00000, 4.00000){};
 \end{tikzpicture}
\caption{Three regular mixed subdivisions of  $\mathcal{M}$
 that contain the mixed cells $\sigma_1$, $\sigma_2$ and $\sigma_3$.} \label{fig:mixedsubdivisions}
\end{figure}
The cone $\mathcal{C}_{\Delta_1,\Delta_2,\Delta_3}$ of height vectors $h=(h_1,\dots,h_8)\in\R^{8}$ 
inducing regular subdivisions of $C({\mathcal A_1},\mathcal A_2)$ 
containing $\Delta_1$, $\Delta_2$ and $\Delta_3$ is defined by the inequalities $\langle m_i, h \rangle  > 0$, $i=1,\dots,8$, where
\begin{align*}
&m_1=(1,0,-1,0,2,0,0,-2),\ m_2=(0,1,-1,0,1,0,0,-1),\ m_3=(0,0,-1,1,0,0,1,-1),\\
&m_4=(0,0,-1,1,1,1,0,-2),\ m_5=(1,0,1,-2,0,-2,0,2),\ m_6=(0,1,0,-1,0,-1,0,1),\\
&m_7=(1,0,0,-1,1,-1,0,0),\ m_8=(1,0,0,-1,0,-2,1,1),
\end{align*}
with $h_1=h(1,0,e_1)$, $h_2=h(1,1,e_1)$, $h_3=h(1,2,e_1)$, $h_4=h(0,0,e_1)$, 
$h_5=h(0,1,e_2)$, $h_6=h(1,1,e_2)$, $h_7=h(1,2,e_2)$ and $h_8=h(0,0,e_2)$.

Fix $\varepsilon\in(0,1)^8$. Theorem $\ref{thm:mixedgamma}$ says that there exist 
positive constants $M_i=M_i(\varepsilon)$, $i=1,\dots,8$ such that the number of positive nondegenerate solutions of the polynomial system
 \vskip -10pt
 \begin{equation}\label{systemwithgammamixed}
 \begin{aligned}
 \gamma^1_1\,x_4 + \gamma^1_2\,C_{13}\,x_4x_5 + \gamma^1_3\,C_{14}\,x_4x_5^2    -  \gamma^1_4\,T_1 &=  0, \\
 \gamma^2_1\,x_5  + \gamma^2_2\,C_{23}\,x_4x_5 + \gamma^2_3\,C_{24}\,x_4x_5^2 - \gamma^2_4\,T_2 &= 0,
 \end{aligned}
 \end{equation}
 is at least the number of mixed positively decorated simplices, in this case $3$, for any vector $\gamma=(\gamma^1_1, \gamma^1_2, \gamma^1_3,
 \gamma^1_4, \gamma^2_1, \gamma^2_2, \gamma^2_3, \gamma^2_4)\in \R_{>0}^8$ that satisfies $\gamma^{m_i}<M_i$, for each $i=1,\dots,8$. 
\end{example}
 
In particular we have the following result:

\begin{prop} \label{prop:mixedHK}
Given positive reactions constants $k_1,\dots,k_6$ and positive total conservations constants $T_1$ and $T_2$, there exist positive 
constants $N_1$, $N_2$, $N_3$ and $N_4$ such that for any $\beta_1, \beta_2 >0$ satisfying \[N_1<\beta_1,\quad N_2<\beta_2,
\quad \frac{\beta_2}{\beta_1}<N_3,\quad \frac{\beta_1}{(\beta_2)^2}<N_4,\] the dynamical system corresponding to the  Hybrid Histidine Kinase network (\ref{networkHHK}) 
has at least $3$ positive steady states, after replacing $k_1$ by $\bar k_1=
(\beta_1(\frac{1}{k_1+k_2})-\frac{1}{k_2})^{-1}$ and rescaling $\bar k_6= (\beta_2)^{-1}\, k_6$, 
without altering the value of the other reaction and total conservation constants.     \end{prop}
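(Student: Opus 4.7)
The plan is to apply Theorem~\ref{thm:mixedgamma} exactly as set up in Example~\ref{ej:mixed}, after identifying the prescribed rescaling $k_1\leadsto \bar k_1$, $k_6\leadsto \bar k_6$ with a particular choice of the coefficient vector $\gamma=(\gamma^1,\gamma^2)\in\R^{8}_{>0}$ in the system~\eqref{systemwithgammamixed}. First I would observe that $C_{13}=k_5(1/k_2+1/k_3)$ is independent of $k_1$ and $k_6$, that $\bar k_6=k_6/\beta_2$ transforms $C_{23}$ and $C_{24}$ into $\beta_2\,C_{23}$ and $\beta_2\,C_{24}$, and that the definition of $\bar k_1$ is tailored so that $1/\bar k_1+1/k_2=\beta_1(1/k_1+1/k_2)$, hence the new $C_{14}$ equals $\beta_1\,C_{14}$. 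Thus the steady state equations of the rescaled network coincide with~\eqref{systemwithgammamixed} for
\[
\gamma^1=(1,\,1,\,\beta_1,\,1),\qquad \gamma^2=(1,\,\beta_2,\,\beta_2,\,1).
\]

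Next I would feed this $\gamma$ into the inequalities $\gamma^{m_i}<M_i$ produced by Example~\ref{ej:mixed}. A direct calculation with the eight vectors $m_1,\ldots,m_8$ listed there yields
\[
\gamma^{m_1}=\gamma^{m_2}=\tfrac{1}{\beta_1},\;\; \gamma^{m_3}=\gamma^{m_4}=\tfrac{\beta_2}{\beta_1},\;\; \gamma^{m_5}=\tfrac{\beta_1}{\beta_2^{2}},\;\; \gamma^{m_6}=\gamma^{m_7}=\gamma^{m_8}=\tfrac{1}{\beta_2}.
\]
Setting
\[
N_1=\max(M_1^{-1},M_2^{-1}),\;\; N_2=\max(M_6^{-1},M_7^{-1},M_8^{-1}),\;\; N_3=\min(M_3,M_4),\;\; N_4=M_5,
\]
the four bounds in the statement of the proposition translate exactly into the eight inequalities $\gamma^{m_i}<M_i$, $i=1,\ldots,8$.

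The remaining ingredient is to verify the hypotheses of Theorem~\ref{thm:mixedgamma}: the mixed simplices $\Delta_1,\Delta_2,\Delta_3$ built in Example~\ref{ej:mixed} lie in a common regular subdivision of the Cayley configuration $C(\mA_1,\mA_2)$, since the open cone cut out by the $m_i$ is nonempty (this is precisely what Example~\ref{ej:mixed} established), and each $\Delta_j$ is positively decorated by the coefficient matrix: in every pair of relevant monomials one coefficient is $-T_i<0$ while the other belongs to $\{1,C_{13},C_{14},C_{23},C_{24}\}$, all strictly positive for any choice of positive $k_i$ and $T_j$. This positive decoration is preserved under the rescaling $(\bar k_1,\bar k_6)$ since it only multiplies positive coefficients by positive factors, so Theorem~\ref{thm:mixedgamma} yields at least three nondegenerate positive solutions of the rescaled steady state system, as claimed.

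I do not expect a substantive obstacle: essentially the content of the proof is the bookkeeping already carried out in Example~\ref{ej:mixed}, and the only nonformulaic step is the algebraic inversion defining $\bar k_1$, which is engineered precisely so that the change of coefficients factors out as a single scalar $\beta_1$ multiplying $C_{14}$ while leaving $C_{13}$ untouched.
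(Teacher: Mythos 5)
Your proof follows essentially the same route as the paper's: the paper also specializes the $\gamma$ of Example~\ref{ej:mixed} to $\gamma^1_1=\gamma^1_2=\gamma^1_4=\gamma^2_1=\gamma^2_4=1$, $\gamma^1_3=\beta_1$, $\gamma^2_2=\gamma^2_3=\beta_2$, computes the same monomials $\gamma^{m_i}$, and reads off $N_1,\dots,N_4$ from the $M_i$ exactly as you do; your bookkeeping of the eight inequalities and of the positive decoration of $\Delta_1,\Delta_2,\Delta_3$ is correct.

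The one point you omit is the verification that the rescaled constant $\bar k_1$ is actually a \emph{positive} reaction rate constant: since $1/\bar k_1$ is defined by $\beta_1\bigl(\tfrac{1}{k_1}+\tfrac{1}{k_2}\bigr)-\tfrac{1}{k_2}$, this quantity can be zero or negative for small $\beta_1$, in which case the ``rescaled network'' makes no sense. The paper handles this by enlarging $N_1$, taking $N_1=\bigl(\min\{M_1,M_2,\tfrac{k_2}{k_2+k_1}\}\bigr)^{-1}$ rather than your $\max(M_1^{-1},M_2^{-1})$, so that $N_1<\beta_1$ forces $1/\bar k_1>0$. One can argue that the $M_i$ arising from Theorem~\ref{thm:mixedgamma} are in fact less than $1$ (since $t_0(\varepsilon)\in(0,1)$ and $\varepsilon_r\in(0,1)$), which would make the extra threshold automatic, but that bound is not part of the statement of the theorem you invoke, so the safeguard should be built into $N_1$ explicitly. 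This is a small fix, not a flaw in the overall strategy.
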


\begin{proof}
Take any positive vector $\gamma=(\gamma^1_1, \gamma^1_2, \gamma^1_3, \gamma^1_4, \gamma^2_1, 
\gamma^2_2, \gamma^2_3,\gamma^2_4)$
satisfying  $\gamma^1_1 =\gamma^1_2=\gamma^1_4, =\gamma^2_1=\gamma^2_4=1$, and $\gamma^2_2=\gamma^2_3$. Call
 $\beta_1=\gamma^1_3$ and  $\beta_2=\gamma^2_2$. Then if $\beta_1$, $\beta_2$ satisfy
\[N_1<\beta_1,\quad N_2<\beta_2,\quad \frac{\beta_2}{\beta_1}<N_3, \quad \frac{\beta_1}{(\beta_2)^2}<N_4,\]
where $N_1=(\min\{M_1,M_2,\frac{k_2}{k_2+k_1}\})^{-1}$, 
$N_2=(\min\{M_6,M_7,M_8\})^{-1}$, $N_3=\min\{M_3, M_4\}$ and $N_4=M_5$, the system 
 \begin{equation}\label{systemwithbetahkk}
 \begin{aligned}
x_4 + C_{13}\,x_4x_5 + \beta_1\,C_{14}\,x_4x_5^2    -  T_1 &= & 0, \\
x_5  + \beta_2\,C_{23}\,x_4x_5 + \beta_2\,C_{24}\,x_4x_5^2 - T_2 &=& 0,
 \end{aligned}
 \end{equation}
has at least $3$ positive solutions. Returning to the original constants, if we keep
 fixed $k_2, k_3, k_4, k_5$,  $T_1, T_2$ and we replace $k_1, k_6$ by
$\bar k_1=(\beta_1(\frac{1}{k_1+k_2})-\frac{1}{k_2})^{-1}$ and $\bar k_6= (\beta_2)^{-1}k_6$,
 the positive steady states arising from the network with these
constants are the positive solutions of  the polynomial system (\ref{systemwithbetahkk}),
 and so it is multistationary because there are at least $3$ positive steady states in a fixed 
 stoichiometric compatibility class. Observe that $k_1$ is positive because of the choice of $N_1$.
\end{proof}

Notice that if the value of $\beta_1$ in the proof of Proposition~\ref{prop:mixedHK} is large enough, $\bar k_1$ is smaller than $k_3$, the necessary 
and sufficient condition to guarantee multistationarity that appears in \cite{feliumincheva}.

\section{Distributive multisite phosphorylation systems}\label{sec:3}
 
 Phosphorylation/dephosphorylation 
 are post-translational modification of proteins mediated by enzymes, particular proteins that add or take 
 off a phosphate group at a specific site, inducing a conformational change that allows/prevents the protein 
 to perform its function.  
 The standard building block in cell signaling is the following enzyme mechanism, which is called a Michaelis-Menten mechanism.
 \vskip -10pt
 \begin{equation}\label{eq:MM}
 {{S_0}}+{E}
 \arrowschem{k_{\rm{on}}}{k_{\rm{off}}} ES_0
 \stackrel{k_{\rm{cat}}}{\rightarrow} {{S_1}}+{E} 
 \end{equation}
  This basic network involves four species: the substrate $S_0$, the phosphorylated substrate $S_1$, the enzyme $E$, called kinase, and the intermediate species $ES_0$, and 
 $3$ reactions, with reaction constants called $k_{\rm{on}}, k_{\rm{off}}, k_{\rm{cat}}$.
 The enzyme $E$ is not consumed after the whole mechanism, which is assumed to be with mass-action kinetics. 
 The concentration of the donor of the phosphate group is considered to be constant, thus hidden in the reaction constants and ignored. 
 This mechanism with $4$ species, $3$ complexes and $3$ reactions is usually
 represented by the scheme depicted in Figure~\ref{fig:scheme}.
 \begin{figure}[h]
 \centering
{\small
\begin{tikzpicture}[node distance=0.5cm]
  \node[] at (1.1,-1.5) (dummy2) {};
  \node[left=of dummy2] (p0) {{$S_0$}};
  \node[right=of p0] (p1) {{$S_1$.}}
    edge[<-, bend right=45] node[above] {{\small{${E}$}}} (p0);
\end{tikzpicture}

    \caption{The network~\eqref{eq:MM}.}\label{fig:scheme}}
\end{figure}
 
The addition of phosphate groups to multiple sites of a single
molecule, may be distributive or processive. Distributive systems require an 
enzyme and substrate to bind several times in order to add/remove multiple
phosphate groups. Processive systems require
only one binding to add/remove all phosphate groups and it was shown in~\cite{processive} that such systems cannot admit more than
one steady state in each stoichiometric compatibility class.
The distibutive multisite phosphorylation system describes the $n$-site phosphorylation of a protein by a 
kinase/phosphatase pair in a sequential and distributive mechanism and it is
known that it has the
capacity of multistationarity for any $n \ge 2$ \cite{sontag}.
  
 The reaction mechanism for the sequential distributive mechanism for the $n$-site network is a sequence of reactions as in~\eqref{eq:MM}, where we append
 $n$ subgraphs of the form:
  \vskip -10pt
 \begin{equation*}\label{eq:MMiF}
 {{S_i}}+{E}
 \arrowschem{k_{\rm{on}_i}}{k_{\rm{off}_i}} ES_i
 \stackrel{k_{\rm{cat}_i}}{\rightarrow} {{S_{i+1}}}+{E}, \, i=0, \dots, n-1,
 \end{equation*}
 and, on the other side, $n$ subgraphs of the form:
 \begin{equation*}\label{eq:MMiF2}
 {{S_i}}+{F}
 \arrowschem{\ell_{\rm{on}_{i-1}}}{\ell_{\rm{off}_{i-1}}} FS_i
 \stackrel{\ell_{\rm{cat}_{i-1}}}{\rightarrow} {{S_{i-1}}}+{F} , \, i=1, \dots, n,
 \end{equation*}
 where $F$ denotes another enzyme called phosphatase, to obtain the network:
  \[   
 \begin{array}{rl} 
 \nonumber 
 S_0+E &
 \arrowschem{k_{\rm{on}_0}}{k_{\rm{off}_0}} ES_0
 \stackrel{k_{\rm{cat}_0}}{\rightarrow} S_1+E\ \cdots\ {\rightarrow}S_{n-1}+E 
 \arrowschem{k_{\rm{on}_{n-1}}}{k_{\rm{off}_{n-1}}}ES_{n-1}
 \stackrel{k_{\rm{cat}_{n-1}}}{\rightarrow} S_n+ E \\
 \label{eq:net_phospho_n_complete} \\ 
 S_n+F &
 \arrowschem{\ell_{\rm{on}_{n-1}}}{\ell_{\rm{off}_{n-1}}} FS_{n}
 \stackrel{\ell_{\rm{cat}_{n-1}}}{\rightarrow} S_{n-1}+F \ \cdots\ {\rightarrow} S_{1}+F
 \arrowschem{\ell_{\rm{on}_0}}{\ell_{\rm{off}_0}} FS_1
 \stackrel{\ell_{\rm{cat}_0}}{\rightarrow} S_0+ F
 \end{array}
 \]
 It represents one substrate that can sequentially acquire up to $n$ phosphate groups, via the action of the kinase $E$, and which can be sequentially released via the action of the phosphatase $F$, in both cases via an intermediate species formed by the union of the substrate and the enzyme.
 The kinetics of this network is deduced by applying the law of mass action to this explicit labeled digraph. There are $3n + 3$ species: 
 the substrate species $S_0$, $S_1$,$\dots$,$S_n$, the enzyme species $E$ and $F$ 
 and the intermediate species $ES_0$, $ES_1$, $\dots$, $ES_{n-1}$, $FS_1$, $FS_2, \dots, FS_{n}$. We denote by $s_0$, 
 $s_1, \dots, s_n$, $e$, $f$, $y_0$, $y_1$, $\dots$, $y_{n-1}$, $u_0$, $u_1$, $\dots$, $u_{n-1}$ 
 the concentration of the species $S_0$, $S_1$,$\dots$,$S_n$, $E$, $F$, $ES_0$, $ES_1$, $\dots$, 
 $ES_{n-1}$, $FS_1$, $FS_2$, $\dots$, $FS_{n}$ respectively. The associated dynamical system that
 arises under mass-action kinetics equals:
 
 \begin{small}
 \begin{align*}
 \frac{ds_0}{dt} &=  {-k_{\rm{on}_0}}s_0e + {k_{\rm{off}_0}}y_0 + {\ell_{\rm{cat}_0}} u_0, \\
 \frac{ds_i}{dt} &=  {k_{\rm{cat}_{i-1}}}y_{i-1} - {k_{\rm{on}_{i}}}s_ie + {k_{\rm{off}_{i}}}y_{i} + {\ell_{\rm{cat}_i}}u_i -
  {\ell_{\rm{on}_{i-1}}}s_if + {\ell_{\rm{off}_{i-1}}}u_{i-1},\ i=1, \dots, n-1, \\
 \frac{ds_n}{dt}& = {k_{\rm{cat}_{n-1}}}y_{n-1}  -{\ell_{\rm{on}_{n-1}}}s_n f + {\ell_{\rm{off}_{n-1}}} u_{n-1},\\
 \frac{dy_i}{dt} &=  {k_{\rm{on}_i}}s_i e -{(k_{\rm{off}_i}+k_{\rm{cat}_i})}y_i, \ i=0,\dots,n-1, \\
  \frac{du_i}{dt} &=  {\ell_{\rm{on}_i}}s_{i+1} f -{(\ell_{\rm{off}_i}+\ell_{\rm{cat}_i})}u_i, \ i=0,\dots,n-1, \\
 \frac{de}{dt}&= -\sum_{i=0}^{n-1}\frac{dy_i}{dt},\ \ \frac{df}{dt}= -\sum_{i=0}^{n-1}\frac{du_i}{dt}.
 \end{align*}
 \end{small}
 
There are three linearly independent conservation laws for any value of $n$ (and no more):
 \begin{small}
 \begin{equation}\label{eq:consfosfo}
 \sum_{i=0}^n s_i + \sum_{i=0}^{n-1} y_i + \sum_{i=0}^{n-1} u_i=  S_{tot}, \quad
 e + \sum_{i=0}^{n-1} y_i=  E_{tot}, \quad
 f + \sum_{i=0}^{n-1} u_i=  F_{tot},
\end{equation}
 \end{small}
 \kern-.4em where clearly the total amounts $S_{tot}, E_{tot}, F_{tot} \in \R_{>0}$ for
 any trajectory intersecting the positive orthant.
 It is straigthforward to see from the differential equations that the concentrations of the intermediates
  species at steady state satisfy the following binomial equations:
 \begin{equation}\label{eq:intermediates}
 y_i \, - \, K_i \, es_i=0,\, i=0,\dots,n-1, \qquad u_i \, - \, L_i \, fs_{i+1}=0,\,  i=0,\dots,n-1,\end{equation}
 where $K_i=\frac{k_{\rm{on}_i}}{k_{\rm{off}_i}+k_{\rm{cat}_i}}$ and  $L_i=\frac{\ell_{\rm{on}_i}}{\ell_{\rm{off}_i}+\ell_{\rm{cat}_i}}$ 
 for each $i=0,\dots,n-1$ ($K_i^{-1}$ and $L_i^{-1}$ are usually called \textit{Michaelis-Menten constants}, $i=0,\dots,n-1$).
 
 Sequential phosphorylation mechanisms are an example of $s$-toric MESSI networks, defined in~\cite{aliciaMer}. We recall in Section~\ref{sec:4} their
 definition and we present  general results  for $s$-toric  MESSI systems that explain our computations in this section.
 In particular, by Theorem~3.5 in \cite{aliciaMer} we can find the following binomial equations that describe the steady states.
 The whole steady state variety can be cut out in the positive orthant by adding to the binomials in~\eqref{eq:intermediates},
  the binomial equations:
 \[\tau_is_ie - \nu_is_{i+1}f = 0,\]
 where $\tau_i=k_{\rm{cat}_i}K_i$ and $\nu_i=\ell_{\rm{cat}_i}L_i$, for each $i=0,\dots, n-1$.  Using these binomial equations, 
 we can parametrize the positive steady states by monomials. For instance, 
 we can write the concentration at steady state of all species in terms of the species $s_0, e, f$:
\[ \begin{array}{lcrl}
 \label{parametrizationphospo}
 s_i &=&T_{i-1} \, \frac{s_0e^i}{f^i},& i=1,\dots,n, \nonumber \\
 y_i &=& K_i \, T_{i-1} \, \frac{s_0e^{i+1}}{f^i},& i=0,\dots,n-1, \\
 u_i &=& L_i \, T_i \, \frac{s_0e^{i+1}}{f^i}, &  i=0,\dots,n-1, \nonumber
 \end{array}\]
 where $T_i=\prod^i_{j=0}\frac{\tau_j}{\nu_j}$ for $i=0,\dots,n-1$ and $T_{-1}=1$.

We will use this parametrization in order to apply Theorems~\ref{th:BS} and~\ref{thm:main2}  to the sequential phosphorylation 
mechanisms for any
$n$: 

 \begin{thm}\label{th:nfosfo}
 With the previous notations, assume
 \begin{equation}\label{cond1fosfo} 
 S_{tot} >F_{tot}.
 \end{equation}
 Then, there is a choice of rate constants for which the distributive $n$-site phosphorylation system is multistationary. 
 More explicitly,  for any choice of positive real numbers $k_{\rm{cat}_1},\ell_{\rm{cat}_1}$ satisfying
 \begin{equation}\label{cond2fosfo} 
 \frac{k_{\rm{cat}_1}}{\ell_{\rm{cat}_1}} \, > \, \max\left\{ \frac{F_{tot}}{S_{tot}-F_{tot}},\frac{F_{tot}}{E_{tot}} \right\},
 \end{equation}
 fix any value of the remaining rate constants and positive numbers $h_i$, for $i=4,\dots,2n+3$ such that $i \, 
h_{n+5}<h_{i+3}$ 
 for $i=1,\dots,n$ and $(i-1)\, h_{n+5}<h_{n+i+3}$ for $i=1,3,\dots,n$. 
 Then, there exists $t_0 >0$ such that for any value of $t \in (0, t_0)$ the system is multistationary after the rescalings
 $t^{h_{n+4}}\, k_{\rm{on}_0}$, $t^{h_{n+4+i}-h_{i+3}} \, k_{\rm{on}_i}, i=1,\dots,n-1$, $t^{h_{n+4+i}-h_{i+4}} \, \ell_{\rm{on}_i}, i=0,\dots,n-1$.
 
Similarly, for any fixed choice of reaction rate constants and total conservation constants satysfying (\ref{cond1fosfo}) and (\ref{cond2fosfo}), 
there exist positive constants $M_i$, $i=1,\dots,4n-2$ such that for any positive values of $\gamma_i$, $i=1,\dots,2n$ verifying
 \begin{align}\label{eq:gammaR}
 \gamma_i&<M_i, i=1,\dots,2n, \qquad \frac{\gamma_i}{\gamma^{i}_{n+2}} <
 M_{2n+i}, i=1,\dots,n, \\ \frac{\gamma_{n+i}}{\gamma^{i-1}_{n+2}} &<M_{3n-2+i},  i=3,\dots,n, \nonumber
 \end{align}
 the rescaling of the given parameters
  $k_{on_1}$, $k_{on_i}$, $i=2,\dots,n-1$, $\ell_{on_i}$, $i=1,\dots,n-1$ by 
   \begin{equation}\label{eq:kR}
    \gamma_{n+1} \, k_{on_1},  \, \frac{\gamma_
 {n+1+i}}{\gamma_i} \, k_{on_i}, i=2,\dots,n-1,  \, \frac{\gamma_{n+1+i}}{\gamma_{i+1}} \, \ell_{on_i}, i=1,\dots,n-1,
 \end{equation}
 respectively, gives raise to a multistationary system.
 \end{thm}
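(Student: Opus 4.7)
The plan is to reduce the steady state problem to a sparse system in three variables and then invoke Theorem~\ref{th:BS} for the first half of the statement and Theorem~\ref{thm:main2} for the second. Starting from the monomial parametrization of positive steady states displayed just before the theorem, I substitute the expressions for $s_i, y_i, u_i$ in terms of $(s_0,e,f)$ into the three conservation laws~\eqref{eq:consfosfo} and clear the common denominator $f^{n-1}$ in each equation. This yields a polynomial system $g_1=g_2=g_3=0$ in $\R[s_0,e,f]$ whose positive solutions are in bijection with the positive steady states lying in the stoichiometric compatibility class prescribed by $(S_{tot},E_{tot},F_{tot})$. The combined support $\mathcal A\subset\Z^3$ has $2n+4$ lattice points, and the coefficient matrix $C\in\R^{3\times(2n+4)}$ depends linearly on the total concentrations and polynomially on the constants $K_i,L_i,\tau_i,\nu_i$.

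Next I exhibit two $3$-simplices $\Delta_1,\Delta_2\subset\mathcal A$ sharing a facet and both positively decorated by $C$. A natural choice takes three common vertices to be the exponents of the monomials $s_0$, $e$, $f$ (or of the constant terms $-S_{tot},-E_{tot},-F_{tot}$), and distinguishes $\Delta_1$ from $\Delta_2$ by exchanging one of the intermediate monomials indexed by the critical catalytic step $i=1$. The decoration condition then reduces to the positivity of a short list of $3\times 3$ minors of the associated $3\times 4$ submatrices of $C$; after cancellation of strictly positive factors, the relevant minors become positive multiples of $S_{tot}-F_{tot}$, of $k_{\rm cat_1}(S_{tot}-F_{tot})-\ell_{\rm cat_1}F_{tot}$ and of $k_{\rm cat_1}E_{tot}-\ell_{\rm cat_1}F_{tot}$, which are positive exactly under the hypotheses~\eqref{cond1fosfo} and~\eqref{cond2fosfo}.

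Proposition~\ref{prop:2} guarantees the existence of a regular subdivision containing both $\Delta_1$ and $\Delta_2$, and Lemma~\ref{lem:CDeltap} provides the explicit description of the cone $\mathcal C_{\Delta_1,\Delta_2}$ of valid height vectors. Normalizing $h$ to vanish on the vertex monomials of $\Delta_1\cup\Delta_2$ and naming the remaining heights $h_4,\dots,h_{2n+3}$, the facet-defining inequalities of $\mathcal C_{\Delta_1,\Delta_2}$ reduce to the bounds $i\cdot h_{n+5}<h_{i+3}$ and $(i-1)\cdot h_{n+5}<h_{n+i+3}$ listed in the statement. Theorem~\ref{th:BS} then supplies $t_0>0$ such that the perturbed system with each monomial of exponent $a_j$ multiplied by $t^{h_j}$ admits at least two nondegenerate positive solutions for every $t\in(0,t_0)$; unwinding these $t^{h_j}$ factors through the parametrization converts them into the announced rescalings of the ``on'' rate constants $k_{\rm on_i}$ and $\ell_{\rm on_i}$, since these are precisely the parameters attached to the non-vertex monomials of~$\mathcal A$.

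For the second half of the theorem I would apply Theorem~\ref{thm:main2} to the same data $(\mathcal A, C, \Delta_1, \Delta_2)$. The generators $m_r$ of the dual description of $\mathcal C_{\Delta_1,\Delta_2}$ from Lemma~\ref{lem:CDeltap} yield the inequalities $\gamma^{m_r}<M_r$; fixing $\gamma_j=1$ on the coordinates indexed by the vertex monomials and renaming the remaining coordinates $\gamma_1,\dots,\gamma_{2n}$ turns these bounds into~\eqref{eq:gammaR}, while expressing each $\gamma_j$ as a product of rate constants reproduces the rescalings~\eqref{eq:kR}. The principal obstacle is combinatorial rather than conceptual: one must choose the pair $\Delta_1,\Delta_2$ among many candidates so that the positive-decoration sign conditions collapse precisely into~\eqref{cond1fosfo} and~\eqref{cond2fosfo}, and then match the kernel-vector inequalities of Lemma~\ref{lem:CDeltap} against the particular monomial inequalities displayed in~\eqref{eq:gammaR}. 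The deferred treatment of $s$-toric MESSI systems in Section~\ref{sec:4} will streamline this bookkeeping for general networks of this kind.
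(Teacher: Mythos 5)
Your proposal is correct and follows essentially the same route as the paper's proof: substitute the monomial parametrization in $(s_0,e,f)$ into the conservation laws, take the pair of $3$-simplices sharing the facet $\{(1,0,0),(0,1,0),(0,0,0)\}$ and differing in $(0,0,1)$ versus the intermediate exponent $(1,2,-1)$, check that the decoration minors reduce exactly to \eqref{cond1fosfo}--\eqref{cond2fosfo}, and then invoke Proposition~\ref{prop:2} with Theorems~\ref{th:BS} and~\ref{thm:main2}. The only cosmetic discrepancies are that the paper works directly with the Laurent support rather than clearing denominators, and that the shared facet uses the constant monomial $1$ rather than $f$; neither affects the argument.
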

  
 \begin{proof}  Previously in this section, we showed that we can write the concentration 
 at steady state of all species in terms of the species $(s_0, e, f)$, 
 as in (\ref{parametrizationphospo}).  We substitute this monomial parametrization of 
 the steady states into the linear conservation relations \eqref{eq:consfosfo}. 
 We have a system of three equations and we write it in  matricial form:
 \[C\begin{pmatrix}
 s_0 & e & f & s_0ef^{-1} & \dots & s_0e^nf^{-n} & s_0ef^0 & \dots & s_0e^nf^{-(n-1)} & 1
 \end{pmatrix}^t=0,
 \]
 where the matrix $C\in \R^{3\times (2n+4)}$ is the matrix of coefficients:

 \begin{small}
 \[C=\begin{pmatrix}
 1 & 0 & 0 & T_0 & \dots & T_{n-1} & K_0 + L_0T_0 & \dots & K_{n-1}T_{n-2} + L_{n-1}T_{n-1} & -S_{tot} \\
 0 & 1 & 0 & 0 & \dots & 0 & K_0 & \dots & K_{n-1}T_{n-2} & -E_{tot}\\
 0 & 0 & 1 & 0 & \dots & 0 & L_0T_0 &  \dots & L_{n-1}T_{n-1} & -F_{tot}
 \end{pmatrix}.
 \]
 \end{small}

 If we order the variables in this way: $s_0$, $e$, $f$, the support of the system is:
 \begin{align*}
 \mathcal{A}=\{(1,0,0), (0,1,0), (0,0,1), (1,1,-1),(1,2,-2),\dots,(1,n,-n),\\
 (1,1,0),(1,2,-1),\dots, (1,n,-(n-1)),(0,0,0) \}. 
 \end{align*} 
 
 We want to find two positively decorated $3$-simplices with vertices in $\mathcal{A}$ which share a facet. For example we take the simplices
\begin{align*}
\Delta_1&=\{(1,0,0),(0,1,0),(0,0,1),(0,0,0)\},\\
\Delta_2&=\{(1,0,0),(0,1,0),(1,2,-1),(0,0,0)\}.
\end{align*} 
They are shown in  Figure~\ref{fig:2simplicesfosfo}, made with {\tt Polymake}~\cite{polymake}, which is a very useful 
tool to visualize and to do computations with polytopes and triangulations.
 
  \begin{figure}[h]
 \centering
 \begin{tikzpicture}%
 [x={(0.545871cm, -0.299905cm)},
 y={(0.837775cm, 0.181354cm)},
 z={(0.012573cm, 0.936572cm)},
 scale=2.000000,
 back/.style={dotted, thin},
 edge/.style={color=black, thick},
 facet/.style={fill=white,fill opacity=0.400000},
 vertex/.style={inner sep=1pt,circle,draw=black,fill=black,thick,anchor=base}]
 \coordinate (0.00000, 0.00000, 0.00000) at (0.00000, 0.00000, 0.00000);
 \coordinate (0.00000, 0.00000, 1.00000) at (0.00000, 0.00000, 1.00000);
 \coordinate (0.00000, 1.00000, 0.00000) at (0.00000, 1.00000, 0.00000);
 \coordinate (1.00000, 0.00000, 0.00000) at (1.00000, 0.00000, 0.00000);
 \coordinate (1.00000, 2.00000, -1.00000) at (1.00000, 2.00000, -1.00000);
 \draw[edge] (0.00000, 0.00000, 0.00000) -- (0.00000, 1.00000, 0.00000);
 \draw[edge] (0.00000, 0.00000, 0.00000) -- (1.00000, 2.00000, -1.00000);
 \draw[edge] (0.00000, 0.00000, 1.00000) -- (0.00000, 1.00000, 0.00000);
 \draw[edge] (0.00000, 1.00000, 0.00000) -- (1.00000, 2.00000, -1.00000);
 \node[vertex] at (0.00000, 1.00000, 0.00000)     {};
 \fill[facet, fill=red!70!black,fill opacity=0.800000] (1.00000, 0.00000, 0.00000) -- (0.00000, 0.00000, 0.00000) -- (0.00000, 0.00000, 1.00000) -- cycle {};
 \fill[facet, fill=green!70!black,fill opacity=0.800000] (1.00000, 2.00000, -1.00000) -- (0.00000, 1.00000, 0.00000) -- (1.00000, 0.00000, 0.00000) -- cycle {};
 \fill[facet, fill=red!70!black,fill opacity=0.800000] (1.00000, 0.00000, 0.00000) -- (0.00000, 1.00000, 0.00000) -- (0.00000, 0.00000, 1.00000) -- cycle {};
 \draw[edge] (0.00000, 0.00000, 0.00000) -- (0.00000, 0.00000, 1.00000);
 \draw[edge] (0.00000, 0.00000, 0.00000) -- (1.00000, 0.00000, 0.00000);
 \draw[edge] (0.00000, 0.00000, 1.00000) -- (1.00000, 0.00000, 0.00000);
 \draw[edge] (1.00000, 0.00000, 0.00000) -- (1.00000, 2.00000, -1.00000);
 \draw[edge] (1.00000, 0.00000, 0.00000) -- (0.00000, 1.00000, 0.00000);
 \node[vertex] at (0.00000, 0.00000, 0.00000)     {};
 \node[vertex] at (0.00000, 0.00000, 1.00000)     {};
 \node[vertex] at (1.00000, 0.00000, 0.00000)     {};
 \node[vertex] at (1.00000, 2.00000, -1.00000)    {};
 \node at (-0.65000, 0.00000, -0.20000){$(0,0,0)$};
  \node at (-0.65000, 0.00000, 0.90000){$(0,0,1)$};
\node at (1.0000, 0.00000, -0.2500){$(1,0,0)$};
 \node at (0.3500, 1.20000, 0.2000){$(0,1,0)$};
 \node at (1.5000, 2.0000, -1.0000){$(1,2,-1)$};
 \end{tikzpicture}
 \caption{The simplices $\Delta_1$ and $\Delta_2$.}
 \label{fig:2simplicesfosfo}
 \end{figure}
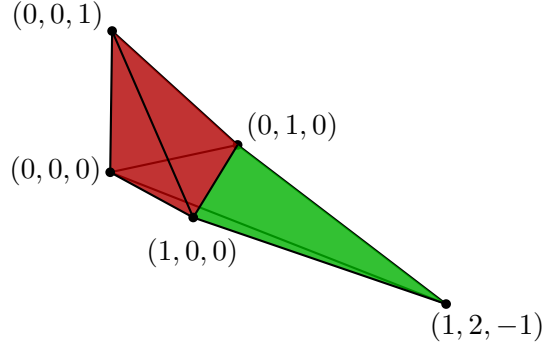

 The simplex $\Delta_1$ is automatically positively decorated by $C$. The simplex $\Delta_2$  is positively decorated by $C$ if and only if:
 
 $$E_{tot} - \dfrac{K_1T_0F_{tot}}{{L_1T_1}} >0 , \quad \text{and }  S_{tot} - \dfrac{(K_1T_0 + L_1T_1)F_{tot}}{L_1T_1}>0.$$
 
 Getting back to the original constants, we can write the previous conditions in the following form:
 %
 %
 \begin{equation} \label{eq:conditionsf2v2}
 \begin{aligned}
 S_{tot} &>F_{tot},\\
 \frac{k_{\rm{cat}_1}}{\ell_{\rm{cat}_1}} \, > \, \max&\left\{ \frac{F_{tot}}{S_{tot}-F_{tot}},\frac{F_{tot}}{E_{tot}} \right\}.
 \end{aligned}
 \end{equation}
 
 Suppose that conditions (\ref{eq:conditionsf2v2}) hold. Then the simplices $\Delta_1$ and $\Delta_2$ are positively decorated. 
Proposition~\ref{prop:2} says that exists a regular triangulation of $\Gamma$ of the convex hull of $\mathcal{A}$, 
 such that the two simplices $\Delta_1$ and $\Delta_2$ are part of that triangulation. Given any height function $h$
 inducing such a $\Gamma$, by Theorem~\ref{th:BS} there exists $t_0 \in \R_{>0}$ such that for all 
 $0<t<t_0$, the number of positive nondegenerate solutions of the scaled system:
 \begin{small}
 \begin{equation}\label{systemwitht-n}
 \begin{aligned}
 t^{h_1}s_0 + \sum_{i=1}^n T_{i-1}t^{h_{i+3}}\dfrac{s_0e^{i}}{f^{i}} + 
 \sum_{i=0}^{n-1}(K_iT_{i-1}+L_iT_i)t^{h_{n+4+i}}\dfrac{s_0e^{i+1}}{f^{i}} 
  - S_{tot}t^{h_{2n+4}}&= 0,\\
 t^{h_2}e  + \sum_{i=0}^{n-1}K_i T_{i-1}t^{h_{n+4+i}}\dfrac{s_0e^{i+1}}{f^{i}}  - E_{tot}t^{h_{2n+4}}&= 0, \\
 t^{h_3}f + \sum_{i=0}^{n-1}L_i T_it^{h_{n+4+i}}\dfrac{s_0e^{i+1}}{f^{i}} - F_{tot}t^{h_{2n+4}}&= 0,
 \end{aligned}
 \end{equation}
 \end{small}
 \noindent is at least two, where  $h_1=h(1,0,0)$, $h_2=h(0,1,0)$, $h_3=h(0,0,1)$, $h_{i+3}=h(1,i,-i)$,
 for $i=1,\dots,n$, $h_{n+3+i}=h(1,i,-(i-1))$, for $i=1,\dots,n$ and $h_{2n+4}=h(0,0,0)$.
 
 We can suppose without loss of generality that  $h_1=h_2=h_3=h_{2n+4}=0$ and $h(1,2,-1)=h_{n+5}>0$. Let $\varphi_1$ 
  and $\varphi_2$ be the affine linear functions $\varphi_1(x,y,z)=0$ and $\varphi_2(x,y,z)=-h_{n+5} \, z$ which agree with $h$ 
on the 
  simplices $\Delta_1$ and $\Delta_2$ respectively.
   Then,
   \vskip -10pt
   \[\begin{matrix}
   0 &<& h_{i+3}, &\varphi_2(1,i,-i)  & = & h_{n+5}\,i & < & h_{i+3}& \mbox{ for }i=1,\dots,n,\\
   0 &<& h_{n+3+i}, & \varphi_2(1,i,-(i-1))  & = & h_{n+5}(i-1) & < & h_{n+3+i}& \mbox{ for }i=1,3\dots,n,\ i\neq2.
   \end{matrix}\]
  Any such choice defines a regular subdivision containing both simplices (and if the heights are generic the subdivision
  is a regular triangulation).

 If we rescale the following constants:
 \begin{align} \label{eq:resc}
 &t^{h_{n+4}}\, K_0,\qquad t^{h_{n+4+i}-h_{i+3}} \, K_{i},\  i=1,\dots,n-1\\ \nonumber
 &t^{h_{n+4+i}-h_{i+4}}\, L_i,\  i=0, \dots, n-1.
 \end{align}
 and we keep fixed the values of the constants $k_{\rm{cat}_1}$ and $\ell_{\rm{cat}_1}$ and the total values $E_{tot}$, $F_{tot}$ 
 and $S_{tot}$ (such that (\ref{eq:conditionsf2v2}) holds), the dynamical system obtained from the network with these constants is the 
 system (\ref{systemwitht-n}). And then, for these constants the network has at least two positive steady states.  
 Moreover, it is straightforward to check that it is enough to rescale the following original
 constants as indicated in the statement:
 \begin{equation}\label{eq:rescalek}
 t^{h_{n+4}}\, k_{\rm{on}_0}, \,  t^{h_{n+4+i}-h_{i+3}}\, k_{\rm{on}_i}, i=1,\dots,n-1, \, t^{h_{n+4+i}-h_{i+4}}\, \ell_{\rm{on}_i}, i=0,\dots,n-1,
 \end{equation}
  to get the equalities~\eqref{eq:resc}.
 
The last part of the statement follows with similar arguments via Proposition~\ref{prop:cone} and Theorem~\ref{thm:main2}.
 
 \end{proof}

\begin{remark}  Using a parametrization of the concentrations of the species
 at steady state in terms of other variables (or with another choice of the simplices) 
 we can obtain other regions in the parameters space that guarantee multistationarity .
\end{remark}

 
 \section{MESSI Systems}\label{sec:4}
 
 In \cite{aliciaMer}, Dickenstein and P\'erez Mill\'an introduced a general framework for biological systems, called MESSI systems, 
 that describe Modifications of type Enzyme-Substrate or Swap with Intermediates. Distributive multisite phosphorylation systems and 
 enzymatic cascades as the one we depict in Figure~\ref{fig:sadi}, with any number of layers which occur in cell signaling pathways (and we study in detail in~\cite{cascades}), are examples of MESSI systems of biological significance. In particular they are examples of $s$-toric MESSI systems, 
 an important subclass of MESSI systems.
 The authors proved in~\cite{aliciaMer} that any $s$-toric MESSI system is toric, that is, the positive steady states can 
 be described with binomials and under certain hypotheses, 
 they can choose explicit binomials with coefficients in $\Q(\kappa)$ which describe the positive steady states. Moreover, under certain combinatorial conditions,
 they describe a basis of conservation laws for these systems. 
 
 \begin{figure}[h]
	\centering
		 \begin{tikzpicture}[scale=0.7,node distance=0.5cm]
		\node[] at (1.1,-1.5) (dummy2) {};
		\node[left=of dummy2] (p0) {$P_0$};
		\node[right=of p0] (p1) {$P_1$}
		edge[->, bend left=45] node[below] {\textcolor{black!70}{\small{$F_2$}}} (p0)
		edge[<-, bend right=45] node[above] {} (p0);
		\node[] at (-1,0) (dummy) {};
		\node[left=of dummy] (s0) {$S_0$};
		\node[right=of s0] (s1) {\textcolor{blue}{$S_1$}}
		edge[->, bend left=45] node[below] {\textcolor{black!70}{\small{$F_1$}}} (s0)
		edge[<-, bend right=45] node[above] {\textcolor{black!70}{\small{$E$}}} (s0)
		edge[->,thick,color=blue, bend left=25] node[above] {} ($(p0.north)+(25pt,10pt)$);
		\end{tikzpicture}
	
		\caption{Scheme of a 2-layer cascade of GK-loops,  similar to Fig.~\ref{fig:scheme}.}\label{fig:sadi}
\end{figure}
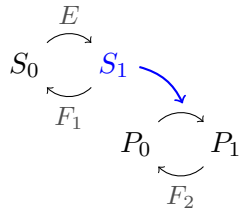

In order to apply our method in Section~\ref{sec:1} to determine a multistationarity
region for the network~\eqref{networkHHK} or in Section~\ref{sec:3} 
for the distributive multisite phosphorylation systems, we proposed and replaced a rational
parametrization of the steady state variety into a basis of the conservation
relations, and we then had to rescale some of original parameters at the end of the procedure (as in~\eqref{eq:rescalek}, ~\eqref{eq:kR}).
Our main result in this section is Theorem~\ref{th:reescalamientos}, which  guarantees that the rescaling of the parameters can be done
for any $s$-toric MESSI system, together with Proposition~\ref{prop:parametrizationmessi} which
ensures and describes the existence of a rational parametrization of the
steady state variety. 

\subsection{Basic definitions concerning MESSI systems}
We briefly introduce the basic definitions of MESSI systems. For a more detailed explanation, see \cite{aliciaMer}.

A MESSI network is a chemical reaction network, for which exists a partition of 
the set of species $\Sp$ into disjoint subsets:
 \begin{equation}\label{eq:partition}
\Sp=\Sp^{(0)}\bigsqcup \Sp^{(1)} \bigsqcup \Sp^{(2)} \bigsqcup \dots \bigsqcup \Sp^{(m)},
\end{equation}
where $m \ge 1$ and $\bigsqcup$ denotes disjoint union. Species in $\Sp^{(0)}$ are called \emph{intermediate} 
and species in $\Sp_1:= \Sp \setminus\Sp^{(0)}$ are called \emph{core}, with $\# \Sp^{(0)}=p$ and $\# \Sp^{(1)}=s-p>0$.
As before, we denote the species with upper letters and the concentration of the species with small letters,
 for example $x_j$ denotes the concentration of the species $X_j$.


There are two types of complexes allowed in a MESSI network: intermediate complexes and core complexes. 
The intermediate complexes are complexes that
consist of a unique intermediate species that only appears in that complex. 
The core complexes are mono or bimolecular and consist of either one or two core species.
When a core complex consists of two species $X_i, X_j$, they { must} 
belong to {different} sets $\Sp^{(\alpha)}, \Sp^{(\beta)}$ with $\alpha \neq \beta,\, \alpha, \beta \geq1$. 

We say that a complex $y$  reacts to a complex $y'$ \emph{via intermediates} if either $y\to y'$ or
there exists a path of reactions from $y$ to $y'$ only through intermediate complexes.  
This is denoted  by $y \uri y'$. Another condition in the intermediate complexes 
is that for every intermediate complex $y$, there must exist core
complexes $y_1$ and $y_2$ such that $y_{1}\uri y$ and $y\uri y_2$.
The reactions in a MESSI network satisfy the following rules:
if three species are related by $X_i+ X_j \uri X_k$ or $X_k \uri X_i + X_j$, then $X_k$ is an intermediate species.
If two monomolecular complexes consisting of a single core species $X_i, X_j$ are related by $X_i\uri X_j$,
 then there exists $\alpha \ge 1$ such that both belong to $\Sp^{(\alpha)}$.
And if $X_i+X_j\uri X_k+X_\ell$ then, there exist $\alpha \neq \beta$ such that 
$X_i,X_k \in \Sp^{(\alpha)}$, $X_j,X_\ell \in \Sp^{(\beta)}$ or $X_i,X_\ell \in \Sp^{(\alpha)}$, $X_j,X_k \in \Sp^{(\beta)}$. 

A partition in the set of species that satisfies all the previous conditions in the complexes 
and reactions defines a MESSI \emph{structure}. There can be many possible partitions that define a MESSI structure of a fixed network.
If we have two partitions
$\Sp = \Sp^{(0)}\sqcup \Sp^{(1)} \sqcup \Sp^{(2)} \sqcup \dots \bigsqcup \Sp^{(m)}$ 
and $\Sp = {\Sp'}^{(0)}\sqcup \Sp'^{(1)} \sqcup \Sp'^{(2)} \sqcup \dots \bigsqcup \Sp'^{(m')}$, 
we say that the first partition refines the second one if and only if $\Sp^{(0)}\supseteq {\Sp'}^{(0)}$
and for any $\alpha \ge 1$, there exists
$\alpha'\ge 1$ such that $\Sp^{(\alpha)}\subseteq {\Sp'}^{(\alpha')}$.
This defines a partial order in the set of all possible partitions, and in particular we have the notion of a \emph{minimal} partition. 

\begin{ej}\label{ej:mixedphospho}  In Section~\ref{sec:3} 
we presented the distributive multisite phosphorylation systems. The following network is an example of 
a mixed phosphorylation mechanism (partially distributive, 
partially processive) studied in~\cite{suwanmajokrishnan}. The reaction network is as follows:
  \[   
 \begin{array}{rl} 
 S_0+E &
 \arrowschem{\mbox{\footnotesize $ k_1$ }}{\mbox{\footnotesize $ k_2$ }} ES_0
 \stackrel{k_3}{\rightarrow} S_1+E
 \arrowschem{\mbox{\footnotesize $ k_4$ }}{\mbox{\footnotesize $ k_5$ }} ES_1
 \stackrel{k_6}{\rightarrow} S_2+E
 \label{eq:net_phospho_mix} \\ 
 S_2+F &
 \arrowschem{\mbox{\footnotesize $ k_7$ }}{\mbox{\footnotesize $ k_8$ }} FS_2
 \stackrel{k_9}{\rightarrow} FS_1
\stackrel{k_{10}}{\rightarrow} S_0+ F
 \end{array}
 \]
A MESSI structure of the network is given by the following minimal partition of the species:
 $\Sp^{(0)}=\{ES_0, ES_1, FS_1, FS_2\}$  (the intermediate species), $\Sp^{(1)}=\{E\}$, $\Sp^{(2)}=\{F\}$ and $\Sp^{(3)}=\{S_0, 
S_1, S_2\}$.
We will use this partition in the next examples featuring this network.
Another  example of a partition giving a MESSI structure, which is not minimal, is the following:
 ${\Sp'}^{(0)}=\{ES_0, ES_1, FS_1, FS_2\}$, ${\Sp'}^{(1)}=\{E,F\}$ and ${\Sp'}^{(2)}=\{S_0, 
S_1, S_2\}$. 
\end{ej}

We now present three digraphs associated to a MESSI network with digraph $G$.
First, we introduce the associated digraph $G_1$, where the intermediate species are eliminated, 
that is, with set of species $\Sp_1$. We associated to this set the inherited partition
\begin{equation}\label{eq:G1}
 \Sp_1= \Sp^{(1)} \bigsqcup \Sp^{(2)} \bigsqcup \dots \bigsqcup \Sp^{(m)}.
\end{equation}
The vertex set of $G_1$ consists of all the core complexes. 
An edge $y\to y'$, with $y, y'$ core complexes, belongs to the edge set of $G_1$ if and only if $y\uri y'$ in $G$. 
The explicit rate constants of these edges are described in the proof of Theorem~3 in the ESM of \cite{fw13} (also made explicit 
in~\cite{aliciaMer}).
We will use the following notation for the intermediate species: 
$\Sp^{(0)}=\{U_1,\dots,U_p\}$.
For each $y\uri y'$ in $G$, with $y$ and $y'$ core complexes,   
the reaction constant $\tau(\kappa)$ in $G_1$ which gives the label $y\overset{\tau(\kappa)}{\longrightarrow} y'$, equals:
\begin{equation}\label{eq:tau}
\tau(\kappa)=\kappa_{yy'}+\overset{p}{\underset{k=1}{\sum}}\kappa_k\, \mu_k(\kappa),
\end{equation}
where $\kappa_{yy'}\ge 0$ is positive when $y\overset{\kappa_{yy'}}{\longrightarrow} y'$ in $G$ (and $\kappa_{yy'}=0$ otherwise), 
and $\kappa_k\ge 0$ is positive if $U_k\overset{\kappa_k}{\longrightarrow} y'$ and 
$y\uri U_k$ in $G$ (and $\kappa_k=0$ otherwise), with $\mu_k(\kappa)$ as in \eqref{eq:monom} below.

We next introduce a labeled associated multidigraph $G_2$ where we ``hide'' the concentrations of some of the species in the labels. 
We keep all monomolecular reactions $X_i\to X_j$ in $G_1$ and for each reaction $X_i+X_\ell \overset{\tau}{\longrightarrow} X_j+X_m$ in $G_1$,
with $X_i,X_j \in \Sp^{(\alpha)}$, $X_\ell,X_m \in \Sp^{(\beta)}$,
we consider two reactions $X_i \overset{\tau x_\ell}{\longrightarrow} X_j$
and $X_\ell \overset{\tau x_i}{\longrightarrow} X_m$. In principle this multidigraph $MG_2$ might contain loops or parallel edges between any pair of nodes. 
We obtain the  digraph $G_2$ by collapsing into one edge all parallel edges of $MG_2$.
The label of an edge in $G_2$ is the sum of the labels of the parallel edges in
the multidigraph.  By the rules of the reactions in a MESSI network, $G_2$ is a linear
graph (each node is indicated by a single variable) and the labels on the edges depend on the rate constants but might also depend on the concentrations of some species.  We call $G_2^\circ$ the 
digraph obtained from the deletion of loops and isolated nodes of $G_2$.
It can be shown (see Lemma~18 of \cite{aliciaMer}) that if the partition associated to a MESSI system is minimal, the connected 
components of the associated digraph $G_2$ are in bijection with the subsets $\Sp^{(\alpha)}$ corresponding to a core species and the set of nodes of the corresponding component equals $\Sp^{(\alpha)}$.

Finally, given a MESSI system with a minimal partition of the set of species, we define the associated digraph $G_E$, whose vertices are the 
sets $\Sp^{(\alpha)}$ for $\alpha \ge 1$, and there is an edge from $\Sp^{(\alpha)}$ to $\Sp^{(\beta)}$  if there is a species in $\Sp^{(\alpha)}$
in a label of an edge in $G_2^\circ$ between species of $\Sp^{(\beta)}$.

\begin{ej}[Example~\ref{ej:mixedphospho}, continued] The digraphs  $G_1$, $G_2$, and $G_E$ 
associated to the network of Example~\ref{ej:mixedphospho} are depicted in Figure~\ref{fig:ejdigraphsmixedphospho}.

 \begin{figure}
 \centering
 \begin{tikzpicture}
 \node[]at(0,0)(a){$\begin{array}{c}
  S_0 + E  \overset{\tau_1}{\rightarrow} S_1 + E \overset{\tau_2}{\rightarrow}
S_2 + E\\
 S_2 + F  \overset{\tau_3}{\rightarrow} S_0 + F 
  \end{array}$};
  \node[]at(0,-1.5){$G_1$};
\end{tikzpicture} 
 \hspace{1.0cm}
\begin{tikzpicture}
\node[]at(-1,0.8)(a){$S_0$};
\node[]at(0,0.8)(b){$S_1$};
\node[]at(1,0.8)(c){$S_2$};
\node[]at(-1,-0.4)(e){$E$};
\node[]at(0.5,-0.4)(f){$F$};
\path[->,every node/.style={font=\sffamily\footnotesize}]
    (a) edge node [above] {$e\tau_1$} (b)
    (b) edge node [above] {$e\tau_2$} (c)
    (c) edge[bend left] node [below] {$f\tau_3$} (a);
\path
(e)   edge[in=0,out=300,loop] node  {} (e);
\path
(f)   edge[in=0,out=300,loop]node  {} (f);
\node[]at(0,-1.5){$G_2$};
\end{tikzpicture} 
 \hspace{1.0cm}
\begin{tikzpicture}[ampersand replacement=\&] 
\matrix (m) [matrix of math nodes, row sep=1.5em, column sep=1em, text height=1.5ex, text depth=0.25ex]
{ \Sp^{(1)}\&  \Sp^{(3)}  \\
	\Sp^{(2)} \&  \& \\};
\draw[->]($(m-1-1)+(0.4,0)$) to node[below] (x) {} ($(m-1-2)+(-0.4,0)$);
\draw[->]($(m-2-1)+(0.4,0)$) to node[below] (x) {} ($(m-1-2)+(-0.35,-0.15)$);
\node[]at(0,-1.5){$G_E$};
\end{tikzpicture}
 \caption{The digraphs $G_1$, $G_2$ and $G_E$ of the network in Example~\ref{ej:mixedphospho}.}
\label{fig:ejdigraphsmixedphospho}
 \end{figure}
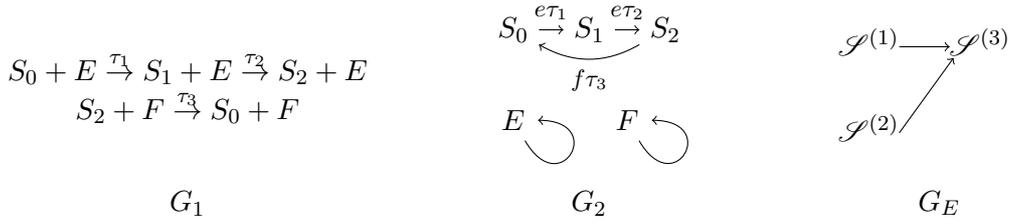
\end{ej}

In order to give the definition of an $s$-toric MESSI we have to recall some definitions from graph theory. Recall that a vertex
in a directed graph has indegree zero if it is not the head of any directed edge and outdegree zero if it is not the tail of any directed edge.
A spanning tree of a digraph is a subgraph that contains all the vertices, is connected and acyclic as an undirected graph. 
An $i$-tree of a graph is a spanning tree where the $i$-th vertex
is its unique sink (that is, the only vertex with outdegree zero). Given an $i$-tree $T$, we call $c^{T}$ the product of the labels of all the edges of $T$.
An $s$-toric MESSI system is a MESSI system that also satisfies the following conditions: i) for any intermediate complex $y$, there exists a 
unique core complex $y_1$ such that $y_1\uri y$, ii) the associated multidigraph $MG_2$ does not have parallel edges and the digraph $G_2$ is
weakly reversible (i.e., for any pair of nodes in the same connected component there is a directed path joining them), iii) for each vertex $i$ of $G_2^\circ$ and any choice of $i$-trees $T, T'$ of $G_2^\circ$, 
the quotient $c^T/c^{T'}$ only depends on the rate constants $\tau$.
It is interesting to note that even if this definition is restrictive, many of the
common enzymatic networks in the literature satisfy these conditions. So, there is
a wide applicability of our results.



\subsection{Existence of rescalings}
The following proposition summarizes some results of \cite{aliciaMer} and describes the conservation laws as well as
the existence of a positive parametrization of the positive steady states. By a positive parametrization of the variety $V$
of positive steady states we mean a $\mathcal{C}^1$ and bijective function
$$\phi\colon \R^m_{>0}\to V\cap \R^s_{>0},$$
$$\bar x=(\bar x_1, \dots, \bar x_m) \mapsto (\phi_1 (\bar x),\dots, \phi_n(\bar x)),$$ for some $m<s$. 
Proposition~\ref{prop:parametrizationmessi} also features the form of the system when we replace the concentration at steady state 
of the species by its parametrization into the conservation laws, which is our procedure when we apply our results to the question of determining regions
of multistationarity of biochemical reaction networks. 

\begin{prop}\label{prop:parametrizationmessi} Let $G$ be the underlying digraph of a MESSI system with  
fixed reaction rate constants $\kappa$. Consider a minimal partition of the set of species as in \eqref{eq:partition}
and the associated digraphs $G_2$ and $G_E$ defined above. 
Suppose that the system is $s$-toric, $G_E$ has no directed cycles and assume that any pair of nodes in the same connected component 
of $G_2$ are connected by a unique simple path.\footnote{A simple path is a path that visits each vertex exactly once.}

Choose $m$ species $X_{i_1},\dots,X_{i_m}$, such that $X_{i_{\alpha}}\in\Sp^{(\alpha)}$ for $\alpha=1,\dots,m$.
Then, 
there exists an explicit basis of $m$ conservation laws with coefficients $0,1$
and a positive monomial parametrization of the concentrations of the species 
at steady state in terms of the $m$ concentration variables $x_{i_1},\dots,x_{i_m}$. Moreover, if we replace 
the concentrations of the species by its parametrization in these conservations laws we obtain a system of the form:
\begin{equation}\label{conslawparam}
 \ell_{\alpha}(x,\kappa):= \sum_{j=1}^n \varphi_{\alpha,j}(\kappa)x^{a_j}=T_{\alpha}, \quad \alpha=1,\dots,m,
 \end{equation}
where $x=(x_{i_1},\dots,x_{i_m})$, with $a_j\in\Z^m- \{0\}$ for each $j=1,\dots, n$, 
for some constants $T_{\alpha}$ which are positive if the trajectory intersects the positive orthant for each $\alpha=1,\dots,m$. Here
$\varphi_{\alpha,j}(\kappa)$ is a positive rational function in the reaction rate constants for each $\alpha=1,\dots,m$ and $j=1,\dots,n$.
\end{prop}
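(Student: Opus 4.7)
My plan is to deduce the three assertions---existence of a $\{0,1\}$-basis of conservation laws, existence of a positive monomial parametrization, and the stated form after substitution---by combining the structural results on $s$-toric MESSI systems from~\cite{aliciaMer} with a direct computation.

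First, I would invoke the conservation-law construction of~\cite{aliciaMer}: under the minimality of the partition~\eqref{eq:partition} and the acyclicity of $G_E$, an explicit basis of $m$ conservation laws is given by $\ell_\alpha = \sum_{X_j \in \Sp^{(\alpha)}} x_j + \sum_{U_k \in I(\alpha)} u_k$ for $\alpha = 1, \ldots, m$, where $I(\alpha)$ is the set of intermediates $U_k$ whose unique source core complex (existing by the $s$-toric hypothesis that every intermediate has a unique source) involves a species of $\Sp^{(\alpha)}$. All coefficients lie in $\{0,1\}$, and linear independence follows from the disjointness of the partition together with the fact that distinct intermediates have distinct sources.

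Second, I would construct the parametrization in two stages. The intermediate species at steady state satisfy binomial equations $u_k = \mu_k(\kappa)\, x^{y^{(k)}}$, where $y^{(k)}$ is the core complex with $y^{(k)} \uri U_k$ and $\mu_k$ is a positive rational function of $\kappa$. For core species within the connected component of $G_2^\circ$ corresponding to some $\Sp^{(\alpha)}$, I would use the uniqueness of simple paths between nodes to traverse the path from $X_{i_\alpha}$ to any $X_j \in \Sp^{(\alpha)}$, turning each edge of the path into a binomial relation. The $s$-toric condition on tree-constant ratios ensures that the resulting quotient $x_j / x_{i_\alpha}$ is a positive rational function in $\kappa$ times a monomial in the concentrations of species from the other classes $\Sp^{(\beta)}$ appearing in the edge labels. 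The acyclicity of $G_E$ provides a topological order on $\Sp^{(1)}, \ldots, \Sp^{(m)}$, along which an iterative substitution expresses every concentration as a positive rational function of $\kappa$ times a monomial (with exponents in $\Z$) in $x_{i_1}, \ldots, x_{i_m}$.

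Third, I substitute this parametrization into each equation $\ell_\alpha = T_\alpha$. Since $\ell_\alpha$ is a sum of concentrations with coefficients $0$ or $1$, the substitution yields an expression $\sum_j \psi_j(\kappa)\, x^{a_j} = T_\alpha$; collecting terms with equal exponent vectors $a_j$ gives the stated form~\eqref{conslawparam}, where each $\varphi_{\alpha,j}(\kappa)$ is a sum of positive rational functions in $\kappa$, hence itself positive and rational.

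The hard part is the second step: verifying that the parametrization is well-defined and genuinely monomial with positive rational coefficients in $\kappa$. The $s$-toric condition is what controls the ratios of tree-monomials in $G_2^\circ$, while the uniqueness of simple paths is what guarantees that the ratio obtained is independent of which path is traversed in the component; without either hypothesis the parametrization could be multi-valued or have non-positive or non-rational coefficients. The acyclicity of $G_E$ is needed for the iterative substitution of parametrizations of $\Sp^{(\beta)}$-species into those of $\Sp^{(\alpha)}$-species to terminate instead of looping.
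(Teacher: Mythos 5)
Your proposal follows essentially the same route as the paper's proof: it invokes the conservation-law basis of \cite{aliciaMer} (Theorem~12 there), eliminates intermediates via the binomials $u_k=\mu_k(\kappa)\,x^{y}$ (Proposition~27), parametrizes the core species recursively along the acyclic order on $G_E$ using the unique-simple-path hypothesis to obtain monomial rather than merely rational expressions (Theorems~21 and~28), and concludes by substitution into the conservation laws. The only cosmetic difference is that the paper derives the core-species relations from chains of simple cycles in $G_2$ rather than edge-by-edge along a path, but both rest on the same cited machinery and the argument is sound.
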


Now, we state the main result of this section, which guarantees that the rescaling of the original parameters $\kappa$ can always be done in our setting. Its proof
can be implemented as an  algorithm.

\begin{thm}\label{th:reescalamientos} Let $G$ be the underlying digraph of a MESSI system. Consider a minimal partition of the set of species, 
and the associated digraphs $G_2$ and $G_E$ defined as before. Suppose that the system is $s$-toric, $G_E$ has no directed cycles and assume 
that any pair of nodes in the same connected component of $G_2$ are connected by a unique simple path. 
Fix $m$ species $X_{i_1},\dots,X_{i_m}$, such that $X_{i_{\alpha}}\in\Sp^{(\alpha)}$ for $\alpha=1,\dots,m$ 
and consider the parametrization and the system~\eqref{conslawparam} obtained in Proposition~\ref{prop:parametrizationmessi}.

Given $\gamma\in\R^{n+1}_{>0}$, 
 reaction rate constants $\kappa$ and total conservation constants $T_{\alpha}>0$, there exists a choice of positive reaction rate constants 
$\bar{\kappa}$ such that the positive solutions of the system
\begin{equation}\label{eq:sistemacongamma}
 \sum_{j=1}^n \gamma_j\varphi_{\alpha,j}(\kappa)x^{a_j}-\gamma_{n+1}T_{\alpha}=0, \quad \alpha=1,\dots,m,
 \end{equation}
 are in bijection with the positive solutions of 
 \begin{equation}\label{eq:sistemasingamma}
 \ell_{\alpha}(x,\bar \kappa)-T_{\alpha}=\sum_{j=1}^n \varphi_{\alpha,j}(\bar \kappa)x^{a_j}-T_{\alpha}=0, \quad \alpha=1,\dots,m.
 \end{equation}
 Moreover, the reaction rate constants $\bar \kappa$ can be obtained from the original  constants $\kappa$ by scaling only  the rate 
 constants of those reactions coming out from a core complex. 
\end{thm}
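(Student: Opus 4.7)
The plan is to reduce the statement to an algebraic rescaling condition on the rational coefficients $\varphi_{\alpha,j}(\kappa)$, and then to realize those scalings by modifying only the rate constants of reactions coming out of core complexes, proceeding inductively along a topological ordering of the DAG $G_E$.

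First, I would divide the $\alpha$-th equation in~\eqref{eq:sistemacongamma} by $\gamma_{n+1}$, so that the desired bijection between positive solutions of~\eqref{eq:sistemacongamma} and~\eqref{eq:sistemasingamma} follows immediately once
\[
\varphi_{\alpha,j}(\bar\kappa)=\frac{\gamma_j}{\gamma_{n+1}}\,\varphi_{\alpha,j}(\kappa),\quad \alpha=1,\dots,m,\ j=1,\dots,n,
\]
because the two systems then become literally identical. So the task reduces to exhibiting $\bar\kappa>0$ realizing those prescribed coefficient ratios, with the additional constraint that only outgoing rate constants from core complexes are altered.

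Next, I would unpack $\varphi_{\alpha,j}(\kappa)$ using Proposition~\ref{prop:parametrizationmessi} and the underlying results of~\cite{aliciaMer}. In an $s$-toric MESSI system, every species $X_k$ admits a positive monomial parametrization $x_k=\rho_k(\kappa)\,x^{b_k}$ in the chosen free variables $x_{i_1},\dots,x_{i_m}$, where $\rho_k(\kappa)$ is a positive rational expression built from ratios of spanning tree products in $G_2^\circ$ (for core species, by the $s$-toric hypothesis~iii)) and from the intermediate-elimination formulas involving the $\mu_k$'s of~\eqref{eq:tau} (for intermediate species). Consequently $\varphi_{\alpha,j}(\kappa)$ is the sum of the $\rho_k(\kappa)$ over those $X_k\in\Sp^{(\alpha)}$ with $b_k=a_j$, and since the target scaling factor $\gamma_j/\gamma_{n+1}$ depends only on $j$ and not on $\alpha$, it suffices to exhibit $\bar\kappa$ that multiplies each $\rho_k(\kappa)$ with $b_k=a_j$ by this common factor.

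Finally, I would proceed inductively along a topological ordering of the components of $G_E$, which exists because $G_E$ has no directed cycles. At each component $\Sp^{(\alpha)}$ I would rescale only the rate constants of reactions emerging from core complexes supported on species of $\Sp^{(\alpha)}$. This is manageable because the uniqueness of simple paths in each connected component of $G_2$ makes the dependence of the $\rho_k$'s on outgoing constants from $\Sp^{(\alpha)}$ explicit and multiplicative, while the DAG structure of $G_E$ ensures that such rescalings do not perturb the coefficients assigned in earlier steps. The main obstacle is to verify that at each step there is enough independent multiplicative freedom among the outgoing rate constants to simultaneously realize the required scalings $\gamma_j/\gamma_{n+1}$ for all exponents $a_j$ first seen at that step; this rests on tracking how those outgoing constants appear in the spanning tree monomials $c^T$ and in the formulas $\mu_k$, and confirming that distinct exponents $a_j$ correspond to independently adjustable scalings. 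Positivity of $\bar\kappa$ is then automatic, since every target factor $\gamma_j/\gamma_{n+1}$ is positive.
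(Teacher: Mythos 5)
Your overall strategy --- reduce to prescribed multiplicative scalings of the coefficients $\varphi_{\alpha,j}$ and realize them inductively along a topological order of the DAG $G_E$, exploiting the unique--simple--path hypothesis for multiplicativity inside each component of $G_2$ --- is the same as the paper's. But there are two genuine gaps. The first is in your opening reduction, which is too strong and in fact unachievable: you ask for $\varphi_{\alpha,j}(\bar\kappa)=\tfrac{\gamma_j}{\gamma_{n+1}}\varphi_{\alpha,j}(\kappa)$ for \emph{every} $j$, so that the two systems become literally identical. However, the chosen free variable $x_{i_{\alpha}}$ is itself one of the monomials occurring in the $\alpha$-th equation of \eqref{conslawparam}, and its coefficient there is identically $1$ (possibly plus further positive terms), independently of $\kappa$; no choice of $\bar\kappa$ can force that coefficient to equal an arbitrary prescribed positive number (for instance one less than $1$ when no other species contributes this monomial). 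This is precisely why the theorem asserts a \emph{bijection} of positive solution sets rather than an equality of systems: the paper first normalizes $\gamma_{n+1}=1$ and then absorbs the multiplier of each $x_{i_{\alpha}}$ by the change of variables $\bar x_{i_{\alpha}}=\gamma_{\alpha}x_{i_{\alpha}}$, reducing to the case where these distinguished monomials carry coefficient $1$. Without this (or an equivalent) step your reduction fails at the very first monomial of each equation.

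The second gap is that what you explicitly defer as ``the main obstacle'' --- verifying that the outgoing rate constants of core complexes provide enough \emph{independent} multiplicative freedom, consistently across the several equations in which a given monomial $x^{a_j}$ may appear --- is the actual content of the proof, not a routine check. The paper resolves it with specific machinery: Lemma~\ref{lemma:multiplykappa} shows that scaling all rate constants out of a core complex $y$ by $\ell_y$ scales every $\mu_k$ with $y\uri U_k$ and every $\tau$ labelling an edge out of $y$ in $G_1$ by the same factor $\ell_y$; Lemma~\ref{lemma:concentrationcore} expresses each core concentration as a product of ratios $\tau_{j,1}/\tau_{j,2}$ along the unique chain of simple cycles joining it to $x_{i_{\alpha}}$; and Definition~\ref{def:Mq} organizes the reactant core complexes of each $\Sp^{(\alpha)}$ into sets $M_0,M_0',M_1,M_1',\dots$ that are processed in order, with an explicit coherence check that rescaling a complex appearing later in a cycle does not disturb the coefficients already matched (the ratios $\tau_{j,1}/\tau_{j,2}$ pick up compensating factors). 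Until you carry out this bookkeeping --- in particular, showing that all species contributing the same monomial $x^{a_j}$ are rescaled by one common factor while distinct monomials can be rescaled independently --- the proposal is an outline of the right plan rather than a proof.
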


The proofs of these two results are given below. First, we show in the network of  
Example~\ref{ej:mixedphospho} which parameters we rescale in the proof of Theorem~\ref{th:reescalamientos}.

\begin{ej}[Example~\ref{ej:mixedphospho}, continued] It is easy to check that the network of 
Example~\ref{ej:mixedphospho} with the MESSI structure defined before is an $s$-toric MESSI systems and satisfies the hypotheses of 
Theorem~\ref{th:reescalamientos}. In the proof of this theorem, we show that it is sufficient to rescale the parameters 
$k_1$, $k_4$ and $k_7$, which are the rate constants of reactions coming out from the core complexes $S_0+E$, $S_1+E$ and $S_2+F$ respectively.

\end{ej} 

We need to introduce the following sets, as in the proof of Theorem~21 of \cite{aliciaMer}. 

\begin{defi}\label{def:Lk} Let $G$ be the underlying digraph of a MESSI system. 
Consider a minimal partition of the set of species as in~\eqref{eq:partition} and the associated digraph $G_E$. 
We define the following subsets of indices:
\begin{align*}
L_0=&\{\beta \geq 1 : \text{indegree} \text{ of }\Sp^{(\beta)}\text{ is }0\},\ \text{and for } k\geq 1:\\
L_k=&\{\beta \geq 1: \text{for any edge }  \Sp^{(\gamma)}\to\Sp^{(\beta)}\text{ in }G_E \text{ it holds that }
\gamma \in L_t, \text{ with }  t<k \}\backslash \underset{t=0}{\overset{k-1}{\bigcup}} L_t.
\end{align*}
\end{defi} 

\begin{ej}[Example~\ref{ej:mixedphospho}, continued] The subsets $L_k$, $k\geq 0$ of
 Definition~\ref{def:Lk} for the network of Example~\ref{ej:mixedphospho} with the 
 MESSI structure defined before are: $L_0=\{1,2\}$ and $L_1=\{3\}$ (see the corresponding digraph $G_E$ at Figure~\ref{fig:ejdigraphsmixedphospho}).
\end{ej}

\subsection{The proofs}
We will need a series of remarks and technical lemmas in order to prove our
main result Theorem~\ref{th:reescalamientos} that ensures that the general method developed in Section~\ref{sec:2} can be applied to the
determination of  regions of multistatinarity for {\em any} $s$-toric MESSI system.  We introduce
new ideas, but unluckily our results lie heavily on the machinery developed
in~\cite{aliciaMer} and they require the reader to consult that paper. We will need some combinatorial definitions that we will recall succintly. 

We first give the proof of Proposition \ref{prop:parametrizationmessi}.

\begin{proof}[Proof of Proposition \ref{prop:parametrizationmessi}  ] 

Suppose that we have a minimal partition of the set of species as 
in~\eqref{eq:partition}.
With our assumptions, the hypotheses of Theorem~12 in \cite{aliciaMer} are satisfied, then there exist $m$ conservation laws of the form 
\begin{equation}\label{eq:consalpha}
\ell_{\alpha}(u,x)\, = \, T_{\alpha}, \text{ where } 
\ \ell_{\alpha}(u,x)= \sum_{X_j \in \Sp^{(\alpha)}} x_j + \sum_{k\in\intal(\alpha)} u_k,\quad \alpha=1,\dots,m,
\end{equation}
for some constants $T_{\alpha}$, which are positive if the trajectory intersects 
the positive orthant for each $\alpha=1,\dots,m$, and where  $\intal(\alpha)\subset\{1,\dots,p\}$ is the following set of indices:
\begin{equation*}\label{eq:intal}
 \intal(\alpha)=\{k: \exists\ y\uri U_k,\, \text{with}\ 
y\  \text{core complex with one species belonging to }\, \Sp^{(\alpha)}\}.
\end{equation*}
Because the system is an $s$-toric MESSI system, by Proposition~27 of \cite{aliciaMer}, we can obtain the concentration of the intermediate species 
at steady state in terms of the concentrations of the core species.
 That proposition states 
that there are (explicit) rational functions $\mu_k(\kappa)\in\Q(\kappa), 1\le k\le p$,  such that at steady state:
\begin{equation}\label{eq:monom}
 u_k( {\boldsymbol x}) \, = \, \mu_k(\kappa) \, {\boldsymbol x}^{y}, \quad k=1,\dots, p,
\end{equation}
where here {\boldmath{$x$}} denotes the vector of variables corresponding 
to the concentration of core species and $y$ is the unique core complex reacting through intermediates to $U_k$ (here we identify the complex $y$ with the corresponding vector in $\Z_{\geq 0}^s$).
Also, as $G_2$ is weakly reversible and $G_E$ has no direct cycles, we can apply Theorem~21 of \cite{aliciaMer} to obtain a rational parametrization of 
the concentration of the core species. 
Consider the subsets $L_k$, $k\geq 0$ as in Definition~\ref{def:Lk}. 
Observe that the set $L_0$ is not empty because $G_E$ has no direct cycles. 
Fix $x_{i_{\alpha}}\in\Sp^{(\alpha)}$ for each $\alpha=1,\dots,m$. 
In the proof of Theorem~21 of \cite{aliciaMer} it is shown that we can then parametrize all the species of $\Sp^{(\alpha)}$ for $\alpha\in L_k$ 
in terms of $x_{i_{\alpha}}$, species corresponding to core subsets in 
$L_t$ with $t<k$ and the rate constants $\tau$. Under the assumption that any pair of nodes in the same connected component of $G_2$ 
is connected by a single simple path, we can show that this parametrization is a positive monomial 
parametrization, using Theorem~28 of \cite{aliciaMer}.

Then, the concentration of a core
species at steady state can be written as a monomial in terms of the variables $x_{i_{\alpha}}$, for $\alpha=1,\dots,m$ 
and the rate constants $\kappa$, and using this and \eqref{eq:monom}, the same holds for any intermediate species. 
We denote by $\{a_1,\dots,a_n\}$ the different monomials that appear in this monomial parametrization. We replace this parametrization in the conservations 
laws \eqref{eq:consalpha} and we get a system as in \eqref{conslawparam},
where $\varphi_{\alpha,j}(\kappa)$ is the sum of the coefficients in the parametrization of the species that appear 
in the $\alpha$-th conservation law and have the monomial $x^{a_j}$, that is, $\varphi_{\alpha,j}(\kappa)$ is a positive rational function depending on the reaction rate constants $\kappa$.

\end{proof}

In order to prove Theorem \ref{th:reescalamientos} we need some lemmas. 
The following lemma shows how the values of $\tau(\kappa)$ and $\mu_k(\kappa)$ for $k=1,\dots,p$, 
depending on the reaction rate constants $\kappa$, are modified if we consider new rate constants $\bar \kappa$ 
obtained from $\kappa$ after scaling by a positive number all constants in a reaction coming out from a core complex.

\begin{lemma}\label{lemma:multiplykappa} Let $G$ be the underlying digraph of an $s$-toric MESSI system, 
with reaction rate constants $\kappa$, $\mu_k(\kappa)$ as in \eqref{eq:monom}. Fix $\ell_y\in\R_{>0}$ 
for each $y$ core complex. Consider the following reaction rate constants $\bar \kappa$ obtained from the rate constants $\kappa$: 
\begin{equation}\label{eq:barkappa}
 \bar \kappa_{yy'}=\left\lbrace \begin{array}{l c l}
                           \ell_y\kappa_{yy'} & \text{if} & y \ \text{is a core complex, } \\
                           \kappa_{yy'} & \text{if} &  y \text{ is not a core complex.}
                          \end{array}\right.
\end{equation}
That is, we multiply the reactions rate constants coming out from a core complex (we multiply by $\ell_y$ 
if the core complex is $y$) and we keep fixed the other rate constants (the constants coming 
out from an intermediate complex). Then, for each $k=1,\dots,p$ we have
\begin{equation}\label{eq:barmu}
 \mu_k(\bar \kappa)=\ell_y\mu_k(\kappa) \ \text{ if }\ y \text{ is the unique complex core such that } y\uri U_k.
\end{equation}
Consequently, if $y\xrightarrow{\tau} y'$ is in $G_1$, then 
\begin{equation}\label{eq:bartau}
\tau(\bar \kappa) = \ell_y\tau(\kappa).
\end{equation}
\end{lemma}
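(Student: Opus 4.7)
My plan is to first prove the scaling identity \eqref{eq:barmu} for the $\mu_k$ via a linearity argument on the steady-state equations for intermediates, and then to deduce \eqref{eq:bartau} from it by a direct substitution into \eqref{eq:tau}.

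For the first step, I will exploit the $s$-toric hypothesis, which guarantees that each intermediate complex $U_k$ has a \emph{unique} core source $y(k)$, i.e., a unique core complex $y$ with $y \uri U_k$. This uniqueness propagates along intermediate-to-intermediate reactions: whenever $U_j \to U_k$ is a direct edge between two intermediate complexes, necessarily $y(j)=y(k)$ (otherwise $U_k$ would admit two distinct core sources). Consequently, the steady-state system $\dot u = 0$ decouples into subsystems indexed by the core complexes $y$ that reach some intermediate. For each such $y$, I will write the subsystem as a linear equation
\[
M^{(y)} u^{(y)} \;=\; -\, b^{(y)}\, x^y,
\]
where $u^{(y)}$ is the vector of concentrations of intermediates with source $y$, the matrix $M^{(y)}$ collects the labels of reactions coming out of intermediate complexes (the $\kappa_{U_j,U_k}$ and the outgoing totals $\kappa_{U_k,z}$ with $z$ any complex), and $b^{(y)}$ collects the labels $\kappa_{y,U_k}$ of direct reactions from $y$ to intermediates. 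Under the rescaling \eqref{eq:barkappa}, the matrix $M^{(y)}$ is unchanged because none of its entries is the label of a reaction coming out of a core complex, whereas $b^{(y)}$ is multiplied by the common factor $\ell_y$. By linearity, the unique solution $u^{(y)}$ is also multiplied by $\ell_y$, which is precisely \eqref{eq:barmu}.

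The second step is essentially a substitution. Let $y \to y'$ be a reaction in $G_1$, so $y$ is a core complex. In \eqref{eq:tau}, the first term $\kappa_{yy'}$ becomes $\ell_y \kappa_{yy'}$ under the rescaling. Every summand $\kappa_k \, \mu_k(\kappa)$ in $\sum_{k=1}^p \kappa_k \, \mu_k(\kappa)$ corresponds to an intermediate $U_k$ with $y \uri U_k$ and $U_k \to y'$ in $G$; hence $y(k)=y$, so $\mu_k(\bar\kappa) = \ell_y \mu_k(\kappa)$ by the first step, while $\kappa_k = \bar\kappa_k$ because it labels a reaction coming out of an intermediate. Factoring out $\ell_y$ yields $\tau(\bar\kappa) = \ell_y \tau(\kappa)$.

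The delicate point is entirely in the first step: one must verify that $M^{(y)}$ really contains no label of a reaction coming out of a core complex, and that the subsystems associated with distinct sources $y$ are genuinely decoupled. Both facts rest squarely on the $s$-toric assumption that each intermediate admits a unique core source; once they are established, the conclusion is a one-line linearity argument, and \eqref{eq:bartau} then drops out immediately.
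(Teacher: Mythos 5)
Your argument is correct, but it takes a genuinely different route from the paper. The paper follows Proposition~27 of \cite{aliciaMer} and Theorem~2 of \cite{fw13}: it collapses all core complexes into a single vertex $*$ to form a linear labelled digraph $\widehat G$ on $\Sp^{(0)}\cup\{*\}$, invokes the Matrix-Tree Theorem to write $\mu_k(\kappa)=\rho_k/\rho$ as a ratio of sums of products of edge labels over $U_k$-trees and $*$-trees, and then observes that every $*$-tree uses only labels of reactions out of intermediates (hence $\rho$ is invariant), while every $U_k$-tree contains exactly one label of a reaction out of a core complex, necessarily out of the unique $y$ with $y\uri U_k$ (hence $\rho_k$ scales by $\ell_y$). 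Your proof replaces this combinatorial bookkeeping by a homogeneity argument on the steady-state equations themselves: the $s$-toric uniqueness of the core source decouples $\dot u=0$ into blocks $M^{(y)}u^{(y)}=-b^{(y)}x^{y}$ with $M^{(y)}$ built only from intermediate-outgoing labels and $b^{(y)}$ only from $y$-outgoing labels, and linearity does the rest; your propagation claim ($U_j\to U_k$ forces $y(j)=y(k)$) and the claim that no core complex other than $y(k)$ can react directly to $U_k$ both follow immediately from uniqueness of the core source, so the decoupling is sound. What the paper's route buys is that it reuses machinery already established in \cite{aliciaMer,fw13} and makes visible exactly which single core-outgoing label each tree carries; what your route buys is a shorter, more elementary argument that avoids the Matrix-Tree Theorem entirely. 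The only point you leave implicit is the invertibility of $M^{(y)}$ (uniqueness of the solution), but this is guaranteed by the MESSI requirement that every intermediate reacts via intermediates to some core complex, and is in any case presupposed by the existence of the $\mu_k$ in \eqref{eq:monom}, so it is not a gap. The deduction of \eqref{eq:bartau} from \eqref{eq:tau} is the same in both proofs.
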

\begin{proof}  Following the proofs of Proposition~27 in \cite{aliciaMer} and Theorem~2 in \cite{fw13}, 
we recall how to obtain the constants 
$\mu_k(\kappa)$ for fixed reaction rate constants $\kappa$. They build a new linear labeled directed graph
$\widehat{G}$ with node set $\Sp^{(0)}\cup\{*\}$, which consists of
collapsing all core complexes into the vertex $*$,
and labeled directed edges that are obtained from hiding the
core complexes in the labels. For example, $X_i+X_j\overset{\kappa}{\rightarrow} U_k$
becomes $*\overset{\kappa x_ix_j}{\longrightarrow}U_k$ and $U_k\overset{\kappa'}{\rightarrow} X_i+X_j$
becomes $U_k\overset{\kappa'}{\longrightarrow}*$. 

They show, using the Laplacian of a graph and the Matrix-tree Theorem (see \cite{laplacian,tutte}), that 
\[\mu_k(\kappa) =\rho_k/\rho,\]
for any $k=1, \dots, p$, where 
\begin{equation*}
\rho_k=\underset{T\; an \; {{U_k}-tree}}{\sum}c^{T},\quad \rho=\underset{T\; an \; *-tree}{\sum}c^{T}.
\end{equation*}

It is easy to check that every $*$-tree involves labels in 
$\Q[\kappa]$, and only labels from edges coming out from an intermediate complex. 
As the system is $s$-toric, for every intermediate complex formed with the intermediate species $U_k$, there is a unique
core complex $y$ such that $y\uri U_k$. Then, every $U_k$-tree involves labels in terms of $\kappa$ 
and the concentrations of the species that form $y$. 
Moreover, as there must be a path from $*$ to $U_k$ in each $U_k$-tree, then, a label from an edge 
coming out from $y$ necessarily appears in each tree (and is the unique label 
from an edge coming out from a core complex). Then, if we consider the constants $\bar \kappa$, each 
label from an edge coming out from $y$ is multiplied by $\ell_y$ and then
 $\mu_k(\bar \kappa)=\ell_y\mu_k(\kappa) \ \text{ if }\ y \uri U_k$, as wanted.
 The expression of the constants $\tau (\bar \kappa)$ follows from~\eqref{eq:tau}.
\end{proof}


In the following lemma we give in more detail the form of the positive parametrization given in
 Proposition~\ref{prop:parametrizationmessi}. 

\begin{lemma}\label{lemma:concentrationcore} With the hypotheses of Theorem~\ref{th:reescalamientos}, 
fix $X_{i_{1}},\dots,X_{i_{m}}$ species as in Proposition~\ref{prop:parametrizationmessi}, 
with $X_{i_{\alpha}}\in\Sp^{(\alpha)}$, for each $\alpha=1,\dots,m$.
Take any other species $X_{i}\in\Sp^{(\alpha)}$ with $\alpha\in L_k$, $X_{i}\neq X_{i_{\alpha}}$, 
with $L_k$ as in Definition~\ref{def:Lk}. Then, the concentration of $X_{i}$ in terms of $x_{i_1},\dots,x_{i_m}$ can be expressed in the form:
\begin{equation}\label{concentrationofcore}
x_{i}=\phi(\tau)\, x_{i_{\alpha}}\, \underline{x}^{a},
\end{equation}
for some $\phi(\tau)\in\Q(\tau)$, where ${\underline{x}}^{a}$ is a monomial that depends only on variables $x_{i_{\beta}}$ with ${\beta}\in L_t$, with $t<k$.
Moreover, $\phi(\tau)$ has the form
\begin{equation}\label{concentrationofcore2}
\phi(\tau)=\left(\prod_{j=1}^q \dfrac{ \tau_{j,1}}{\tau_{j,2}}\right) g({\tau'})
\end{equation}
for some $q\geq 1$, where $g({\tau'})$ 
is a rational function of the constants ${\tau'}$, with ${\tau'}$ the label of edges of connected components of 
$G_2$ corresponding to $\Sp^{(\beta)}$, with $\beta \in L_t$, with $t<k$, 
and $\tau_{j,1}, \tau_{j,2}$ label of edges of the connected component of $G_2$ corresponding to $\Sp^{(\alpha)}$, for each $j=1,\dots,q$.
\end{lemma}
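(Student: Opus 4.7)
The plan is to proceed by induction on $k$, the level in the filtration $L_0,L_0\cup L_1,\dots$ introduced in Definition~\ref{def:Lk}. The key structural input, which I would establish first, is that under our hypotheses the underlying undirected graph of each connected component of $G_2$ is a tree in which every directed edge appears together with its reciprocal: a directed cycle of length at least three inside a connected component would yield two distinct simple directed paths between two of its vertices, contradicting the uniqueness of simple paths assumed in the hypotheses.

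From this I would deduce that detailed balance holds on each edge of $G_2^\circ$: since removing any edge disconnects the tree, the net flux across that edge must vanish at a positive steady state (this can also be read off directly from the Matrix-Tree formulas in Theorem~21 of \cite{aliciaMer}, specialized to a tree). Consequently, for any two species $X_a,X_b$ in the same connected component, telescoping along the unique simple directed path $X_a = Y_0 \to Y_1 \to \cdots \to Y_q = X_b$ in $G_2$ gives
\[\frac{x_b}{x_a} \, = \, \prod_{j=1}^q \frac{\lambda(Y_{j-1}\to Y_j)}{\lambda(Y_j\to Y_{j-1})},\]
where $\lambda$ denotes the edge label in $G_2$. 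Applied to $X_a = X_{i_\alpha}$ and $X_b = X_i$, both in $\Sp^{(\alpha)}$ with $\alpha\in L_k$ (so $q\ge 1$ since $X_i\ne X_{i_\alpha}$), and using that each edge label in $G_2$ has the form $\tau$ (if the edge comes from a monomolecular reaction of $G_1$) or $\tau\,x_c$ with $X_c\in \Sp^{(\beta)}$ for some $\beta\ne \alpha$ (if it comes from a bimolecular one), we can rearrange to
\[x_i \, = \, x_{i_\alpha}\,\prod_{j=1}^q \frac{\tau_{j,1}}{\tau_{j,2}}\,\prod_{j}\frac{x_{c_j}}{x_{d_j}},\]
where the $\tau_{j,1},\tau_{j,2}$ are labels of edges in the component of $\Sp^{(\alpha)}$. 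By the definition of $L_k$, any $\beta$ for which $\Sp^{(\beta)}\to \Sp^{(\alpha)}$ is an edge of $G_E$ satisfies $\beta\in L_t$ with $t<k$; hence each $x_{c_j},x_{d_j}$ is the concentration of a species in a component of strictly lower level.

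To finish, I would invoke the inductive hypothesis on every $x_{c_j}$ and $x_{d_j}$ that is not already one of the distinguished variables $x_{i_\beta}$, rewriting it as $\phi_{c_j}(\tau)\,x_{i_\beta}\,\underline{x}^{\,a_j}$ with $\phi_{c_j}$ a rational function of the labels of components of level strictly less than $k$ and $\underline{x}^{\,a_j}$ a monomial in the $x_{i_\gamma}$ with $\gamma$ of level $<k$. Substituting and collecting terms, the rate-constant factors produced by the induction assemble into a single rational function $g(\tau')$ of labels of components $\Sp^{(\beta)}$ with $\beta\in L_t$, $t<k$, and the variable factors assemble into a single monomial $\underline{x}^{\,a}$ in the $x_{i_\gamma}$'s with $\gamma\in L_s$, $s<k$. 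This matches the asserted form, with $\phi(\tau)=\bigl(\prod_{j=1}^q \tau_{j,1}/\tau_{j,2}\bigr)\,g(\tau')$. The main obstacle is the bookkeeping at this last step: one must verify that the substitutions coming from the inductive hypothesis do not contaminate the leading factor $\prod_{j=1}^q \tau_{j,1}/\tau_{j,2}$ associated with the path in the component of $\Sp^{(\alpha)}$. This is ensured because the induction only introduces factors built from labels $\tau'$ of strictly lower-level components, which are disjoint from those of the component of $\Sp^{(\alpha)}$.
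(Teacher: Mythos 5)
There is a genuine gap at the very first step. Your structural claim --- that each connected component of $G_2$ is an undirected tree in which every directed edge comes with its reciprocal --- is false under the stated hypotheses, and your justification for it does not work: a directed cycle $A\to B\to C\to A$ of length three is weakly reversible and has a \emph{unique} simple directed path between every ordered pair of its vertices (e.g.\ the only simple path from $B$ to $A$ is $B\to C\to A$), so it does not produce two distinct simple paths between any two vertices. The paper's own Example~\ref{ej:mixedphospho} is exactly such a case: the component of $G_2$ corresponding to $\Sp^{(3)}$ is the directed $3$-cycle $S_0\xrightarrow{e\tau_1}S_1\xrightarrow{e\tau_2}S_2\xrightarrow{f\tau_3}S_0$, and the paper asserts this network satisfies all hypotheses of Theorem~\ref{th:reescalamientos}. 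Consequently your next step also fails: detailed balance does not hold edge-by-edge on such a component (there are no reverse edges to balance against), and your telescoping formula $x_b/x_a=\prod_j \lambda(Y_{j-1}\to Y_j)/\lambda(Y_j\to Y_{j-1})$ is not even well defined there. What the hypothesis of unique simple paths actually gives (and what the paper's proof uses) is that two distinct simple cycles of a component can share at most one node, so the component is a ``cactus'' of directed cycles glued at single vertices; the correct steady-state relations come from equating the circulation around each cycle, yielding $\tau_{j,1}x_{h_{j,1}}z_{j-1}=\tau_{j,2}x_{h_{j,2}}z_j$ for consecutive labels \emph{within the same cycle} (via Theorem~28 of \cite{aliciaMer}), and the chain $Z_0=X_{i_\alpha},Z_1,\dots,Z_q=X_i$ is built by passing from cycle to cycle through the shared nodes, not by walking a tree.

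The remainder of your argument --- the induction on the level $k$ of the filtration $L_0,L_1,\dots$, the observation that the auxiliary concentrations appearing in the edge labels belong to strictly lower levels by the definition of $L_k$ and the acyclicity of $G_E$, and the bookkeeping showing that the recursive substitutions only contribute factors $g(\tau')$ from lower-level components and a monomial in the $x_{i_\beta}$ --- is sound and matches the paper's concluding ``recursive argument''. But as written the proof only covers the special case where every component of $G_2$ happens to be a tree with reciprocal edges (true for the sequential phosphorylation networks, not in general), so the central step needs to be replaced by the cycle-flux argument above.
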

\begin{proof} We have that any pair of nodes in the component of $G_2$ corresponding to $\Sp^{(\alpha)}$ 
are connected by a unique simple path. Then, two different (simple) 
cycles can only share a node in common (if there are two nodes in common, there will be more 
than a single path connecting one of the nodes to the other, a contradiction).
For each species $X_j\in \Sp^{(\alpha)}$, we consider the set of cycles in the subgraph $G_2$ that have $X_j$ as a node, that is:
\[\mathcal{C}(X_j)=\{C: C \text{ is a (simple) cycle with } X_j \text { a node of } C  \}.\]
Observe that these sets are nonempty because $G_2$ is weakly reversible by hypothesis.
Now, we define the following subsets of ${\Sp}^{(\alpha)}$.
\begin{align*}
N_0=&\{X_{i_{\alpha}}\},\\ 
N_q=&\{X_j \in \Sp^{(\alpha)}: X_j\in C, \text{ for some }\, C\in \mathcal{C}(X_{j'}), \text{ with } 
X_{j'}\in N_{q-1}\}\backslash \underset{t=0}{\overset{q-1}{\bigcup}} N_t,\  q \geq 1.
\end{align*}
Suppose that $X_{i}\in N_q$, for some $q\geq 1$. By hypothesis, there is a unique simple path between
 two nodes species in $\Sp^{(\alpha)}$, so there exist unique 
species $Z_0=X_{i_{\alpha}}, Z_1,\dots,Z_q=X_{i}$, such that $Z_{j}\in \mathcal{C}(Z_{j-1})$, for $j=1,\dots, q$.
Then, there exist $q$ cycles in $G_2$:
\begin{figure}[h]
\centering
\begin{tikzpicture}
\node[]at(-1,1)(a){$Z_{0}$};
\node[]at(0.2,1)(b){$\cdots$};
\node[]at(1.4,1)(c){$Z_1$};
\node[]at(0.2,0)(d){$\cdots$};
\node[]at(2.6,1)(e){$\cdots$};
\node[]at(3.8,1)(f){$Z_2$};
\node[]at(2.6,0)(g){$\cdots$};
\node[]at(5,0.5)(h){$\cdots$};
\node[]at(6.2,1)(i){$Z_{q-1}$};
\node[]at(7.4,1)(j){$\cdots$};
\node[]at(7.4,0)(k){$\cdots$};
\node[]at(8.6,1)(l){$Z_q$};
\node[]at(4.6,1)(m){};
\node[]at(5.2,1)(n){};
\path[->,every node/.style={font=\sffamily\small}]
(a) edge node [above] {} (b)
(b) edge node [above] {} (c)
(c) edge [bend left] node [below] {} (d)
(d) edge[bend left] node [below] {} (a)
(c) edge node [above] {} (e)
(e) edge node [above] {} (f)
(f) edge [bend left] node [below] {} (g)
(g) edge[bend left] node [below] {} (c)
(i) edge node [above] {} (j)
(j) edge node [above] {} (l)
(f) edge node [above] {} (m)
(n) edge node [above] {} (i)
(l) edge [bend left] node [below] {} (k)
(k) edge[bend left] node [below] {} (i);
\end{tikzpicture} 
\end{figure}

\noindent each one of the form:
\begin{figure}[h]
\centering
\begin{tikzpicture}
\node[]at(-1,1)(a){$Z_{j-1}$};
\node[]at(1.5,1)(b){$\cdots$};
\node[]at(4,1)(c){$Z_j$};
\node[]at(1.5,-0.25)(d){$\cdots$};
\path[->,every node/.style={font=\sffamily\small}]
(a) edge node [above] {$\tau_{j,1}x_{h_{j,1}}$} (b)
(b) edge node [above] {} (c)
(c) edge[bend left] node [below] {\quad \qquad $\tau_{j,2}x_{h_{j,2}}$} (d)
(d) edge[bend left] node [below]  {} (a);
\end{tikzpicture} 
\end{figure}

\noindent where $x_{h_j,1}$, $x_{h_j,2}$ are the concentrations of species in core subsets belonging to $L_t$ for $t<k$ or are 
equal to $1$. 
Following the proof of Theorem~28 of \cite{aliciaMer},  we have that at steady state:
\[\tau_{j,1}x_{h_{j,1}} z_{j-1} = \tau_{j,2}x_{h_{j,2}} z_j,\]
for each $j=1,\dots,q$. From all these equations, we have that:
\begin{equation*}
x_{i}=z_q=\left(\prod_{j=1}^q \dfrac{ \tau_{j,1}}{\tau_{j,2}}\right) \left(\prod_{j=1}^q \dfrac{x_{h_{j,1}}}{x_{h_{j,2}}}\right)x_{i_{\alpha}}.
\end{equation*}
Using a recursive argument for the variables $x_{h_j,1}$, $x_{h_j,2}$ , we obtain what we wanted.
\end{proof}

In the proof of Theorem~\ref{th:reescalamientos} we will show how to modify the 
rate constants coming out from core complexes. If the digraph $G_E$ has no directed cycles,
 we can consider the sets $L_k$, $k\geq 0$ as in Definition~\ref{def:Lk}. Given $k\geq 1$ and $\alpha\in L_k$, 
 we denote by $\mathcal{Y}_{\alpha}$ the set of reactant\footnote{A reactant complex $y$ 
 is a complex for which exists a reaction $y\to y'$.} core complexes which consist only of 
 one species of $\Sp^{(\alpha)}$ or which consist of one species of $\Sp^{(\alpha)}$ and one species in a core subset
 with index in $L_t$ with $t<k$.

If $G_2$ is weakly reversible, for each $y\in \mathcal{Y}_{\alpha}$, there exist at least one 
simple cycle $C$ in $G_2$ that contains an outgoing edge of the form
$X_i\xrightarrow{\tau x_j}$
if $y=X_i+X_j$ or an edge of the form $X_i\xrightarrow{\tau}$
 if $y=X_i$, where $X_i\in\mathcal{S}^{(\alpha)}$. In this case, we say that the complex 
 $y$ \emph{appears} in the simple cycle $C$. We define the following subsets of  $\mathcal{Y}_{\alpha}$.
 
\begin{defi}\label{def:Mq} Assume $G$ is the underlying digraph of a MESSI system 
satisfying the hypotheses of Theorem~\ref{th:reescalamientos}; in particular,
we fix $X_{i_{\alpha}}\in\Sp^{(\alpha)}$ for each $\alpha=1,\dots,m$. Let $N_q$ and $\mathcal{C}(X_j)$ defined as in the proof of 
Lemma~\ref{lemma:concentrationcore}. For any $k\geq 1$ and $\alpha\in L_k$, we define the following subsets of 
$\mathcal{Y}_{\alpha}$:
\begin{align*}
M_0=&\{y\in \mathcal{Y}_{\alpha}: \text{one species of } y \text{ is } X_{i_{\alpha}}\},\\
M'_0=&\{y\in \mathcal{Y}_{\alpha}: y  \text{ appears in } C \ \text{with } C\in \mathcal{C}(X_{i_{\alpha}})\}\backslash {M}_0,,\ \text{and for } q\geq 1:\\
M_q=&\{y\in \mathcal{Y}_{\alpha}: \text{one species of } y \text{ belongs to } N_q\}\backslash \underset{t=0}{\overset{q-1}{\bigcup}} ({M}_t\cup M'_t),\\
M'_q=&\{y\in \mathcal{Y}_{\alpha}: y  \text{ appears in } C \ \text{with }
 C\in \mathcal{C}(Z), \text{for some } Z\in N_{q}\})\backslash (\underset{t=0}{\overset{q-1}{\bigcup}} ({M}_t\cup M'_t)\cup M_q). 
\end{align*}
\end{defi}

We clarify in our example the previous definitions.

\begin{ej}[Example~\ref{ej:mixedphospho}, continued] Consider the network and its 
MESSI structure of Example~\ref{ej:mixedphospho}. Choose the species $S_0\in\Sp^{(3)}$.
 Looking at the connected component corresponding to $\Sp^{(3)}$ in the digraph $G_2$ in 
 Figure~\ref{fig:ejdigraphsmixedphospho}, the sets $N_q$ that appear in the proof of 
 Lemma~\ref{lemma:concentrationcore} are: $N_0=\{S_0\}$, $N_1=\{S_1, S_2\}$. The set $\mathcal{C}(S_0)$ consists only of the simple cycle: 

\begin{center}
\begin{tikzpicture}
\node[]at(-1,0.8)(a){$S_0$};
\node[]at(0,0.8)(b){$S_1$};
\node[]at(1,0.8)(c){$S_2$};
\path[->,every node/.style={font=\sffamily\footnotesize}]
    (a) edge node [above] {$e\tau_1$} (b)
    (b) edge node [above] {$e\tau_2$} (c)
    (c) edge[bend left] node [below] {$f\tau_3$} (a);
\end{tikzpicture} 
\end{center}
The set $\mathcal{Y}_3$ is $\{S_0+E, S_1 + E, S_2+F\}$. The sets $M_q$ of
 Definition~\ref{def:Mq} are: $M_0=\{S_0+E\}$, $M'_0=\{S_1+E, S_2+F\}$ 
 (the complexes $S_1+E$ and $S_2 + F$ appear in the the previous cycle of $\mathcal{C}(S_0)$).
\end{ej}

Now we are ready to present the proof of Theorem~\ref{th:reescalamientos}.

\begin{proof}[Proof of Theorem \ref{th:reescalamientos}] We can suppose without loss of generality that the coefficient $\gamma_{n+1}$
in the system \eqref{eq:sistemacongamma} is equal to $1$; if not, we divide each equation by 
$\gamma_{n+1}$ and we obtain new values of $\gamma$ for each monomial.
Note that $x_{i_{\alpha}}$ is one of the monomials that appears in the system~\eqref{systemgamma} 
for all $\alpha=1,\dots,m$. We can  suppose that the corresponding multiplier $\gamma_{\alpha}$ of $x_{i_{\alpha}}$ in system~\eqref{systemgamma} is equal to $1$ for all $\alpha$. 
Otherwise, we change the variables
\[\gamma_{\alpha} x_{i_\alpha}=\bar x_{i_{\alpha}}.\]
In this case, we get a system with new values of the vector $\gamma$,
 in which the positive solutions are in bijection with the positive solutions of 
 system~\eqref{eq:sistemacongamma}.

With these assumptions, we assert that we can transform 
system~\eqref{eq:sistemacongamma} 
into system~\eqref{eq:sistemasingamma}, just rescaling the rate constants of reactions coming out 
from a core complex, in a certain order, multiplying each one by an appropriate constant. 
We consider the sets $L_k$, as in Definition~\ref{def:Lk}. Recall that $L_0$ is no empty because $G_E$ 
has no direct cycles. Because the partition is minimal the subsets of core species $\Sp^{(\alpha)}$ are in bijection 
with the connected components of $G_2$ and the set of nodes of the corresponding component equals $\Sp^{(\alpha)}$. 


%


Let $\Sp^{(\alpha)}\in L_0$.  We showed in Lemma~\ref{lemma:concentrationcore} that all the 
core species in $\Sp^{(\alpha)}$ can be written in terms of the monomial $x_{i_{\alpha}}$, reaction
 rate constants and no other variables. If an intermediate complex has its unique core complex reacting to it
via intermediates formed with species only in $\Sp^{(\alpha)}$, 
then the concentration of the corresponding intermediate species also depends only on $x_{i_{\alpha}}$ 
and reaction rate constants. That is, all the concentrations of these species have $x_{i_{\alpha}}$ as the corresponding monomial in the parametrization. 
We supposed that in system~\eqref{eq:sistemacongamma} the monomial $x_{i_{\alpha}}$ is 
multiplied by $\gamma_{\alpha}=1$, then, there is nothing to rescale.
%

Now we proceed recursively. Fix $k\geq 1$. Suppose that we have already rescaled properly the reaction rate constants of edges coming out from core complexes whose parametrizations 
depends only on variables $x_{i_{\beta}}$ with $\beta\in L_t$, with $t<k$. Fix one core subset $\Sp^{(\alpha)}$, with $\alpha\in L_k$. 
We will show how to rescale the rate constants of reactions coming out from complexes in the set $\mathcal{Y}_{\alpha}$, defined above.

The digraph $G_2$ is weakly reversible, then we can consider the sets $M_q, M'_q$, $q\geq 0$, as in Definition~\ref{def:Mq}.
We are going to rescale the rate constants of reactions coming out from a complex in $M_0$, then in $M'_0$, then in $M_1$ and so on, in that order.
First, we show how to modify the constants of reactions coming out from a complex in $M_0$. 
Because the system is $s$-toric, each intermediate complex has a unique core complex reacting through
 intermediates to it. We consider the intermediates complexes such the unique core complex reacting through 
 intermediates to it is in $M_0$ or in $M'_0$ (if there is no one, we don't rescale anything). Suppose then 
 that there is one intermediate complex formed by an intermediate species $U_{\ell}$ such that $y \uri U_{\ell}$, with $y\in M_0$ or $y\in M'_0$.
If the core complex $y\in M_0$, then $y=X_{i_{\alpha}}$ or $y=X_{i_{\alpha}}+X_j$,  
with $X_j$ in a core subset belonging to $L_t$, with $t<k$. If $y=X_i$, the concentration of $U_k$ is 
$u_{\ell}=\mu_{\ell}(\kappa)x_{i_{\alpha}}$, with $\mu_{\ell}$ as in \eqref{eq:mu},
and we are assuming that the monomial $x_{i_{\alpha}}$ is  multiplied by $\gamma_{\alpha}=1$. If $y=X_{i_{\alpha}}+X_j$, then we can write:
\[u_{\ell}=\mu_{\ell}(\kappa)x_{i_{\alpha}}x_j.\]
Now, $x_j$ is a concentration of a core species and its parametrization can be written
in terms of cores species of 
 subsets in the partition with indices in $L_t$,
with $t<k$ and reaction rate constants $\tau'(\kappa)$, with $\tau'(\kappa)$
 labels of edges of connected components of $G_2$, corresponding to core subsets with indices in $L_t$,
  with $t<k$. We write $x_j=g({\tau'(\kappa)})\underline{x}^{a}$, with $\underline{x}^{a}$ a
   monomial in these other species and $g$ a rational function, and we get:
\[u_{\ell}=\mu_{\ell}(\kappa) g({\tau'(\kappa)}) \underline{x}^{a} x_{i_{\alpha}}.\]
Suppose that the monomial $\underline{x}^{a}x_{i_{\alpha}}$ appears in system~\eqref{eq:sistemacongamma} 
multiplied by $\gamma$. Then, we want new reaction rate constants $\bar \kappa$ such that:
\begin{equation}\label{eq:mu}
\gamma \mu_{\ell}(\kappa) g({\tau'(\kappa)})= \mu_{\ell}(\bar \kappa) g({{\tau'(\bar \kappa)}}).
\end{equation}
To ease the notation, we denote $\mu_{\ell}=\mu_{\ell}(\kappa)$, $\tau'=\tau'(\kappa)$, 
$\bar \mu_{\ell}=\mu_{\ell}(\bar \kappa)$, $\bar{{\tau}}'=\tau'(\bar \kappa)$ (and we will denote with a bar the constants 
depending on $\bar \kappa$ and without a bar, the
 constants depending on $\kappa$). The constants ${\tau}'$ have been modified previously by hypothesis (note that a constant $\tau$ can only appear in one edge of $G_2$, 
because of the condition that $G_E$ has no cycles) and replaced by the constants $\bar{{\tau}}'$. It is clear that we can do the rescaling: it is enough to
multiply each reaction constant of reactions coming out from the core complex $y$ by the constant:
$\gamma\frac{g({\tau'})}{g(\bar{{\tau}}')}.$
Then, by Lemma~\ref{lemma:multiplykappa}, we obtain the equality \eqref{eq:mu}. 
Now, if $y\in M'_0$, $y$ appears in $C$ with $C\in \mathcal{C}(X_{i_{\alpha}})$. Then, $y=X_i$ or $y=X_i+X_{j'}$, with $C$ of the form
\begin{figure}[h]
\centering
\begin{tikzpicture}
\node[]at(-1,1)(a){$X_{i_{\alpha}}$};
\node[]at(1,1)(b){$\cdots$};
\node[]at(3,1)(c){$X_i$};
\node[]at(1,0)(d){$\cdots$};
\path[->,every node/.style={font=\sffamily\small}]
(a) edge node [above] {$\tau_{1}x_j$} (b)
(b) edge node [above] {} (c)
(c) edge[bend left] node [below] {\quad \qquad $\tau_{2}x_{j'}$} (d)
(d) edge[bend left] node [below]  {} (a);
\end{tikzpicture} 
\end{figure}

\noindent where $x_{j'}$ is the concentration of $X_{j'}$ or is equal to $1$ (if $y=X_i$), and similarly for $x_j$. Then, we have at steady state:
\[u_{\ell}=\mu_{\ell}\, x_i\, x_{j'}=\mu_{\ell}\frac{\tau_{1}}{\tau_{2}}\, x_{i_{\alpha}}\, x_j\]
That is, $u_{\ell}$ depends on the concentrations of the species of the complex $X_{i_{\alpha}} + X_j$, 
which belongs to $M_0$. We then modify the reaction rate constants coming out of $X_{i_{\alpha}} + X_j$ 
multiplying it  by an appropiate constant in a similar way as we did in the previous case, looking at the value of $\gamma$ 
that appears in the corresponding monomial (note that if we modified these constants before, the previous 
rescaling also works for this case). Note that when later we modify the constants of the complex $y=X_i+X_{j'}$ 
which belongs to $M'_0$ (we will see how to do this), the rescaling will be coherent. That is, if we multiply the 
constants of each reaction coming out from $X_{i_{\alpha}} + X_j$ by $\nu_1$, and the constants coming out 
from $y$ by $\nu_2$ the rescaling will be coherent if we have:
\[{\bar \mu_{\ell}}\, \frac{\bar \tau_{1}}{\bar \tau_{2}}=\nu_1\,  { \mu_{\ell}}\, \frac{\tau_{1}}{ \tau_{2}},\]
but this holds by Lemma~\ref{lemma:multiplykappa}:
\[{\bar \mu_{\ell}}\, \frac{\bar \tau_{1}}{\bar \tau_{2}}={\nu_2 \, \mu_{\ell}}\,\frac{\nu_1 \, \tau_{1}}{\nu_2\, \tau_{2}}=\nu_1\,  { \mu_{\ell}}\, \frac{\tau_{1}}{ \tau_{2}},\]
where $\bar \mu_{\ell}$, $\bar \tau_{1}$, $\bar \tau_{2}$ denotes the values of the functions $\mu_{\ell}$, 
$\tau_{1}$, $\tau_{2}$ corresponding to the new constants $\bar \kappa$.
We modify all the reactions rate constants coming out of complexes $y$ belonging to $M_0$ in this way:
looking at intermediates complexes $U_{\ell}$ such that $y\uri  U_{\ell}$ or $y'\uri  U_{\ell}$, with $y'\in M'_0$ 
and such that in the parametrization of the intermediate species appears the monomial corresponding to the complex $y$. 
If there is no such intermediate complex we multiply the constants by $1$.
Also, we observe that with this rescaling, we modified all the constants $\tau$ that label an edge in $G_2$ 
of the form $X_{i_{\alpha}}\xrightarrow{\tau x_j}$. 

Now, we show how to rescale the constants of complexes in $M'_0$. Let $y\in M'_0$, then $y=X_i$ or 
$y=X_i+X_{j'}$ and we have a cycle as we showed previously in this proof. By Lemma~\ref{lemma:concentrationcore}, the concentration $x_i$ is of the form
\[x_i=\frac{\tau_{1}}{\tau_{2}}g({\tau'})\underline{x}^{a}x_{i_{\alpha}},\]
where $\tau'$ are labels of edges of 
connected components of $G_2$, corresponding to core subsets belong to $L_t$, with $t<k$, $g({\tau})$ a 
rational form and $\underline{x}^{a}$ a monomial in variables in core subsets belong to $L_t$, with $t<k$.
The constant $\tau_1$ and the constants ${\tau'}$ have been already modified by the constants ${\bar{\tau_1}}$ 
and ${\bar{\tau}'}$ respectively.  
It is clear that we can do the rescaling if we modify $\tau_2$. If $\gamma$ is the constant that multiplies the
monomial $\underline{x}^{a}x_{i_{\alpha}}$ in 
system~\eqref{eq:sistemacongamma}, we want 
\[\gamma\frac{\tau_{1}}{\tau_{2}}g({\tau'})=\frac{\bar{\tau_{1}}}{{\bar \tau_{2}}}g({\bar{\tau}'}),\]
and we get this equality if we multiply each rate constant of  a reaction coming out from $y$ by the constant
$\frac{\bar{\tau_1}g(\bar{{\tau}}')}{\gamma\tau_1g({\tau'})}$ and we apply Lemma~\ref{lemma:multiplykappa}.
We do this for all complexes in $M'_0$.

We proceed recursively rescaling the remaining constants of reactions coming out from complexes in $M_q$, 
and then from $M'_q$, for each $q$. We first modify the constants of complexes in $M_q$, by looking at the 
parametrization of intermediate species as we did when we showed how to rescale the constants of reactions 
coming out from complexes in $M_0$. After that, we modify the constants of complexes in $M'_q$ by looking 
the concentration of the core species in $\Sp^{(\alpha)}$ that appear in the complex, as we did for the complexes in $M'_0$. 
Then, we can rescale all the complexes of $\mathcal{Y}_{\alpha}$, for all $\alpha\in L_k$. We can proceed for all $k$, $k\geq 1$, in order, and then we are done.

\end{proof}

\begin{ej}  
	The distributive multisite phosphorylation systems showed in Section~\ref{sec:3} are all in the hypotheses of Theorem~\ref{th:reescalamientos}. 
	A MESSI structure of the network for the double phosporylation ($n=2$) is given by this minimal partition of the species:

$\Sp^{(0)}=\{ES_0, ES_1, FS_1, FS_2\}$  (the intermediate species), $\Sp^{(1)}=\{E\}$, $\Sp^{(2)}=\{F\}$ and $\Sp^{(3)}=\{S_0, S_1, S_2\}$. 

	The digraphs $G_1$, $G_2$ and $G_E$ are depicted in Figure \ref{fig:digraphs2phospho}. 
	It is easy to check the conditions of Theorem~\ref{th:reescalamientos} in this case. 
	Following the proof of this theorem, we can show which parameters are sufficient to rescale. 
	For this case is sufficient to modify $k_{\rm{on}_0}$, $k_{\rm{on}_1}$,
	$\ell_{\rm{on}_0}$ and $\ell_{\rm{on}_1}$, the rate constants of reactions coming out of core complexes.
	
	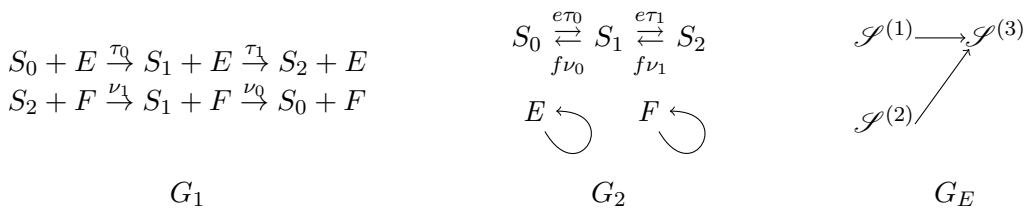
\begin{figure}
		\centering
		\begin{tikzpicture}
		\node[]at(0,0)(a){$\begin{array}{l}
			S_0 + E  \overset{\tau_0}{\rightarrow} S_1 + E \overset{\tau_1}{\rightarrow}
			S_2 + E\\
			S_2 + F  \overset{\nu_1}{\rightarrow} S_1 + F \overset{\nu_0}{\rightarrow}
			S_0 + F
			\end{array}$};
		\node[]at(0,-1.5){$G_1$};
		\end{tikzpicture} 
		\hspace{1.0cm}
		\begin{tikzpicture}
		\node[]at(0,0.5)(a){$\begin{array}{l}
			S_0 \underset{f\nu_0}{\overset{e\tau_0}{\rightleftarrows}} S_1 \underset{f\nu_1}{\overset{e\tau_1}{\rightleftarrows}} S_2
			\end{array}$};
		\node[]at(-1,-0.4)(b){E};
		\node[]at(0.5,-0.4)(c){F};
		\path
		(b)   edge[in=0,out=300,loop] node  {} (b);
		\path
		(c)   edge[in=0,out=300,loop]node  {} (c);
		\node[]at(0,-1.5){$G_2$};
		\end{tikzpicture} 
		\hspace{1.0cm}
		\begin{tikzpicture}[ampersand replacement=\&] 
		\matrix (m) [matrix of math nodes, row sep=1.5em, column sep=1em, text height=1.5ex, text depth=0.25ex]
		{ \Sp^{(1)}\&  \Sp^{(3)}  \\
			\Sp^{(2)} \&  \& \\};
		\draw[->]($(m-1-1)+(0.4,0)$) to node[below] (x) {} ($(m-1-2)+(-0.4,0)$);
		\draw[->]($(m-2-1)+(0.4,0)$) to node[below] (x) {} ($(m-1-2)+(-0.35,-0.15)$);
		\node[]at(0,-1.5){$G_E$};
		\end{tikzpicture}
		\caption{The digraphs $G_1$, $G_2$ and $G_E$ of the double phosphorylation.}
		\label{fig:digraphs2phospho}
	\end{figure}

\end{ej}

  \section*{Acknowledgment} The authors are grateful to the Kurt and Alice Wallenberg Foundation and to the Institut 
Mittag-Leffler, Sweden, for their
 support to start and to make progress on this work. Our thanks go also to the Mathematics Department of the Royal Institute of 
Technology, Sweden,  for the wonderful hospitality we enjoyed. AD and MG are partially supported by UBACYT 
20020100100242, CONICET PIP 11220150100473, and ANPCyT PICT 2013-1110, Argentina.

    \end{document}